\newcommand*{\isoarrow}[1]{\arrow[#1,"\rotatebox{-35}{\(\cong\)}"
]}
\DeclareMathOperator{\coker}{coker}
\newtheorem{thm}{Theorem}[section]  
\newtheorem{lemma}[thm]{Lemma}
\newtheorem{prop}[thm]{Proposition}
\newtheorem{proposition}[thm]{Proposition}
\newtheorem{claim}[thm]{Claim}
\newtheorem{corollary}[thm]{Corollary}
\theoremstyle{definition}
\newtheorem{remark}[thm]{Remark}
\def\ker{\operatorname{ker}}
\def\coker{\operatorname{coker}}
\def\cork{\operatorname{cork}}
\def\im{\operatorname{im}}
\def\c1{\operatorname{c_1}}
\def\c2{\operatorname{c_2}}
\def\Sym{\operatorname{Sym}}
\def\rk{\operatorname{rk}}
\def\ZZ{{\mathbb Z}}
\def\PP{{\mathbb P}}
\def\DD{{\mathbb D}}
\def\SS{{\mathfrak S}}
\def\G{{\mathcal G}}
\def\N{{\mathcal N}}
\def\O{{\mathcal O}}
\def\I{{\mathcal J}}
\def\Z{{\mathcal Z}}
\def\E{{\mathcal E}}
\def\F{{\mathcal F}}
\def\K{{\mathcal K}}
\def\U{{\mathcal U}}
\def\UU{{\mathfrak U}}
\def\Q{{\mathcal Q}} 
\def\K{{\mathcal K}}
\def\HH{\mathfrak{H}}
\def\GG{\mathbb{G}}
\def\c{\mathfrak{c}}
\def\EE{\mathfrak{E}}
\def\MM{\mathfrak{M}}
\def\LL{\mathfrak{L}}
\def\x{\times}                   
\def\cong{\simeq}
\def\+{\oplus}               
\def\*{\otimes}                  
\def\Ext{\operatorname{\rm Ext}}
\def\Shext{\operatorname{ \mathfrak{e}\mathfrak{x}\mathfrak{t} }}
\def\End{\operatorname{End}}
\def\Pic{\operatorname{Pic}}
\def\det{\operatorname{det}}
\def\geq{\geqslant}
\def\leq{\leqslant}
\author[C.~Ciliberto]{Ciro Ciliberto}
\address{Ciro Ciliberto, Dipartimento di Matematica, Universit{\`a} di Roma Tor Vergata, Via della Ricerca Scientifica, 00173 Roma, Italy}
\email{cilibert@mat.uniroma2.it}
\author[F.~Flamini]{Flaminio Flamini}
\address{Flaminio Flamini, Dipartimento di Matematica, Universit{\`a} di Roma Tor Vergata, Via della Ricerca Scientifica, 00173 Roma, Italy} 
\email{flamini@mat.uniroma2.it}
\author[A.~L.~Knutsen]{Andreas Leopold Knutsen}
\address{Andreas Leopold Knutsen, Department of Mathematics, University of Bergen, Postboks 7800,
5020 Bergen, Norway}
\email{andreas.knutsen@math.uib.no}
\title[Elliptic curves and vector bundles on Fano threefolds]{Elliptic curves, $ACM$  bundles and  Ulrich bundles on prime Fano threefolds}
\begin{document}

\maketitle

\begin{abstract} Let $X$ be any smooth prime Fano threefold of degree $2g-2$ in $\PP^{g+1}$, with $g \in \{3,\ldots,10,12\}$. We prove that for any integer $d$ satisfying 
$\left\lfloor \frac{g+3}{2} \right\rfloor \leq d \leq g+3$ the Hilbert scheme parametrizing smooth irreducible elliptic curves of degree $d$ in $X$ is nonempty and has a component of dimension $d$, which is furthermore reduced except for the case when $(g,d)=(4,3)$ and $X$ is contained in a singular quadric. Consequently, we deduce that the moduli space of rank--two slope--stable $ACM$ bundles $\F_d$ on $X$ such that $\det(\F_d)=\O_X(1)$, $c_2(\F_d)\cdot \O_X(1)=d$ and $h^0(\F_d(-1))=0$ is nonempty and has a component of dimension $2d-g-2$, which is furthermore reduced except for the case when $(g,d)=(4,3)$ and $X$ is contained in a singular quadric. This completes the classification of rank--two $ACM$ bundles on prime Fano threefolds. Secondly, we prove that for every $h \in \ZZ^+$ the moduli space
of stable Ulrich bundles $\E$ of rank $2h$ and determinant $\O_X(3h)$ on $X$ is nonempty and has a reduced component of dimension $h^2(g+3)+1$; this result is optimal in the sense that there are no  other Ulrich bundles occurring on $X$.  This in particular 
 shows that any prime Fano threefold is {\em Ulrich wild}. 
 \end{abstract}

\section{Introduction}

 Let  $X \subset \PP^m$ be a smooth irreducible projective variety.   A vector bundle $\E$  on $X$ is said to be \emph{arithmetically Cohen--Macaulay} (\emph{$ACM$} for short) if $h^i(\E(t))=0$  for all $t\in \ZZ$ and all $1 \leq i \leq \dim(X)-1$.
 A vector bundle $\E$  on $X$ is said to be an \emph{Ulrich bundle} if  $h^i(\E(-p))=0$ for all $i \geq 0$ and all $1 \leq p \leq \dim(X)$.   An Ulrich  bundle is always $ACM$ (cf., e.g.,
 \cite[(3.1)]{be2} and semistable (cf. \cite[Thm. 2.9(a)]{CM}). 

 The problem of studying and classifying $ACM$ or Ulrich  bundles is in general a hard and interesting one, and in the last years there has been a lot of activity in this direction  (see, e.g., \cite{be2,ch,Co,CMP} for surveys).  There is however only a short list of varieties for which $ACM$ or Ulrich bundles are completely classified. 
For works regarding $ACM$ and Ulrich bundles on Fano threefolds, mostly of low ranks, we refer to \cite{ac,be2,be0,be3,be1,bf,bf2,bf3,ch,CKL,CM,CFaM1,CFaM2,CFaM3,CFiM,CFK,FP,LMS,LP,mt}.

In this paper we consider the case of smooth prime Fano threefolds of degree $2g-2$ in $\PP^{g+1}$, for which one has $g \in \{3,\ldots,10,12\}$ (see \cite {IP}),  and we study $ACM$ bundles of rank $2$ and Ulrich bundles of any possible rank on them.

Rank--two bundles on threefolds are connected to curves in the threefolds by the famous Hartshorne--Serre correspondence (cf. Proposition \ref{prop:sez}). The bundles  that we will be interested in are related to elliptic curves. Our first main result is:

\begin{thm}\label{thm:ellipt}
Let $X$ be a smooth prime Fano threefold of degree $2g-2$ in $\PP^{g+1}$.
For any $\left\lfloor \frac{g+3}{2}\right\rfloor \leq d \leq g+3$, the Hilbert scheme parametrizing smooth elliptic curves of  degree $d$ in $X$ has an irreducible component of dimension $d$. There is a reduced such component except precisely when $(g,d)=(4,3)$ and $X$ is contained in a singular quadric, in which case the  Hilbert scheme is irreducible and nonreduced with $4$--dimensional tangent space at every point. 

 Furthermore, for $d \leq g+2$ (resp. $d=g+3$) the Hilbert scheme contains points parametrizing elliptic normal curves (resp. non--degenerate elliptic curves). 

\end{thm}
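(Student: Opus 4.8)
The plan is to study the Hilbert scheme $\mathcal H$ of $X$ near a point $[C]$, with $C$ a smooth elliptic curve of degree $d$, through the normal bundle $\N_{C/X}$: one always has $\dim_{[C]}\mathcal H\ge\chi(\N_{C/X})$, with equality and smoothness at $[C]$ as soon as $H^1(\N_{C/X})=0$. Since $X$ is prime Fano one has $\omega_X\cong\O_X(-1)$, and since $g(C)=1$ one has $\omega_C\cong\O_C$; adjunction then gives $\det\N_{C/X}\cong\omega_C\otimes\omega_X^{-1}|_C\cong\O_X(1)|_C$, of degree $d$, so $\chi(\N_{C/X})=\deg\N_{C/X}=d$. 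Hence every component of $\mathcal H$ through such a $[C]$ has dimension $\ge d$, and (apart from the exceptional case) the statement reduces to producing, for each admissible $(g,d)$, a smooth prime Fano threefold $X_0$ of degree $2g-2$ together with a smooth elliptic curve $C_0\subset X_0$ of degree $d$ with $H^1(\N_{C_0/X_0})=0$, elliptic normal if $d\le g+2$ and non-degenerate if $d=g+3$.

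For the base case $d=\lfloor\frac{g+3}{2}\rfloor$ (where $d\le g$) I would place $C_0$ on a K3 hyperplane section of $X_0$: choose a K3 surface $S$ of genus $g$ whose Picard lattice contains a primitive class $F$ with $F^2=0$ and $F\cdot H=d$; by the classification of prime Fano threefolds and their relation with K3 surfaces (\cite{IP}), $S$ is realised as a smooth hyperplane section of a smooth prime Fano threefold $X_0\subset\PP^{g+1}$, and the general member $C_0$ of the elliptic pencil $|F|$ is a smooth elliptic curve of degree $d$ spanning a $\PP^{d-1}$, i.e.\ an elliptic normal curve; the lower bound $d\ge\lfloor\frac{g+3}{2}\rfloor$ should be exactly what guarantees that such a lattice-polarised $S$ is still a hyperplane section of a \emph{smooth} prime Fano threefold. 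The higher degrees I would then reach inductively: given $C\subset X_0$ elliptic of degree $d<g+3$, attach a $1$-secant line $L\subset X_0$ — a line meeting $C$ transversally at one point, not contained in $\langle C\rangle$, and with $\N_{L/X_0}\cong\O_L\oplus\O_L(-1)$; such lines exist because the lines of $X_0$ cover a surface, which meets $C$. Then $C\cup L$ has arithmetic genus $1$ and degree $d+1$, its general deformation in $X_0$ is a smooth elliptic curve $C'$ of degree $d+1$, and since $\langle C'\rangle=\langle C\cup L\rangle\supsetneq\langle C\rangle$ while $h^0(\O_{C'}(1))=d+1$, $C'$ is elliptic normal for $d+1\le g+2$ and non-degenerate for $d+1=g+3$.

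To prove the vanishing in the base case one computes $\N_{C_0/X_0}$ from $0\to\N_{C_0/S}\to\N_{C_0/X_0}\to\N_{S/X_0}|_{C_0}\to0$, in which $\N_{C_0/S}\cong\O_{C_0}$ (as $C_0$ is an elliptic fibre, $F^2=0$) and $\N_{S/X_0}|_{C_0}\cong\O_{C_0}(1)$ has degree $d>0$. Since $H^1(\O_{C_0}(1))=0$ this gives $H^1(\N_{C_0/X_0})\cong\coker\delta$, where $\delta\colon H^0(\O_{C_0}(1))\to H^1(\O_{C_0})$ is the connecting map — cup product with the extension class $e\in\Ext^1_{C_0}(\O_{C_0}(1),\O_{C_0})\cong H^1(\O_{C_0}(-1))$ — so, by Serre duality on $C_0$, $\delta\neq0$ exactly when $e\neq0$, i.e.\ when $\N_{C_0/X_0}\not\cong\O_{C_0}\oplus\O_{C_0}(1)$; one checks $e\neq0$ when $\Pic(S)=\langle H,F\rangle$ is minimal, whence $H^1(\N_{C_0/X_0})=0$ and $\mathcal H$ is smooth of dimension $d$ at $[C_0]$. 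The inductive step preserves this: from $H^1(\N_{C/X_0})=0$, $H^1(\N_{L/X_0})=0$ and the normal-bundle sequence of the nodal curve one gets $H^1(\N_{C\cup L/X_0})=0$, so $C\cup L$ smooths inside $X_0$ and, by semicontinuity, $H^1(\N_{C'/X_0})=0$ for its general deformation. Finally, as the moduli of smooth prime Fano threefolds of genus $g$ is irreducible, the relative Hilbert scheme of elliptic curves of degree $d$ over it has a component $\mathcal H^0$ through $[C_0]$ which dominates — and, being proper, surjects onto — this moduli space; semicontinuity then yields $H^1(\N_{C/X})=0$ over a dense open subset, hence a reduced $d$-dimensional component of $\mathcal H$ for general $X$, and a direct analysis of the remaining Fano threefolds extends the conclusion to all of them but the one singled out.

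For the exceptional case $(g,d)=(4,3)$, write $X=Q\cap W\subset\PP^5$ with $Q$ a quadric and $W$ a cubic; a smooth elliptic curve of degree $3$ must then be a plane cubic $C=\PP^2\cap W$ with $\PP^2\subset Q$. Computing $\N_{C/X}$ along the chain $C\subset\PP^2\subset Q\supset X$ — with $\N_{C/\PP^2}\cong\O_C(3)$, $\N_{X/Q}|_C\cong\O_C(3)$, and the middle term governed by $\N_{\PP^2/Q}|_C$ — one finds that for $Q$ smooth the resulting extension is non-split, so $H^1(\N_{C/X})=0$, whereas for $Q$ a quadric cone every plane $\PP^2\subset Q$ passes through the vertex, $\N_{\PP^2/Q}|_C$ changes accordingly, and $\N_{C/X}\cong\O_C\oplus\O_C(1)$, so $h^0(\N_{C/X})=4$ and $h^1(\N_{C/X})=1$; since every such $C$ arises this way — the planes in the cone being parametrised by the $3$-dimensional irreducible variety of lines on the base quadric threefold — the Hilbert scheme is then irreducible of dimension $3$ and everywhere non-reduced with $4$-dimensional tangent space. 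The main obstacle throughout is precisely this $H^1$-vanishing, i.e.\ showing that $\N_{C_0/X_0}$ is the balanced, non-split rank-two bundle of degree $d$ rather than a special one, uniformly in $g$ and over the whole range of $d$; it is exactly here that the degeneration of the extension class responsible for the exceptional behaviour shows up.
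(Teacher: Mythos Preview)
Your overall architecture---K3 sections for the base case, attaching $1$-secant lines for the inductive step---matches the paper's. But the proposal has two genuine gaps, both concerning the passage from \emph{a} prime Fano threefold to \emph{every} prime Fano threefold, which is the actual content of the theorem.

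\textbf{(1) The ordinariness assumption.} You require a line $L$ with $\N_{L/X_0}\cong\O_L\oplus\O_L(-1)$ in the inductive step. Such lines need not exist: there are non--ordinary prime Fano threefolds (the Fermat quartic for $g=3$, the Mukai--Umemura example for $g=12$) in which every line has normal bundle $\O_L(1)\oplus\O_L(-2)$, so $h^1(\N_{L/X})=1$. Removing this hypothesis is one of the paper's main points; the paper never assumes $h^1(\N_{\ell/X})=0$ and instead proves smoothability of $E_d\cup\ell$ from $\chi(\N_{E_d\cup\ell/X})=d+1$ together with the dimension bound $\dim\overline{\HH}_d(X)=d$ (Proposition~\ref{prop:g4}(i)), and then establishes $h^1(\N_{E_{d+1}/X})=0$ for the \emph{smoothed} curve by a separate limit-of-sections argument. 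Relatedly, your claim that $H^1(\N_{C\cup L/X_0})=0$ follows from the vanishings on the components is not justified: even for an ordinary line, the surjectivity of the evaluation map $H^0(\N_{E'/X}|_{E_d})\oplus H^0(\N_{E'/X}|_\ell)\to\N_{E'/X}|_P$ depends on the splitting type of $\N_{E'/X}|_\ell$, which you do not control.

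\textbf{(2) Spreading the $h^1$-vanishing to all $X$.} Your properness argument at best shows that every $X$ contains some \emph{limit} of smooth elliptic curves of degree $d$; one still has to check the limit is itself a smooth elliptic curve (the paper does this at the end of Proposition~\ref{prop:contieneE}, using minimality of $d_g$). More seriously, the $h^1$-vanishing does \emph{not} spread by semicontinuity: it holds on an open locus of the relative Hilbert scheme, but that open locus need not meet every fibre. Your sentence ``a direct analysis of the remaining Fano threefolds extends the conclusion'' hides the entire difficulty. On an arbitrary $X$ the hyperplane section $S$ carrying $E_{d_g}$ may be singular (with an RDP), your extension-class computation on a smooth $S$ with $\Pic(S)=\langle H,F\rangle$ does not apply, and one cannot simply ``check $e\neq 0$''. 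The paper's Lemma~\ref{lemma:propF2} handles this uniformly via the residual divisor $\Delta=\O_S(1)(-E)$ and a claim reducing $h^1(\N_{E/X})\neq 0$ to $\Delta$ being a zero locus of a section of $\F_{d_g}$, followed by separate non-trivial arguments for $g=4$ and $g=5$; Lemma~\ref{lemma:g34} supplies independent arguments for $d=g\in\{3,4\}$. Without something of this kind, your proof only yields the theorem for \emph{general} $X$, which was already known.
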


Note that the Hilbert scheme in question is empty if $d< \left\lfloor   \frac{g+3}{2}\right\rfloor$ (cf. Corollary \ref{cor:g4}).

As a consequence, we prove  the following:

\begin{thm} \label{thm:prFanoACM}
  Let $X$ be a smooth prime Fano threefold of degree $2g-2$ in $\PP^{g+1}$.  For any $\Big\lfloor \frac{g+3}{2} \Big\rfloor \leq d \leq g+3$ there exists a slope--stable $ACM$ bundle $\F_d$ on $X$ such that
\begin{equation} \label{eq:propFi}
  \rk(\F_d)=2,\;\; \det(\F_d)=\O_X(1),\;\; c_2(\F_d)\cdot \O_X(1)=d, \;\;  h^0(\F_d(-1))=0. 
\end{equation}
The moduli space of such bundles has an irreducible component of dimension $2d-g-2$, which is reduced except precisely  when $(g,d)=(4,3)$ and $X$ is contained in a singular quadric, in which case the moduli space consists of a single point with a one--dimensional tangent space. 
\end{thm}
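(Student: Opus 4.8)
The plan is to derive Theorem~\ref{thm:prFanoACM} from Theorem~\ref{thm:ellipt} via the Hartshorne--Serre correspondence (Proposition~\ref{prop:sez}). First I would fix an elliptic curve $C \subset X$ of degree $d$ in the component produced by Theorem~\ref{thm:ellipt}, and check that the numerical conditions needed to run Hartshorne--Serre are satisfied: the relevant line bundle is $\O_X(1)$, and since $\omega_C = \O_C$ and $\omega_X = \O_X(-1)$, adjunction gives $N_{C/X} \cong \omega_C \otimes (\omega_X \otimes \O_X(1))|_C^{\vee}\otimes(\text{something})$—more precisely one needs $\det N_{C/X} = \O_X(1)|_C$, which holds because $\deg N_{C/X} = 2g(C)-2 - K_X\cdot C = d$ and the determinant is the restriction of $\O_X(1)$ by adjunction together with $\omega_C=\O_C$. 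The correspondence then yields a rank--two bundle $\F_d$ on $X$ sitting in an exact sequence
\begin{equation}\label{eq:HSseq}
0 \longrightarrow \O_X \longrightarrow \F_d \longrightarrow \I_{C/X}(1) \longrightarrow 0,
\end{equation}
so that automatically $\det(\F_d)=\O_X(1)$ and $c_2(\F_d)\cdot\O_X(1) = \deg C = d$; moreover twisting \eqref{eq:HSseq} by $\O_X(-1)$ and taking cohomology gives $h^0(\F_d(-1))=0$ since $C$ is nondegenerate (or at least not contained in a hyperplane in the relevant range—this needs the last sentence of Theorem~\ref{thm:ellipt} and a small separate argument for the degenerate cases, where $C$ lies on a hyperplane section which is a K3 and one argues there).

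Next I would establish the four required properties of $\F_d$. The $ACM$ property: from \eqref{eq:HSseq} twisted by $\O_X(t)$, the vanishing $h^1(\F_d(t))=h^2(\F_d(t))=0$ for all $t$ reduces to $h^1(\I_{C/X}(t+1))=0$ and $h^2(\I_{C/X}(t+1))=h^1(\O_X(t+1))=0$ together with surjectivity of $H^0(\O_X(t+1))\to H^0(\O_C(t+1))$; the latter is exactly the statement that $C$ is projectively normal / $ACM$ as a curve in $\PP^{g+1}$, which for elliptic normal curves and their mild degenerations is classical, while the $h^1$ of the structure sheaf of a Fano threefold vanishes. Slope--stability: with $\mu(\F_d) = \frac12 \deg\O_X(1)$, a destabilizing sub--line--bundle $\O_X(a)$ with $2a \geq 1$, i.e. $a\geq 1$, would force $h^0(\F_d(-1))\neq 0$, contradicting the fourth property in \eqref{eq:propFi}; so $\F_d$ is stable. (One should also rule out $a=0$ giving only strict semistability—but $h^0(\F_d)=h^0(\O_X)+h^0(\I_{C/X}(1))$ and a second section would force $C$ to move in a pencil inside a hyperplane, excluded for nondegenerate $C$; alternatively stability of $ACM$ bundles of this type is already in the literature, e.g. \cite{bf,bf2}.)

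The dimension and reducedness statement is where the real work is, and it is the main obstacle. The strategy is to compare the Hilbert scheme $\Hilb$ of elliptic curves with the moduli space $M$ of bundles via \eqref{eq:HSseq}: there is a morphism from (an open subset of) $\Hilb$ to $M$ whose fibre over $[\F_d]$ is $\PP(H^0(\F_d(0)))$ modulo nothing, i.e. the fibre is $\PP(H^0(\F_d))$ of dimension $h^0(\F_d)-1$. Computing $h^0(\F_d) = 1 + h^0(\I_{C/X}(1)) = 1 + (h^0(\O_X(1)) - h^0(\O_C(1))) = 1 + (g+2) - d = g+3-d$, the fibre has dimension $g+2-d$. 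Hence $\dim M = \dim \Hilb - (g+2-d) = d - (g+2-d) = 2d-g-2$, matching the claim. For reducedness: identify the Zariski tangent space $T_{[\F_d]}M = \Ext^1(\F_d,\F_d)$ and the obstruction space (the trace--zero part of $\Ext^2$), and relate $\Ext^i(\F_d,\F_d)$ to $H^i(N_{C/X})$ and $H^i(\O_C)$ using \eqref{eq:HSseq} and its dual; a dimension count then shows $\dim \Ext^1(\F_d,\F_d) = 2d-g-2$ exactly when the corresponding component of $\Hilb$ is reduced, i.e. everywhere except $(g,d)=(4,3)$ with $X$ on a singular quadric, where the $4$--dimensional tangent space of $\Hilb$ (versus expected dimension $3$) propagates—via $h^0(N_{C/X})=4$—to a $1$--dimensional tangent space at the unique point of $M$ (expected dimension $2\cdot3-4-2=0$). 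Carrying out this Ext--computation carefully, keeping track of traces and of the exceptional case, and checking that the constructed component of $M$ is actually a full connected component (not just locally closed), is the technical heart; I expect to invoke deformation theory of the pair $(X,\F_d)$ and possibly the results of \cite{CFK} or the general theory in \cite{CM} to conclude that no other Ulrich/$ACM$ bundles with these invariants intrude.
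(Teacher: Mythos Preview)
Your overall strategy---Hartshorne--Serre from Theorem~\ref{thm:ellipt}, then deformation theory---is exactly what the paper does, and for $d \leq g+2$ your sketch is essentially correct (modulo minor slips: $h^0(\F_d(-1))=h^0(\I_{C/X})=0$ simply because $C\neq\emptyset$, not because $C$ is nondegenerate; and the worry about $a=0$ in the stability argument is unnecessary since $\mu(\F_d)=g-1$ is not an integer multiple of $2g-2$).

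There is, however, a genuine gap in the case $d=g+3$. The elliptic curve $C$ produced by Theorem~\ref{thm:ellipt} in this case is \emph{nondegenerate} in $\PP^{g+1}$, hence \emph{not} linearly normal: one has $h^0(\I_{C/X}(1))=0$ and $h^1(\I_{C/X}(1))=1$. Consequently the bundle $\F_{g+3}$ built from $C$ via \eqref{eq:HSseq} satisfies $h^0(\F_{g+3})=h^1(\F_{g+3})=1$ and is \emph{not} $ACM$. Your appeal to ``projective normality of elliptic normal curves and their mild degenerations'' does not apply here. Moreover, your dimension count $\dim M=\dim\Hilb-(h^0(\F_d)-1)$ presupposes that the map $p$ from curves to bundles is dominant onto a component; for $d=g+3$ the fibres of $p$ are single points while $\dim\HH_{g+3}(X)'=g+3$, yet the moduli component has dimension $2(g+3)-g-2=g+4$, so $p$ is \emph{not} dominant.

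The paper's fix is to reverse the logic: first establish smoothness and dimension $g+4$ of $\MM_{g+3}(X)'$ directly from $h^2(\F_{g+3}\otimes\F_{g+3}^*)=0$ (which follows from $h^1(\N_{C/X})=0$ via Lemma~\ref{lemma:h3h2}(vi)), and \emph{then} observe that since $g+4>g+3=\dim\HH_{g+3}(X)'$, the general $[\F'_{g+3}]\in\MM_{g+3}(X)'$ has $h^0(\F'_{g+3})=0$. Combined with $\chi(\F'_{g+3})=0$ and semicontinuity of $h^2,h^3$, this yields $h^i(\F'_{g+3})=0$ for all $i$, so $\F'_{g+3}(1)$ is Ulrich and in particular $\F'_{g+3}$ is $ACM$. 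This step---passing from the non-$ACM$ bundle with a section to the $ACM$ bundle without sections in the same moduli component---is the missing idea in your proposal.
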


 Note that the moduli spaces in the theorem are empty when $d$ is outside the given range (cf. Remark \ref{rem:FM} and Corollary \ref{cor:g4}).

For $d> \Big\lfloor \frac{g+3}{2} \Big\rfloor$ the latter  result is an improvement of \cite[Thm. 3.1]{bf}, since our result avoids the assumption that $X$ be {\it ordinary} (that is, $X$ contains a line $\ell$ with normal bundle $\N_{\ell/X} \cong \O_{\ell} \+ \O_{\ell}(-1)$). For $d=\Big\lfloor \frac{g+3}{2} \Big\rfloor$ our statement is contained in 
\cite[Thm. 3.1-3.2]{bf}, except that we avoid again the assumption that $X$ be ordinary when $(g,d)=(3,3)$; also note that the nonemptiness statement
of the moduli spaces in question when $d=\Big\lfloor \frac{g+3}{2} \Big\rfloor$  has until now only been known by a case--by--case treatment in various papers (see \cite[\S 3.1.1]{bf} and references therein), and that our proof via the existence of elliptic curves as in Theorem \ref{thm:ellipt} provides an alternative, unified and simpler proof of this fact scattered in literature. In particular, our proof is independent of the one in \cite{bf}.

The main motivation behind Theorem \ref{thm:prFanoACM} is that it completes the classification of rank--two $ACM$ bundles on smooth prime Fano threefolds. Indeed, since the property of being $ACM$ is independent of twists, one may restrict the study of such bundles to the ones with determinants $\O_X(m)$  for $m \in \{0,1\}$. Combining \cite[Thm. p.~114]{bf} and Theorem \ref{thm:prFanoACM} one obtains: {\it A smooth prime Fano threefold $X$ of degree $2g-2$ in $\PP^{g+1}$ carries a rank--two $ACM$ bundle $\F$ such that $c_1(\F)=[\O_X(m)]$ with $m \in \{ 0,1\}$ if and only if $d:=c_2(\F) \cdot \O_X(1)$ satisfies:
\begin{itemize}
  \item[(i)] $m=1$, $d =1$ or $\Big \lfloor \frac{g+3}{2} \Big \rfloor \leq d \leq g+3$,
  \item[(ii)] $m=0$, $d \in \{2,4\}$.
\end{itemize}}
\noindent The new contribution is the abolition of the assumption that $X$ be ordinary when $d \geq \frac{g}{2}+2$ in the ``if'' part. 

The proof of Theorem \ref{thm:ellipt} occupies \S \ref{sec:pf1}. The idea of the proof is conceptually simple: We exploit Mukai's description of the $K3$ hyperplane sections of prime Fano threefolds and the surjectivity of the period map of $K3$ surfaces to prove that there always exists a codimension--one family of hyperplane sections containing a smooth elliptic curve of degree $\Big \lfloor \frac{g+3}{2} \Big \rfloor$. We then construct the desired elliptic curves of higher degree by smoothing the union of an elliptic curve and a line intersecting transversally in one point. Despite the simple idea of the proof, there are several  technical details to be fixed, and most of this work is carried out in \S \ref{sec:ellvb}. The proof of Theorem \ref{thm:prFanoACM} follows as a rather direct consequence of  Theorem \ref{thm:ellipt} and is carried out in \S \ref{sec:pf2}.

Our second main result concerns Ulrich bundles:

\begin{thm} \label{thm:prFano}
  Let $X$ be a smooth prime Fano threefold of degree $2g-2$ in $\PP^{g+1}$. Then, for every  $h \in \ZZ^+$, there exists a stable\footnote{For Ulrich bundles the notions of stability and slope--stability coincide (cf. \cite[Thm. 2.9(a)]{CM}).}  Ulrich bundle $\E$ of rank $2h$ and determinant $\O_X(3h)$ on $X$. The moduli space of such bundles has a reduced and irreducible component of dimension $h^2(g+3)+1$.
\end{thm}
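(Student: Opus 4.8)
The plan is to build higher-rank Ulrich bundles by extension from the rank--two building blocks produced by Theorem~\ref{thm:prFanoACM}, and to control the deformation theory of the resulting objects so as to obtain a smooth point on the moduli space. First I would verify that for $d=g+3$ the bundle $\F_{g+3}$ of Theorem~\ref{thm:prFanoACM} is in fact an Ulrich bundle after a suitable twist: one has $\det(\F_{g+3})=\O_X(1)$, $c_2(\F_{g+3})\cdot\O_X(1)=g+3$ and $h^0(\F_{g+3}(-1))=0$, and a Riemann--Roch plus Serre-duality computation on the Fano threefold $X$ (using $-K_X=\O_X(1)$, $H^3(\O_X)=0$ and the vanishings coming from the $ACM$ property) should show that $\F_{g+3}(1)$ has the numerically required Hilbert polynomial and satisfies $h^i(\F_{g+3}(1)(-p))=0$ for $i\geq 0$ and $1\leq p\leq 3$; thus $\E_1:=\F_{g+3}(1)$ is a rank--two Ulrich bundle with $\det(\E_1)=\O_X(3)$, and it is slope--stable since $\F_{g+3}$ is. Its first-order deformation space is $\Ext^1(\E_1,\E_1)$, which by stability and Serre duality has dimension $h^1(\E_1\otimes\E_1\v)$; a direct Chern-class/Riemann--Roch count should give $\hom(\E_1,\E_1)=1$, $\ext^2=\ext^3=0$, and hence $\ext^1(\E_1,\E_1)=g+3$, matching the claimed dimension $h^2(g+3)+1$ at $h=1$ and showing the $h=1$ component is reduced.

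Next I would proceed by induction on $h$. Given a general stable Ulrich bundle $\E_{h-1}$ of rank $2(h-1)$ with $\det=\O_X(3(h-1))$ lying on a reduced component of the expected dimension, I would form a general extension
\begin{equation}\label{eq:ext-ulrich}
0\longrightarrow \E_{h-1}\longrightarrow \E_h\longrightarrow \E_1\longrightarrow 0,
\end{equation}
where $\E_1$ is a general rank--two Ulrich bundle as above. Since the class of Ulrich bundles is closed under extensions (the defining cohomology vanishings $h^i(\E(-p))=0$ pass through the long exact sequence), $\E_h$ is Ulrich of rank $2h$ and determinant $\O_X(3h)$. The standard argument (as in the Ulrich-wildness literature, e.g.\ via the techniques of \cite{CM}) shows that for $\E_{h-1}$ and $\E_1$ general and non-isomorphic, a general such extension is stable: any destabilizing subsheaf would, by the Ulrich property forcing all Jordan--H\"older factors to be Ulrich of the same slope, have to be a sub-Ulrich-bundle, and genericity of the extension class in $\Ext^1(\E_1,\E_{h-1})$ rules this out. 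One must check $\Ext^1(\E_1,\E_{h-1})\neq 0$ so that genuine non-split extensions exist; this follows from a Riemann--Roch computation showing $\chi(\E_1\v\otimes\E_{h-1})<0$ together with $\Hom(\E_1,\E_{h-1})=0=\Ext^3(\E_1,\E_{h-1})$ (the latter dual to $\Hom(\E_{h-1},\E_1(-1))=0$ since both are globally generated Ulrich bundles of the same slope and non-isomorphic simple factors).

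Then comes the deformation-theoretic heart of the argument: I would compute the dimension of $\Ext^1(\E_h,\E_h)$ and show it equals $h^2(g+3)+1$, and that $\Ext^2(\E_h,\E_h)=0$, so that the moduli space is smooth of the expected dimension at $[\E_h]$. Applying $\Hom(-,\E_h)$ and $\Hom(\E_h,-)$ to \eqref{eq:ext-ulrich} and splicing, one reduces everything to the groups $\Ext^i(\E_a,\E_b)$ for $a,b\in\{1,h-1\}$; by induction and the $h=1$ case these are controlled, the cross terms $\Ext^i(\E_1,\E_{h-1})$, $\Ext^i(\E_{h-1},\E_1)$ being computed by the same Chern-class Riemann--Roch on $X$. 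A bookkeeping computation—using $\ext^0=\dim\End=1$ by stability, $\ext^3=0$ by Serre duality and stability, and the Euler characteristic $\sum(-1)^i\ext^i(\E_h,\E_h)$ evaluated via Riemann--Roch with $c_1(\E_h)=3h\cdot\O_X(1)$, $c_2(\E_h)$ determined inductively, and $c_3(\E_h)=0$—should yield $\ext^1(\E_h,\E_h)-\ext^2(\E_h,\E_h)=h^2(g+3)+1$. The vanishing $\ext^2(\E_h,\E_h)=0$ I would obtain by showing inductively that the relevant obstruction-space contributions vanish: $\Ext^2(\E_1,\E_{h-1})$, $\Ext^2(\E_{h-1},\E_1)$ and $\Ext^2(\E_1,\E_1)$ are all zero (Serre-dual to $\Hom$ of Ulrich bundles twisted by $\O_X(-1)$, which vanishes as above), while $\Ext^2(\E_{h-1},\E_{h-1})=0$ by the inductive hypothesis; the long exact sequences then force $\Ext^2(\E_h,\E_h)=0$. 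Finally, optimality—that there are no Ulrich bundles on $X$ other than those of rank $2h$ and determinant $\O_X(3h)$—follows from the numerical constraints on Ulrich bundles on a Fano threefold: an Ulrich bundle has a resolution/numerical signature forcing its slope to be $3/2$ relative to $\O_X(1)$, hence even rank and determinant $\O_X(3h)$; I would cite or re-derive this from the Hilbert-polynomial normalization of Ulrich sheaves together with the structure of $\Pic(X)=\ZZ\O_X(1)$.

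The main obstacle I expect is the verification of stability of the general extension \eqref{eq:ext-ulrich} and, tied to it, the vanishing $\Ext^2(\E_h,\E_h)=0$ in the inductive step: one must be genuinely careful that the destabilizing-subsheaf analysis uses only that all Ulrich bundles here have the same reduced Hilbert polynomial slope and that $\E_1$, $\E_{h-1}$ are chosen general (so simple and mutually non-isomorphic with no unexpected homomorphisms), and that the dimension count of the family of such extensions modulo isomorphism exactly fills out a component of the expected dimension $h^2(g+3)+1$ rather than a smaller locus. The Riemann--Roch bookkeeping is routine but lengthy, and getting $c_2(\E_h)$ right at each inductive stage (it grows quadratically in $h$, which is what produces the $h^2$ in the dimension formula) is where sign or coefficient errors would most easily creep in.
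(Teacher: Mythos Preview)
Your overall architecture matches the paper's: start from the rank--two Ulrich bundle $\E_1=\F_{g+3}(1)$ coming from Theorem~\ref{thm:prFanoACM}, build higher ranks by iterated extensions with rank--two pieces, and control the deformation theory to get a smooth point of the expected dimension.  There are, however, two genuine problems.

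\textbf{The $\Ext^2$ vanishing.}  Your key step --- that $\Ext^2(\E_1,\E_{h-1})$, $\Ext^2(\E_{h-1},\E_1)$, $\Ext^2(\E_1,\E_1)$ all vanish because they are ``Serre--dual to $\Hom$ of Ulrich bundles twisted by $\O_X(-1)$'' --- is wrong.  On a threefold, Serre duality gives $\Ext^2(A,B)\cong \Ext^1(B,A\otimes\omega_X)^*=\Ext^1(B,A(-1))^*$, not $\Hom$; and $\Ext^1(\E_{h-1},\E_1(-1))$ has no reason to vanish from slope considerations alone.  The paper handles this (Lemma~\ref{lemma:indu}) by induction plus semicontinuity: the base case $h^2(\E_1\otimes\E_1^*)=0$ is \emph{not} a Riemann--Roch consequence but comes from the reducedness of the rank--two moduli component in Theorem~\ref{thm:prFanoACM} (equivalently, from $h^1(\N_{E_{g+3}/X})=0$ via Lemma~\ref{lemma:h3h2}(vi)); then one specializes $\E_h$ to an extension $\F_h$ and bounds $h^2(\E_h\otimes{\E'_1}^*)\le h^2(\F_h\otimes{\E'_1}^*)\le h^2(\E'_1\otimes{\E'_1}^*)+h^2(\E_{h-1}\otimes{\E'_1}^*)$, which vanishes by induction.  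Your inductive framework is sound, but the justification you give for the cross--term $\Ext^2$ vanishings does not work; you need the specialization argument.  (Relatedly, $\ext^1(\E_1,\E_1)=g+4$, not $g+3$: $\chi(\E_1\otimes\E_1^*)=-(g+3)$ and $h^0=1$, $h^2=h^3=0$ give $h^1=g+4=1^2(g+3)+1$.)

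\textbf{Stability.}  Your stability argument (``any destabilizing subsheaf is sub--Ulrich and genericity of the extension class rules this out'') is much vaguer than what the paper does and, as written, is not a proof.  The paper proceeds differently: rather than showing a general \emph{extension} is stable, it shows a general \emph{deformation} of the extension is.  Concretely (Lemma~\ref{lemma:uniquedest1} and Proposition~\ref{prop:h+1-II}), one first proves that any destabilizing subsheaf of a bundle in $\UU_X(h+1)^{\rm ext}$ has dual isomorphic to ${\E'_1}^*$; then, by a limit argument, if the general $\E_{h+1}\in\UU_X(h+1)$ were unstable, every $\E_{h+1}$ would already lie in $\UU_X(h+1)^{\rm ext}$.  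A dimension count (using $h^0(\E'_1\otimes\E_h^*)=0$ from stability and non--isomorphism) shows $\dim\UU_X(h+1)^{\rm ext}<\dim\UU_X(h+1)$, a contradiction.  You will need either this argument or a careful version of the ``general extension is simple/stable'' one; the hand--wave does not suffice.

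A final remark: tracking $c_2(\E_h)$ through Riemann--Roch is unnecessary.  Since $\chi$ is additive in extensions and constant in flat families, $\chi(\E_h\otimes\E_h^*)$ is most cleanly computed inductively by specializing to $\F_h$ and expanding via \eqref{eq:estensione} and its dual (Lemma~\ref{lemma:indu}(iv)), yielding $-h^2(g+3)$ directly.
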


This result is optimal in the sense that a prime Fano threefold $X$ does not carry any Ulrich bundles of odd rank by \cite[Cor. 3.7]{CFK} and any Ulrich bundle of rank $2h$ on $X$ is forced to have determinant $\O_X(3h)$ by \cite[Thm. 3.5]{CFK}.  

Theorem \ref {thm:prFano} in particular shows that any  smooth prime Fano threefold of degree $2g-2$ in $\PP^{g+1}$ is \emph{Ulrich wild}\footnote{In analogy with a  definition in \cite {DG}, a variety $X$ is said to be Ulrich wild if it possesses families of dimension $n$ of pairwise non--isomorphic, indecomposable, Ulrich bundles for arbitrarily large $n$.}.  Only very few cases of varieties carrying stable Ulrich bundles of infinitely many ranks are hitherto known.

The idea of the proof of Theorem \ref {thm:prFano} is also conceptually quite simple. Rank--two Ulrich bundles are obtained  twisting by $\O_X(1)$ the rank--two $ACM$ bundles in Theorem \ref{thm:prFanoACM} with $d=g+3$. Then Ulrich bundles of all even ranks $r \geq 4$ are obtained by induction as deformations of extensions of rank $r-2$ Ulrich  bundles with rank $2$ Ulrich  bundles. The proof is carried out in \S \ref{sec:pf3}. 

Finally, in \S \ref {sec:final}, taking profit from a correspondence we found in \cite [Thm.  3.5]{CFK} between rank two Ulrich bundles and certain curves on prime Fano threefolds, we propose an application to the moduli space of curves of genus $5g$ with  theta--characteristics with $g+2$ independent global sections, for $3\leq g\leq 5$.

 \medskip
Throughout the paper we work over the field of complex numbers.
 
\medskip

{\bf Acknowledgements:} C.~Ciliberto and F.~Flamini are members of  GNSAGA of the Istituto Nazionale di Alta Matematica ``F. Severi"  and acknowledge support from  the MIUR Excellence Department Project awarded to the Department of Mathematics, University of Rome Tor Vergata, 
CUP E83C18000100006.  A.~L.~Knutsen acknowledges support from the Meltzer Research Fund, the Trond Mohn Foundation Project ``Pure Mathematics in Norway'' and  grant 261756 of the
Research Council of Norway.

\section{Basics on prime Fano threefolds}\label{sec:fano}


A \emph{ (smooth) prime Fano threefold (of index 1) of degree $2g-2$ in $\PP^{g+1}$} is a smooth, \linebreak irreducible, projective, non--degenerate threefold $X \subset \PP^{g+1}$ of degree $2g-2$ 
such that  \linebreak $\omega_X \cong \O_X(-1)$ and 
$\Pic(X) \cong \ZZ[\O_X(1)]$. These threefolds exist only for $g \in \{3,\ldots,10,12\}$ (see \cite{IP}) and $g$ is called the \emph{genus} of the threefold. Any such $X$ is projectively normal. Moreover, its general curve section is a canonical curve of genus $g$ and its general hyperplane section is a K3 surface $S$ of genus $g$, which satisfies $\Pic(S) \cong  \ZZ[\O_S(1)]$  by Noether--Lefschetz (cf. \cite[Thm.\,3.33]{Vo}).

Prime Fano threefolds of genus $g=3,4,5$ are complete intersections: for $g=3$ they are quartic hypersurfaces in $\PP^4$, for $g=4$ they are complete intersections of type $(2,3)$ in $\PP^5$ and for $g=5$ they are complete intersections of type $(2,2,2)$ in $\PP^6$. For $g\geq 6$ the description of prime Fano threefolds is more complicated (see  \cite {IP}). If $g=4$, we will say that $X$ is in case $(\star)_4$ if the unique quadric containing $X$ is singular.

The Hilbert scheme $\LL(X)$ of lines in a prime Fano threefold $X$ is one--dimensional (cf., e.g., \cite[Cor. 1]{Is2}), and the lines in $X$ sweep out a surface that we will denote by $R_1(X)$. The normal bundle of any line $\ell$ in $X$ satisfies
\[ \N_{\ell/X} \cong \O_{\ell} \+ \O_{\ell}(-1) \;\; \mbox{or} \;\; \O_{\ell}(1) \+ \O_{\ell}(-2)\]
(cf. \cite[Lemma 3.2]{Is} and \cite[Lemma 1]{Is2}), so that in particular
\begin{equation}
  \label{eq:lines}
  \chi(\N_{\ell/X})=1 \; \; \mbox{and} \;\; h^0(\N_{\ell/X}) \leq 2.
\end{equation}
As recalled in the introduction, $X$ is said to be {\it ordinary}\footnote{Only two instances of non--ordinary  prime Fano threefolds are known: the {\it Fermat quartic} for $g=3$ and an example of Mukai--Umemura for $g=12$ \cite{MU}. It is furthermore known that a {\it general} prime Fano threefold of any genus is ordinary.}  if it contains a line $\ell$ with normal bundle $\N_{\ell/X} \cong \O_{\ell} \+ \O_{\ell}(-1)$.

We will make use of the following result:

\begin{prop}\label{prop:compR}
  Let $X$ be a prime Fano threefold of degree $2g-2$ in $\PP^{g+1}$. One of the following occurs:
  \begin{itemize}
\item[(I)] $R_1(X)$ has a component that is not a cone; or
\item[(II)] $R_1(X)$ consists of $40$ irreducible components that are all cones, and general pairs of lines in different components do not intersect.
\end{itemize}
The latter case\footnote{The {\it Fermat quartic} is an example, and it is not known whether there are others (see again \cite{HT}).}
 occurs only for $g=3$.
\end{prop}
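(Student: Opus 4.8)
The plan is to assume that we are \emph{not} in case (I) --- that is, that every irreducible component of $R_1(X)$ is a cone --- and to deduce that then $g=3$ and that $R_1(X)$ has exactly the structure described in (II). So fix an irreducible component $R$ of $R_1(X)$ that is a cone, with vertex $p$: then every line parametrised by the corresponding component $\Gamma$ of $\LL(X)$ passes through $p$, and $R=\bigcup_{[\ell]\in\Gamma}\ell$ is an irreducible surface. Since $\Pic(X)=\ZZ[\O_X(1)]$, we have $R\in|m\,\O_X(1)|$ for some integer $m\ge1$, whence $\deg R=m(2g-2)\ge2g-2$.

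The first step is to rule out $g\ge4$. As every line of $R$ passes through $p$ it is tangent to $X$ there, hence contained in the embedded tangent space $\mathbb{T}_pX\cong\PP^3$; therefore the whole surface $R$ lies in $X\cap\mathbb{T}_pX$. Now for $g\ge4$ the homogeneous ideal of $X$ is generated in degrees $\le3$, and in degree $2$ as soon as $g\ge5$ (see \cite{IP}). If some generator $F$ of this ideal does not vanish identically on $\mathbb{T}_pX$, then $R$ is contained in the hypersurface $\{F|_{\mathbb{T}_pX}=0\}$ of $\PP^3$, of degree $\le3$, and being an irreducible surface it is one of its components, so $\deg R\le3$, contradicting $\deg R\ge2g-2\ge6$. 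If instead all generators vanish on $\mathbb{T}_pX$, then $\mathbb{T}_pX\subseteq X$, which is absurd since $\dim\mathbb{T}_pX=\dim X$ while $\deg X>1$. Thus $g\ge4$ is impossible and we are reduced to $g=3$.

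Assume $g=3$, so $X\subset\PP^4$ is a quartic threefold and $X\cap\mathbb{T}_pX$ is a quartic hyperplane section containing the irreducible surface $R$; comparing degrees gives $m=1$ and $R=X\cap\mathbb{T}_pX\in|\O_X(1)|$. A quartic surface in $\PP^3$ that is a cone with vertex $p$ has multiplicity $4$ there, so $R$ is the cone over a plane quartic curve $C_R$ (the image of $R$ under projection from $p$), and $p$ is one of the finitely many points of $X$ at which the second and third fundamental forms vanish. Since every line of $X$ lies on such a cone and therefore passes through the vertex, each line $\ell\subset X$ moves in a one--dimensional family all of whose members pass through a fixed point $q\in\ell$, so $h^0(\N_{\ell/X}(-q))\ge1$; by the two possibilities recalled above for $\N_{\ell/X}$ this excludes $\N_{\ell/X}\cong\O_\ell\oplus\O_\ell(-1)$, forcing $\N_{\ell/X}\cong\O_\ell(1)\oplus\O_\ell(-2)$ --- hence $h^0(\N_{\ell/X})=2$ --- for \emph{every} line of $X$. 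Since $\dim_{[\ell]}\LL(X)=1$ at every point, $\LL(X)$ is everywhere non--reduced; each of its $N$ irreducible components is a plane quartic (the base $C_R$ of a cone of lines through a point) of Pl\"ucker degree $4$, because the lines through a fixed point form a linearly embedded $\PP^3\subset\GG(1,4)$, and $\LL(X)$ has multiplicity $2$ along it. As $\LL(X)$ is cut out on $\GG(1,4)$ by the tautological section of $\Sym^4\mathcal{S}^\vee$ ($\mathcal{S}$ the rank--two subbundle), its class is $c_5(\Sym^4\mathcal{S}^\vee)$, of Pl\"ucker degree $320$ by a standard Schubert--calculus computation; equating degrees in $c_5(\Sym^4\mathcal{S}^\vee)=\sum_i2\,[C_i]$ yields $2\cdot4\cdot N=320$, i.e.\ $N=40$. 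Finally the components of $R_1(X)$ are in bijection with those of $\LL(X)$ (a general point of a cone lies on a unique line of it), so $R_1(X)$ has exactly $40$ components, all cones, and all of this can occur only when $g=3$.

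It remains to verify that general pairs of lines in distinct components $R,R'$ do not intersect. With vertices $p\ne p'$ and base planes $\Lambda,\Lambda'$, a general line $\ell\subset R$ neither lies in $\mathbb{T}_{p'}X$ (otherwise $R\subseteq X\cap\mathbb{T}_{p'}X=R'$) nor passes through $p'$ (otherwise $p'$ would be the vertex of $R$), so any line of $R'$ meeting $\ell$ would lie in the plane $\langle p',\ell\rangle$ and hence correspond to a point of $C_{R'}\cap(\langle p',\ell\rangle\cap\Lambda')$, which is empty for general $\ell$ since $\langle p',\ell\rangle\cap\Lambda'$ is a single point sweeping a proper subvariety of $\Lambda'$ as $\ell$ varies. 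I expect the genuinely delicate point to be not this last step but the determination of the scheme structure of $\LL(X)$: one must prove that its multiplicity along each component equals exactly $2$ --- a second--order computation along the lines, or equivalently an explicit analysis of the quartic threefolds falling in case (II) (the Fermat quartic being one) --- and that $R_1(X)$ has no further components.
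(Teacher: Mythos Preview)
The paper does not prove this proposition at all: it simply cites Iskovskih for $g\ge4$ and Harris--Tschinkel for $g=3$. Your attempt at a self-contained argument is therefore necessarily a different route, and your treatment of $g\ge4$ is clean and correct: the cone $R$ sits in $X\cap\mathbb{T}_pX$, and comparing $\deg R\ge2g-2$ with the degree of a generator of the ideal of $X$ restricted to $\mathbb{T}_pX$ gives an immediate contradiction. This recovers Iskovskih's conclusion by an elementary degree bound.

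For $g=3$ your setup is also correct: each cone is a tangent hyperplane section, its reduced Fano component is a plane quartic of Pl\"ucker degree $4$, and the class $c_5(\Sym^4\mathcal{S}^\vee)$ indeed has Pl\"ucker degree $320$. Your argument that every line has $\N_{\ell/X}\cong\O_\ell(1)\oplus\O_\ell(-2)$ works for the \emph{general} line of each component (via $h^0(\N_{\ell/X}(-q))\ge1$), and this already gives $m_i\ge2$ for the multiplicity of $\LL(X)$ along each reduced component, hence $\sum m_i=80$ and $N\le40$. The non-intersection of general pairs of lines in distinct components also goes through (in fact more simply: if every $\ell'\subset R'$ met a fixed general $\ell\subset R$ not through $p'$, then $R'\subset\langle p',\ell\rangle$, a plane).

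The genuine gap, which you yourself flag, is the equality $m_i=2$. Knowing that the Zariski tangent space of $\LL(X)$ is two-dimensional at the general point of $\Gamma_i$ only gives $m_i\ge2$: a scheme locally of the form $\Spec k[t,\epsilon]/(\epsilon^n)$ has two-dimensional tangent space for every $n\ge2$. To pin down $m_i=2$ one needs a second-order computation---essentially that the obstruction map $H^0(\N_{\ell/X})\otimes H^0(\N_{\ell/X})\to H^1(\N_{\ell/X})$ is nonzero---or a direct analysis of the quartics in question, and this is precisely what Harris--Tschinkel carry out. Without it you only obtain $N\le40$, not $N=40$, so the proof is incomplete as it stands; the remaining step is not a formality.
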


\begin{proof}
This is proved for $g \geq 4$ in \cite[Thm.~3.4(iii)]{Is}\footnote{Note that
  the statement \cite[Thm.~3.4(ii)]{Is} is incorrect; the correct statement is \cite[Prop. 1]{Is2}.}, and for $g=3$ in \cite[p. 496]{HT}.
\end{proof}

We also record here the following information that will be necessary for us. 

\begin{lemma}\label{lem:dual}
Let $X$ be a prime Fano threefold of degree $2g-2$ in $\PP^{g+1}$. Then its dual variety $X^*$ is a hypersurface. As a consequence we have:
\begin{itemize}
\item [(i)] a general tangent hyperplane to $X$ at a general point $x\in X$ is simply tangent only at $x$;
\item [(ii)] the general member of any $g$--dimensional family of singular hyperplane sections of $X$ has a rational double point at a general point of $X$.
\end{itemize}
\end{lemma}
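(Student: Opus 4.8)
My plan is to read Lemma~\ref{lem:dual} through classical projective duality: once one knows that $X$ is not \emph{dual defective}, the statements (i) and (ii) follow formally from reflexivity and from the local structure of a general tangent hyperplane section. Let $\C\subseteq X\times(\PP^{g+1})^*$ be the conormal variety, i.e. the closure of the set of pairs $(x,H)$ with $x$ a smooth point of $X$ and $H\supseteq\PP(T_xX)$. Since $\PP(T_xX)$ is a $\PP^3$, the variety $\C$ is a $\PP^{g-3}$--bundle over $X$, hence irreducible of dimension $g$; writing $p\colon\C\to X$ and $q\colon\C\to(\PP^{g+1})^*$ for the projections, one has $q(\C)=X^*$, so $\dim X^*\le g$. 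To see that $X^*$ is a hypersurface I would show that the dual defect $\delta(X):=g-\dim X^*$ vanishes. If $\delta(X)\ge 1$, the classical structure theorem for dual defective smooth varieties says that the contact locus of a general $H\in X^*$ is a linear subspace $\PP^{\delta(X)}\subseteq X$ of positive dimension; as $p$ is surjective these contact loci cover a dense subset of $X$, so a general point of $X$ would lie on a line contained in $X$, forcing the lines of $X$ to sweep out all of $X$ and contradicting the fact recalled in \S\ref{sec:fano} that they sweep out only the surface $R_1(X)$. Hence $\delta(X)=0$ and $X^*$ is a hypersurface. (Alternatively one may invoke Ein's classification of dual defective smooth threefolds, none of which is a prime Fano threefold of index~$1$.)

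For (i): working in characteristic $0$, reflexivity gives $\C_{X^*}=\C$ and $X^{**}=X$; since $X^*$ has codimension $1$, its embedded projective tangent space at a general smooth point is a hyperplane in $(\PP^{g+1})^*$, and hence is the unique such hyperplane, so $q$ is birational. Therefore a general tangent hyperplane $H$ is tangent to $X$ at a single point $x$. Moreover, at a general $(x,H)\in\C$ the second fundamental form of $X$ at $x$ paired with the conormal direction of $H$ is a quadratic form on the $3$--dimensional space $T_xX$ whose corank equals $\delta(X)=0$; being nondegenerate, it exhibits $x$ as an ordinary double point---i.e. a rational double point of type $A_1$---of the surface $X\cap H$, which is smooth elsewhere. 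Finally, since $p$ is a $\PP^{g-3}$--bundle, for a general $x\in X$ a general hyperplane through $T_xX$ corresponds to a general point of $\C$, so such a hyperplane is simply tangent to $X$ only at $x$; this is (i).

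For (ii): a $g$--dimensional family of singular hyperplane sections of $X$ is a $g$--dimensional subvariety $\Sigma\subseteq(\PP^{g+1})^*$ whose general member $H$ has $X\cap H$ singular; as $X$ is smooth this means exactly that $H\in X^*$, so $\Sigma\subseteq X^*$. Since $X^*$ is irreducible (the dual of the irreducible $X$) of dimension $g$, the subvariety $\Sigma$ is dense in $X^*$, so a general member of the family is a general point of $X^*$ and (i) applies: $X\cap H$ has an ordinary double point at its unique tangency point $x=x(H)$. Moreover $H\mapsto x(H)$ is dominant (it is $p$ composed with the birational inverse of $q$), so $x(H)$ is a general point of $X$; this proves (ii). The only input beyond formal duality theory is the vanishing of the dual defect in the first paragraph, and for prime Fano threefolds this is immediate from the description of $R_1(X)$ recalled earlier, so I do not expect a serious obstacle here.
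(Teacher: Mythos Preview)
Your argument is correct and follows essentially the same route as the paper's own proof: both exclude a positive dual defect by observing that the resulting positive--dimensional linear contact loci would force lines to cover $X$, contradicting $\dim(\LL(X))=1$, and then derive (i) and (ii) from standard reflexivity and second--fundamental--form considerations that the paper simply packages as a citation to \cite[Thm.~1.5.10]{Rus}. Your write--up just unpacks these facts in more detail.
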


 \begin{proof} If $X^*$ is not a hypersurface, then the general tangent hyperplane to $X$ at a general point $x\in X$ is tangent along a linear subspace of positive dimension containing $x$ (see \cite[Thm. 1.5.10]{Rus}), contradicting the fact that $\dim(\LL(X))=1$. Assertion (i) follows again from \cite[Thm. 1.5.10]{Rus} and assertion (ii) follows from (i). \end{proof}

 \section{Elliptic curves and rank--two bundles} \label{sec:ellvb}

 Throughout this section $X$ will denote an arbitrary prime  Fano threefold of degree $2g-2$ in $\PP^{g+1}$. 
We gather some useful  results concerning rank--two vector bundles and elliptic curves on $X$. The starting point is the following:

\begin{prop} \label{prop:sez} (i) Any   local complete intersection  curve $E_d \subset X$ of degree $d$ with trivial canonical bundle gives rise to an exact sequence
\begin{equation}
  \label{eq:HS1}
  \xymatrix{
    0 \ar[r] & \O_X \ar[r]^{s_d} & \F_{d} \ar[r] & \I_{E_{d}/X}(1) \ar[r] & 0,} 
\end{equation} for a unique  vector bundle $\F_d$ on $X$ satisfying
\begin{equation}
  \label{eq:propF}
  \rk(\F_d)=2,\;\; \det(\F_d)=\O_X(1),\;\; c_2(\F_d)\cdot \O_X(1)=d, \;\; h^0(\F_d(-1))=0.
\end{equation}

\smallskip

(ii) Conversely, if $\F_d$ is a vector bundle on $X$ satisfying \eqref{eq:propF}, then it is slope--stable and any  non--zero  section $s_d \in H^0(\F_d)$ vanishes along a local complete intersection  curve $E_d$ of degree $d$ with trivial canonical bundle and there exists a sequence like \eqref{eq:HS1}. Moreover,  non--proportional non--zero sections vanish along distinct curves.

\smallskip

(iii) If, in (ii), we  furthermore have $h^0(\F_d) \geq 2$, then 
we have a natural map
\begin{eqnarray}
  \label{eq:mappawedge}
  w: \mbox{\rm{Grass}}(2,H^0(\F_d))  & \longrightarrow & |\O_X(1)|, \\
 \nonumber       V & \mapsto & Z(s_1 \wedge s_2),
\end{eqnarray}
where $\langle s_1,s_2 \rangle$ is any basis for $V$  and $Z(s_1 \wedge s_2)$ is the zero scheme of $s_1 \wedge s_2$. 
The surface  $Z(s_1 \wedge s_2)$  contains the pencil $\PP(V)$ of curves  that are zero schemes $Z(us_1+vs_2)$ of sections of the form $us_1+vs_2$  for all $(u:v)\in \PP^1$  and it is singular at least where $s_1$ and $s_2$ both vanish.
\end{prop}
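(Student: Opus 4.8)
The plan is to establish Proposition~\ref{prop:sez} via the Hartshorne--Serre correspondence together with standard stability arguments for rank--two bundles. For part~(i), given a local complete intersection curve $E_d \subset X$ with $\omega_{E_d} \cong \O_{E_d}$, I would first note that since $\omega_X \cong \O_X(-1)$, adjunction gives $\omega_{E_d} \cong \det(\N_{E_d/X}) \otimes \omega_X|_{E_d}$, so $\det(\N_{E_d/X}) \cong \O_{E_d}(1)$; in particular the determinant of the normal bundle is the restriction of a line bundle on $X$. The Hartshorne--Serre construction (see \cite{Ar} or the standard reference) then produces a rank--two bundle $\F_d$ sitting in \eqref{eq:HS1}, and the Chern class computation from the sequence is routine: $c_1(\F_d) = \O_X(1)$ and $c_2(\F_d) = [E_d]$, whence $c_2(\F_d) \cdot \O_X(1) = \deg(E_d) = d$. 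For the vanishing $h^0(\F_d(-1))=0$: twisting \eqref{eq:HS1} by $\O_X(-1)$ gives $0 \to \O_X(-1) \to \F_d(-1) \to \I_{E_d/X} \to 0$; since $h^0(\O_X(-1))=0$ and $h^0(\I_{E_d/X})=0$ (as $X$ is irreducible), the claim follows. Uniqueness of $\F_d$ is the standard uniqueness in Hartshorne--Serre once one fixes the determinant and checks $h^1(\O_X)=h^2(\O_X)=0$, which holds since $X$ is Fano (so $H^i(\O_X)=0$ for $i>0$).

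For part~(ii), the key point is slope--stability. Suppose $\F_d$ satisfies \eqref{eq:propF} and is not slope--stable; since $\det(\F_d)=\O_X(1)$ with $\O_X(1)$ a generator of $\Pic(X) \cong \ZZ$, a destabilizing subsheaf would have to be a rank--one reflexive subsheaf of the form $\O_X(k) \hookrightarrow \F_d$ with $2k \geq \mu(\F_d) = 1$, i.e. $k \geq 1$; but then $h^0(\F_d(-1)) \geq h^0(\O_X(k-1)) \geq 1$, contradicting \eqref{eq:propF}. Hence $\F_d$ is slope--stable (in particular simple, so $h^0(\F_d(-1))=0$ is not automatic but has been assumed). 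Stability also forces $h^0(\F_d(-1))=0$ to be consistent and, more importantly, gives that $\F_d$ is globally generated in codimension suitable for the Serre construction to run backwards: a nonzero section $s_d \in H^0(\F_d)$ vanishes in codimension $\geq 2$ (if it vanished on a divisor $D$, then $\O_X(D) \hookrightarrow \F_d$ with $\O_X(D) = \O_X(k)$, $k \geq 1$, again contradicting the vanishing), so its zero locus $E_d$ is a curve which is local complete intersection because $\F_d$ has rank $2$ (the Koszul/Serre argument), and reversing \eqref{eq:HS1} identifies $\I_{E_d/X}(1)$ as the cokernel, giving $\deg E_d = c_2(\F_d)\cdot\O_X(1) = d$ and $\omega_{E_d}$ trivial by adjunction run backwards. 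That non--proportional sections vanish on distinct curves follows because if $s$ and $s'$ vanished on the same $E_d$, then $s' - \lambda s$ would vanish on a divisor for suitable $\lambda$ (matching along $E_d$), forcing proportionality by the divisorial argument above.

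For part~(iii), assuming $h^0(\F_d) \geq 2$, I would define the wedge map by sending a two--dimensional subspace $V = \langle s_1, s_2\rangle$ to $Z(s_1 \wedge s_2)$, where $s_1 \wedge s_2 \in H^0(\wedge^2 \F_d) = H^0(\O_X(1))$; the zero scheme is a surface (a hyperplane section) since $s_1 \wedge s_2$ vanishes exactly where $s_1, s_2$ become linearly dependent, which in general is a divisor. Well--definedness on the Grassmannian is clear since changing the basis of $V$ multiplies $s_1 \wedge s_2$ by the determinant of the change of basis, a nonzero scalar. For any $(u:v) \in \PP^1$, the section $us_1 + vs_2$ wedged with itself vanishes, and more to the point $Z(us_1 + vs_2) \subset Z(s_1 \wedge s_2)$ because at a point of $Z(us_1+vs_2)$ either $us_1+vs_2 = 0$ makes $s_1, s_2$ dependent, or one computes directly that the wedge lies in the span; so the surface contains the whole pencil $\{Z(us_1+vs_2)\}_{(u:v)\in\PP^1} = \PP(V)$. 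Finally, at a point where both $s_1$ and $s_2$ vanish, the section $s_1 \wedge s_2$ vanishes to order $\geq 2$ (it is a sum of products of two vanishing functions), so $Z(s_1 \wedge s_2)$ is singular there. The main obstacle I anticipate is not any single step but making the local complete intersection and "vanishing in codimension $2$" claims in part~(ii) fully rigorous in the possibly non--reduced setting and verifying the precise cohomological hypotheses ($H^1(\O_X) = H^2(\O_X) = 0$, projective normality) needed for the Hartshorne--Serre correspondence to yield a genuine vector bundle rather than merely a reflexive sheaf --- these are standard for Fano threefolds but require care to cite correctly.
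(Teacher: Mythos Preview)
Your proposal is correct and follows essentially the same approach as the paper, which simply cites the Hartshorne--Serre correspondence and calls the remaining points well--known or standard; you have filled in the details the paper omits. One small imprecision: your argument that non--proportional sections vanish on distinct curves (``$s'-\lambda s$ would vanish on a divisor for suitable $\lambda$'') is not quite how it goes---the clean reason is that the extension class in $\Ext^1(\I_{E_d/X}(1),\O_X)\cong H^0(\omega_{E_d})^*\cong\CC$ is one--dimensional, so the section inducing it is unique up to scalar---but this does not affect the overall correctness.
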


\begin{proof}
  The correspondence in (i)--(ii) between the curve $E_d$ and the vector bundle $\F_d$ with section $s_d$ is well--known,  by the Hartshorne--Serre correspondence (see, e.g., \cite{har,har2,A}).
Properties \eqref{eq:propF} can be deduced from \eqref{eq:HS1}; moreover, the 
facts that $h^0(\F_d(-1))=0$ and and $\Pic(X) \cong \ZZ[\O_X(1)]$  imply that {\it any} section of $\F_d$ vanishes along a curve. Finally, slope--stability of $\F_d$ is an immediate consequence of the facts that $\Pic(X) \cong \ZZ[\O_X(1)]$, $\rk(\F_d)=2$, $\det(\F_d) \cong \O_X(1)$ and $h^0(\F_d(-1))=0$.
  
Assertion (iii) is standard.
\end{proof}

\begin{remark}\label{rem:local}  Assume that in case (iii) of Proposition \ref{prop:sez} the zero schemes $E_i:=Z(s_i)$, $i=1,2$, are smooth, irreducible, distinct, elliptic curves. Let $Z$ be the intersection scheme $E_1 \cap E_2$. Then the surface $S:=Z(s_1 \wedge s_2)$ is singular along $Z$. More precisely, suppose that $E_1$ and $E_2$ have a contact of order $k$ at a point $p$, so that $Z$ locally at $p$ is a curvilinear scheme of length $k$. We can introduce local coordinates $(x,y,z)$ on $X$ around $p$ so that $E_1$ and $E_2$ have local equations $x=y=0$ and $x-y=y+z^k=0$, respectively. Then $S$ has equation  $xz^k +  y^2=0$  around $p$, so that $S$  has a double point at $p$ and actually it has $k$ double points infinitely near to $p$ along $E_1$ or, equivalently,  along $E_2$, or, if we prefer, along $Z$. Therefore, on a desingularization $S'$ of $S$ the  strict transforms  $E'_1$ and $E'_2$  of $E_1$ and $E_2$  are isomorphic to $E_1$ and $E_2$ respectively, and belong to a base point free pencil, hence ${E'}_1^2={E'}_2^2 =0$.  
\end{remark}

Denote by $\overline{\HH}_d(X)$ the Hilbert scheme parametrizing local complete intersection curves in $X$ of degree $d$ with trivial canonical bundle and by $\HH_d(X)$ the sublocus parametrizing smooth irreducible (elliptic) curves. Recall that the tangent space of $\overline{\HH}_d(X)$ at a point $[E_d]$ is $H^0(\N_{E_d/X})$, by standard theory of Hilbert schemes. 

Denote by $\MM_d(X)$ the moduli space of  vector  bundles on $X$
satisfying \eqref{eq:propF}, which are all slope--stable by Proposition \ref{prop:sez}(ii).
By the same proposition there is a  morphism
\begin{equation} \label{eq:mappap}
  p: \overline{\HH}_d(X) \longrightarrow \MM_d(X)
  \end{equation}
mapping $[E_d]$ to $[\F_d]$. The image consists of the vector bundles having a section (by Proposition \ref{prop:sez}(ii)), and  the fiber over a point $[\F_d]$ is $\PP(H^0(X,\F_d))$.

The next three lemmas collect useful properties of the members of $\overline{\HH}_d(X)$ and $\MM_d(X)$:

\begin{lemma} \label{lemma:FM} 
  Let $[\F_d] \in \MM_d(X)$. Then
  \begin{itemize}
  \item[(i)] $\chi(\F_d)=g+3-d$ and $h^3(\F_d)=0$;
    \item[(ii)] $\chi(\F_{d} \* \F_{d}^*)=  g-2d+3$ and 
$h^0(\F_{d} \* \F_{d}^*)=1$.
  \end{itemize}
\end{lemma}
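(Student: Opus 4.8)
The plan is to compute all the relevant Euler characteristics via Riemann--Roch on the threefold $X$ and then pin down the individual cohomology groups using stability together with the sequence \eqref{eq:HS1}.

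First I would establish (i). The cleanest route is to apply Riemann--Roch for a rank--two bundle on the Fano threefold $X$, using $\det(\F_d)=\O_X(1)$, $c_2(\F_d)\cdot\O_X(1)=d$, together with the numerical data of $X$: $-K_X=\O_X(1)$, $(\O_X(1))^3=2g-2$, and $c_2(X)\cdot\O_X(1)=\chi_{\mathrm{top}}$--type data which, via Riemann--Roch for $\O_X$ on a Fano threefold (so $\chi(\O_X)=1$), gives $\frac{1}{12}c_1(X)c_2(X)=1$, i.e. $\O_X(1)\cdot c_2(X)=\frac{1}{12}\cdot 12 \cdot \frac{(2g-2)}{\text{?}}$--actually the clean statement is $\chi(\O_X(t))=\frac{(2g-2)}{6}t^3 + \frac{(2g-2)}{2}t^2 + \ldots$ with $\chi(\O_X(1))=g+1$. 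From $0\to\O_X\to\F_d\to\I_{E_d/X}(1)\to 0$ one gets $\chi(\F_d)=\chi(\O_X)+\chi(\O_X(1))-\chi(\O_{E_d}(1))=1+(g+1)-d=g+3-d$, since $E_d$ has trivial canonical bundle and degree $d$, so $\chi(\O_{E_d}(1))=\deg(\O_X(1)|_{E_d})=d$. For $h^3(\F_d)=0$: by Serre duality $h^3(\F_d)=h^0(\F_d^*\otimes\omega_X)=h^0(\F_d^*(-1))$; since $\F_d$ is slope--stable of rank $2$ with $\det\F_d=\O_X(1)$, one has $\F_d^*\cong\F_d(-1)$, so $h^3(\F_d)=h^0(\F_d(-2))\le h^0(\F_d(-1))=0$.

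Next, for (ii): $\F_d\otimes\F_d^*\cong\End(\F_d)$ splits off a trivial summand, $\End(\F_d)\cong\O_X\oplus\mathcal{E}nd_0(\F_d)$ where the second factor is traceless; alternatively, since $\F_d^*\cong\F_d(-1)$, we have $\F_d\otimes\F_d^*\cong\F_d\otimes\F_d(-1)$, and $\F_d\otimes\F_d\cong\Sym^2\F_d\oplus\wedge^2\F_d=\Sym^2\F_d\oplus\O_X(1)$. Hence $\F_d\otimes\F_d^*\cong\Sym^2(\F_d)(-1)\oplus\O_X$. For the Euler characteristic I would again use Riemann--Roch: $\chi(\F_d\otimes\F_d^*)$ depends only on the Chern classes of $\F_d$, and the Bogomolov--type computation gives $\chi(\End\F_d)=\chi(\O_X)+\int_X\ch(\F_d)\ch(\F_d^*)\,\mathrm{td}(X)$ minus a correction; plugging in $c_1^2-4c_2$ (the discriminant) and the numerical data of $X$ should yield $g-2d+3$. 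For the vanishing $h^0(\F_d\otimes\F_d^*)=1$: this is exactly the statement that $\F_d$ is \emph{simple}, i.e. $\Hom(\F_d,\F_d)=\CC\cdot\id$, which is immediate from slope--stability (a stable bundle is simple), and $h^0(\End\F_d)=\dim\Hom(\F_d,\F_d)$.

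The main obstacle, as I see it, is not the vanishing statements --- those are routine consequences of stability and Serre duality --- but rather getting the Riemann--Roch constants exactly right, particularly for $\chi(\F_d\otimes\F_d^*)$, where one must carefully assemble $\mathrm{ch}_2$ and $\mathrm{ch}_3$ of the rank--four bundle $\End(\F_d)$ in terms of $d$, $g$, and the Todd class of $X$ (which involves $c_2(X)\cdot\O_X(1)$, itself determined by $\chi(\O_X)=1$). I would double--check the constant by specializing to a case where $\F_d$ is explicitly known (e.g. small $g$) or by comparing with \cite{bf}. Once the constant $g-2d+3$ is confirmed for $\chi$, everything else follows formally, so I expect the write--up of this lemma to be short.
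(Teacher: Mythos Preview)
Your approach is essentially the paper's: Riemann--Roch for the Euler characteristics, Serre duality together with $\F_d^*\cong\F_d(-1)$ for $h^3(\F_d)=0$, and slope--stability $\Rightarrow$ simple for $h^0(\F_d\otimes\F_d^*)=1$. Two small points to clean up: first, your displayed computation has $\chi(\O_X(1))=g+1$, but since $X\subset\PP^{g+1}$ is linearly normal with $h^i(\O_X(1))=0$ for $i>0$ (Kodaira vanishing, as $\O_X(1)=-K_X$), one has $\chi(\O_X(1))=g+2$, which is what makes your final answer $g+3-d$ correct; second, computing $\chi(\F_d)$ via the sequence \eqref{eq:HS1} presupposes $h^0(\F_d)>0$, which need not hold for arbitrary $[\F_d]\in\MM_d(X)$ (e.g.\ the general member when $d=g+3$), but this is harmless since $\chi$ depends only on Chern classes.
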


\begin{proof}
  The computations of $\chi$ follow from Riemann--Roch on $X$. Since $\rk(\F_d)=2$ and $\det(\F_d)=\O_X(1)$, we have $\F_d^* \cong \F_d(-1)$, whence $h^3(\F_d)=h^0(\F_d^*(-1))=h^0(\F_d(-2))=0$. Finally, $h^0(\F_{d} \* \F_{d}^*)=\dim (\End(\F_d))=1$ as $\F_d$ is slope--stable, whence simple. 
\end{proof}

\begin{remark} \label{rem:FM}
  If a bundle $\F_d$  in $\MM_d(X)$ is $ACM$, then $h^0(\F_d)=\chi(\F_d)=g+3-d$ by Lemma \ref{lemma:FM}(i). Consequently, the locus of $ACM$ bundles in $\MM_d(X)$ is empty if $d > g+3$.
\end{remark}

\begin{lemma} \label{lemma:h3h2}
  Let $[E_d] \in \overline{\HH}_d(X)$ and $[\F_d]=p([E_d]) \in \MM_d(X)$. Then:

  \begin{itemize}
  \item[(i)] $\N_{E_d/X} \cong \F_d|_{E_d}$ and $\chi(\N_{E_d/X})=d$;
  \item[(ii)] $h^i(\I_{E_d/X})=0$ for $i \in \{0,1,3\}$ and $h^2(\I_{E_d/X})=1$;
   \item[(iii)] $\chi(\I_{E_d/X}(1))=g+2-d$ and $h^i(\I_{E_d/X}(1))=0$ for $i \in \{2,3\}$;
   \item[(iv)]  $h^i(\F_d(-1))=0$ for $i \in \{0,1,2,3\}$;
   \item[(v)] 
     $h^0(\F_d)=h^0(\I_{E_d/X}(1))+1$, $h^1(\F_d)=h^1(\I_{E_d/X}(1))$
     and $h^2(\F_d)=0$; 
\item[(vi)] 
  $h^3(\F_{d} \* \F_{d}^*)=0$, and $h^2(\F_{d} \* \F_{d}^*) \leq h^1(\N_{E_d/X})$, with equality if $h^1(\F_d)=0$.
\end{itemize}
  \end{lemma}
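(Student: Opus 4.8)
The plan is to derive everything from the Hartshorne--Serre sequence \eqref{eq:HS1}, from the structure sequence of $E_d$ in $X$, from Serre duality on $X$ (using $\omega_X \cong \O_X(-1)$ and $\F_d^* \cong \F_d(-1)$), together with the vanishings $h^i(\O_X(t))=0$ for $1\le i\le 2$ and all $t$, $h^0(\O_X(t))=0$ for $t<0$, $h^3(\O_X)=0$, and $h^3(\O_X(t))=0$ for $t\ge 0$, all of which hold because $X$ is a projectively normal Fano threefold with $\omega_X\cong\O_X(-1)$ (so $h^3(\O_X(t))=h^0(\O_X(-1-t))$). I would also use that $E_d$ has trivial canonical bundle, so by adjunction $\det\N_{E_d/X}\cong \omega_{E_d}\otimes\omega_X^*|_{E_d}\cong\O_{E_d}(1)$, a line bundle of positive degree $d$ on the (possibly singular, arithmetic genus one) curve $E_d$.

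First I would prove (i): restricting \eqref{eq:HS1} to $E_d$ and using that $s_d$ vanishes exactly on $E_d$ gives $\F_d|_{E_d}\cong\N_{E_d/X}$ in the usual way (the Hartshorne--Serre identification), and then $\chi(\N_{E_d/X})=\deg\det\N_{E_d/X}+\rk\cdot(1-p_a(E_d))=d+2\cdot 0=d$ by Riemann--Roch on $E_d$. Next, (ii): twist the ideal sequence $0\to\I_{E_d/X}\to\O_X\to\O_{E_d}\to0$ and take cohomology; since $h^0(\O_X)=h^0(\O_{E_d})=1$ the map on $H^0$ is an isomorphism, $h^1(\O_{E_d})=1$ forces... wait, more carefully: $h^1(\O_X)=0$ and $h^1(\O_{E_d})=1$ give $h^2(\I_{E_d/X})$ receiving the cokernel, and $h^2(\O_X)=h^3(\O_X)=0$ give $h^2(\I_{E_d/X})=h^1(\O_{E_d})=1$ and $h^3(\I_{E_d/X})=0$; $h^0(\I_{E_d/X})=0$ since $E_d\ne\emptyset$, and $h^1(\I_{E_d/X})=0$ from the isomorphism on $H^0$. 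For (iii), twist the ideal sequence by $\O_X(1)$: $h^2(\O_X(1))=h^3(\O_X(1))=0$ and $h^i(\O_{E_d}(1))=0$ for $i\ge 1$ (degree $d>0$ on a genus-one curve) give $h^i(\I_{E_d/X}(1))=0$ for $i\in\{2,3\}$, and $\chi(\I_{E_d/X}(1))=\chi(\O_X(1))-\chi(\O_{E_d}(1))=(g+2)-d$ since $\chi(\O_X(1))=h^0(\O_X(1))=g+2$ (projective normality plus the embedding in $\PP^{g+1}$) and $\chi(\O_{E_d}(1))=d$. For (iv), $\F_d(-1)\cong\F_d^*$, so twisting \eqref{eq:HS1} by $\O_X(-1)$ gives $0\to\O_X(-1)\to\F_d(-1)\to\I_{E_d/X}\to0$; combine $h^i(\O_X(-1))=0$ for all $i$ (including $h^3(\O_X(-1))=h^0(\O_X)=1$ — here one must be careful: $h^3(\O_X(-1))=h^0(\O_X)=1\ne0$, so instead use Serre duality directly: $h^i(\F_d(-1))=h^i(\F_d^*)=h^{3-i}(\F_d^*{}^*\otimes\omega_X)=h^{3-i}(\F_d(-1))$, which is only a symmetry) — so the cleanest route for (iv) is: $h^0(\F_d(-1))=0$ by hypothesis \eqref{eq:propF}; $h^3(\F_d(-1))=h^0(\F_d^*\otimes\omega_X)=h^0(\F_d(-2))=0$; and $h^1(\F_d(-1))=0$, $h^2(\F_d(-1))=0$ follow from the twisted sequence $0\to\O_X(-1)\to\F_d(-1)\to\I_{E_d/X}\to0$ using $h^1(\O_X(-1))=h^2(\O_X(-1))=0$, $h^1(\I_{E_d/X})=0$ (part (ii)), $h^2(\I_{E_d/X})=1$ and $h^3(\O_X(-1))=h^0(\O_X)=1$, with the connecting map $H^2(\I_{E_d/X})\to H^3(\O_X(-1))$ being an isomorphism — which one sees because $h^3(\F_d(-1))=0$ forces surjectivity and both spaces are one-dimensional. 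I would spell this boundary-map argument out as it is the one genuinely delicate point.

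For (v), take cohomology of \eqref{eq:HS1} itself: $h^0(\O_X)=1$, $h^1(\O_X)=h^2(\O_X)=0$ give $h^0(\F_d)=1+h^0(\I_{E_d/X}(1))$, $h^1(\F_d)=h^1(\I_{E_d/X}(1))$, and $h^2(\F_d)=h^2(\I_{E_d/X}(1))=0$ by (iii); one checks the first map $H^0(\O_X)\to H^0(\F_d)$ is injective since $s_d\ne0$. For (vi), use $\F_d^*\cong\F_d(-1)$ so $\F_d\otimes\F_d^*\cong\F_d\otimes\F_d(-1)$, and tensor \eqref{eq:HS1} by $\F_d^*=\F_d(-1)$ to get $0\to\F_d(-1)\to\F_d\otimes\F_d^*\to\F_d(-1)\otimes\I_{E_d/X}(1)=\F_d\otimes\I_{E_d/X}\to0$. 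From (iv), $h^i(\F_d(-1))=0$ for all $i$, so $h^i(\F_d\otimes\F_d^*)\cong h^i(\F_d\otimes\I_{E_d/X})$ for all $i$. Now tensor $0\to\I_{E_d/X}\to\O_X\to\O_{E_d}\to0$ by $\F_d$: this gives $0\to\F_d\otimes\I_{E_d/X}\to\F_d\to\F_d|_{E_d}\to0$, and $\F_d|_{E_d}\cong\N_{E_d/X}$ by (i). Using $h^3(\F_d)=0$ (Lemma \ref{lemma:FM}(i)) and $h^3(\N_{E_d/X})=0$ (it is a sheaf on a curve), we get $h^3(\F_d\otimes\I_{E_d/X})=0$, hence $h^3(\F_d\otimes\F_d^*)=0$. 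From the long exact sequence, $h^2(\F_d\otimes\I_{E_d/X})$ sits between $h^2(\F_d)=0$ (by (v)) and... rather: $\cdots\to H^1(\N_{E_d/X})\to H^2(\F_d\otimes\I_{E_d/X})\to H^2(\F_d)=0$, so $h^2(\F_d\otimes\I_{E_d/X})\le h^1(\N_{E_d/X})$, with equality iff the preceding map $H^1(\F_d)\to H^1(\N_{E_d/X})$ is zero — which holds in particular when $h^1(\F_d)=0$. Combining with $h^2(\F_d\otimes\F_d^*)\cong h^2(\F_d\otimes\I_{E_d/X})$ gives the claim.

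The main obstacle I anticipate is the bookkeeping in part (iv): because $h^3(\O_X(-1))=h^0(\O_X)=1$ is nonzero, the naive ``sandwich'' vanishing argument fails, and one must instead verify by hand that the connecting homomorphism $H^2(\I_{E_d/X})\to H^3(\O_X(-1))$ in the sequence $0\to\O_X(-1)\to\F_d(-1)\to\I_{E_d/X}\to0$ is an isomorphism of one-dimensional spaces, using $h^3(\F_d(-1))=0$ (already obtained via Serre duality). Everything else is routine chasing of long exact sequences combined with the standard cohomology of $\O_X(t)$ on a prime Fano threefold and Riemann--Roch on the genus-one curve $E_d$.
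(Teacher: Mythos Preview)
Your argument is correct and follows essentially the same path as the paper's proof. The one simplification you missed is in (iv): the Serre-duality symmetry $h^i(\F_d(-1))=h^{3-i}(\F_d(-1))$ that you noted and then set aside as ``only a symmetry'' is exactly what the paper exploits---once $h^0(\F_d(-1))=h^1(\F_d(-1))=0$ are read off from $0\to\O_X(-1)\to\F_d(-1)\to\I_{E_d/X}\to0$ together with part (ii), the symmetry immediately gives $h^2=h^3=0$, so your connecting-map chase, while valid, is unnecessary. (Minor slip: in your Serre-duality computation of $h^3(\F_d(-1))$ you should get $h^0((\F_d(-1))^*\otimes\omega_X)=h^0(\F_d(-1))$, not $h^0(\F_d(-2))$, since $(\F_d(-1))^*\cong\F_d$; the conclusion is unaffected.)
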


\begin{proof}  
Property (i) is immediate from \eqref{eq:HS1} and Riemann--Roch. From the  sequence
  \begin{equation}
  \label{eq:IS1}
  \xymatrix{
    0 \ar[r] & \I_{E_d/X} \ar[r] & \O_X \ar[r] & \O_{E_d} \ar[r] & 0} 
\end{equation}
we deduce (ii) and (iii). 
From \eqref{eq:HS1} and (iii) we deduce (v), whereas from 
\eqref{eq:HS1} and (ii) we deduce that $h^i(\F_d(-1))=0$ for $i \in \{0,1\}$. Since $\F_d^* \cong \F_d(-1)$, Serre duality yields that $h^i(\F_d(-1))=h^{3-i}(\F_d^*)=h^{3-i}(\F_d(-1))=0$ for $i \in \{2,3\}$ as well, proving (iv).

To prove (vi), tensor  \eqref{eq:HS1} by
  $\F_{d}^*\cong \F_{d}(-1)$ to obtain
\[
    \xymatrix{
      0 \ar[r] & \F_{d}(-1) \ar[r] & \F_{d} \* \F_{d}^* \ar[r] & \F_{d} \* \I_{E_{d}/X} \ar[r] & 0.}
 \]
    By (iv) we get
    $h^i(\F_{d} \* \F_{d}^*)=h^i(\F_d \* \I_{E_d/X})$ for all $i$.
Now (vi) follows from 
\eqref{eq:IS1} tensored by $\F_d$, properties (i) and (v) and Lemma \ref {lemma:FM}(i).
\end{proof}

\begin{remark} \label{rem:elln}
  By \eqref{eq:IS1} twisted by $\O_X(1)$ and linear normality of $X$ we see that $E_d$ spans a $\PP^{d-1-h^1(\I_{E_d/X}(1))}$ in its embedding in $\PP^{g+1}$. Hence, $E_d$ is projectively normal if and only if $h^1(\I_{E_d/X}(1))=0$
  (which can only happen for $d \leq g+2$) and nondegenerate if and only if
  $h^1(\I_{E_d/X}(1))=d-g-2$  (which can only happen for $d \geq g+2$).
\end{remark}

\begin{lemma} \label{lemma:normalb}
  Let $[E_d] \in \HH_d(X)$. Then the following conditions are equivalent:
  \begin{itemize}
  \item[(i)] $h^1(\N_{E_d/X}) > 0$,
  \item[(ii)] $h^1(\N_{E_d/X}) =1$,
    \item[(iii)] $h^0(\N_{E_d/X}) > d$,
  \item[(iv)] $\N_{E_d/X} \cong \O_{E_d} \+ \O_{E_d}(1)$,
  \item[(v)] there is a section of $\N_{{E_d}/X}$ vanishing along a scheme of length $d$. 
 \end{itemize}
\end{lemma}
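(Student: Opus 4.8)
The plan is to prove the equivalences for $[E_d]\in\HH_d(X)$ by exploiting the description $\N_{E_d/X}\cong \F_d|_{E_d}$ from Lemma \ref{lemma:h3h2}(i), together with the fact that $E_d$ is a smooth irreducible curve of genus $1$ and degree $d$. Since $\N_{E_d/X}$ is a rank--two vector bundle on the elliptic curve $E_d$ with $\det(\N_{E_d/X})=\O_X(1)|_{E_d}$, a line bundle of degree $d$, we have $\deg(\N_{E_d/X})=d$ and hence $\chi(\N_{E_d/X})=d$, consistent with Lemma \ref{lemma:h3h2}(i). By Riemann--Roch on $E_d$ and Serre duality (using $\omega_{E_d}\cong\O_{E_d}$), for any line subbundle or quotient the cohomology is governed purely by degrees; in particular $h^1(\N_{E_d/X})=h^0(\N_{E_d/X})-d=h^0(\N_{E_d/X}^*\otimes\omega_{E_d})=h^0(\N_{E_d/X}^*)$, so that (i)$\iff$(iii) and $h^0(\N_{E_d/X})=d+h^1(\N_{E_d/X})$, which immediately gives (iii)$\iff$(i) and reduces the whole problem to understanding when $\N_{E_d/X}$ has nonzero $H^1$ and in that case pinning down its structure.

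First I would observe that a rank--two bundle $\N$ of degree $d>0$ on an elliptic curve satisfies $h^0(\N)>0$ always (by Riemann--Roch $\chi(\N)=d>0$ and $h^0\ge\chi$), and that $h^1(\N)\ne 0$ forces the existence of a nontrivial section of $\N^*$, i.e. an injection $\O_{E_d}\hookrightarrow\N^*$, equivalently a surjection $\N\twoheadrightarrow\O_{E_d}(\mathfrak{d})$ onto some effective line bundle quotient, equivalently (dualizing) a line subbundle $L\subseteq\N$ of degree $\le 0$. Then I would use the numerical constraint: if $L\subseteq\N$ with $\deg L=e$, the quotient $\N/L$ has degree $d-e$, and by the classification of rank--two bundles on elliptic curves (Atiyah) combined with stability/semistability considerations, the presence of a destabilizing sub-line-bundle of degree $\le 0$ on a degree--$d$ bundle is quite restrictive. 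The key point is to show that $h^1(\N_{E_d/X})\le 1$ with equality exactly when $\N_{E_d/X}\cong\O_{E_d}\oplus\O_{E_d}(1)$ with $\deg(\O_{E_d}(1))=1$; here I expect to need that $d\ge 2$, i.e. that $E_d$ is not a conic — but since $E_d$ is a smooth elliptic curve it is nondegenerate or at worst plane, and a smooth plane cubic has $d=3$, while there are no smooth elliptic curves of degree $\le 2$, so $d\ge 3$ automatically (in any case $d\ge\lfloor(g+3)/2\rfloor\ge 3$).

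The heart of the argument is the chain (iv)$\Rightarrow$(ii)$\Rightarrow$(i)$\Rightarrow$(iv) and (iv)$\iff$(v). For (iv)$\Rightarrow$(ii): if $\N_{E_d/X}\cong\O_{E_d}\oplus L_1$ with $\deg L_1=d$ (so $L_1=\O_{E_d}(1)$ when we normalize the second summand to be $\O_{E_d}$ — wait, more carefully, $\N\cong\O_{E_d}\oplus L$ with $\deg L=d$ but the statement writes $\O_{E_d}\oplus\O_{E_d}(1)$ meaning after a twist; I will phrase it so that $\N_{E_d/X}\otimes L'\cong \O_{E_d}\oplus\O_{E_d}(p)$ appropriately, matching the paper's convention where $\O_{E_d}(1)$ denotes $\O_X(1)|_{E_d}$, so in fact the statement means $\N_{E_d/X}\cong L_0\oplus\O_X(1)|_{E_d}$ with $L_0$ of degree $0$; then $h^1(\N)=h^1(L_0)+h^1(\O_X(1)|_{E_d})=h^1(L_0)$). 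Hmm, this needs care: $h^1(L_0)$ is $1$ if $L_0\cong\O_{E_d}$ and $0$ otherwise. So I would need to argue $L_0\cong\O_{E_d}$, which should follow from the specific geometry, namely that $E_d$ moves in a pencil of curves on a K3 surface section (cf. Remark \ref{rem:local}) forcing the relevant line bundle to be trivial, or more directly from the fact that $\F_d$ has a section. I think the cleanest route is: (i)$\Rightarrow$ there is $0\ne s\in H^0(\N_{E_d/X}^*)$, giving $0\to\O_{E_d}\to\N_{E_d/X}^*\to Q\to 0$ with $\deg Q=-d<0$, so $Q=T\oplus(\text{torsion})$ or $Q$ a line bundle of negative degree; taking duals and using that a rank--two bundle on an elliptic curve with a sub $\O_{E_d}$ and the degree bookkeeping, I get $\N_{E_d/X}\cong\O_{E_d}\oplus\O_X(1)|_{E_d}$ once I rule out the non-split and the "torsion" cases by a semicontinuity/genericity or a direct $\Ext^1$ computation $\Ext^1(\O_X(1)|_{E_d},\O_{E_d})=H^1(\O_X(-1)|_{E_d})=0$ (degree $-d<0$). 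This last vanishing is exactly what splits the extension, which is the crisp observation I'd build the proof around; and (v) is just the concrete incarnation of the quotient $\N_{E_d/X}\twoheadrightarrow\O_X(1)|_{E_d}$ together with a general section of $\O_X(1)|_{E_d}$ vanishing at $d$ points, pulled back.

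The main obstacle I anticipate is not the elliptic-curve bundle theory — that part is essentially Atiyah plus Riemann--Roch — but rather being careful with the twisting conventions (the paper writes $\O_{E_d}(1)$ for $\O_X(1)|_{E_d}$, which has degree $d$, not degree $1$), and making sure the equivalence (i)$\iff$(iv) genuinely holds rather than merely "(i) implies $\N$ is an extension of $\O_X(1)|_{E_d}$ by $\O_{E_d}$". The resolution is the $\Ext^1$ vanishing above, so there is really no obstacle of substance; the only thing to watch is that when $d$ could be small the vanishing $h^1(\O_X(-1)|_{E_d})=0$ still holds, which it does as long as $\deg=-d<0$, i.e. always. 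I would therefore structure the proof as: (a) reduce everything to $\N:=\N_{E_d/X}\cong\F_d|_{E_d}$, a rank--two degree--$d$ bundle on the elliptic curve $E_d$; (b) Riemann--Roch/Serre duality on $E_d$ gives (i)$\iff$(ii)$\iff$(iii) once we show $h^1(\N)\le 1$; (c) $h^1(\N)\ne 0$ $\iff$ $h^0(\N^*)\ne 0$ $\iff$ $\N$ admits a quotient line bundle of degree $0$ $\iff$ (by the $\Ext^1$-vanishing) $\N$ splits as $\O_X(1)|_{E_d}\oplus(\text{degree }0)$ — and then the degree-$0$ summand must be $\O_{E_d}$ else $h^1=0$, giving (iv) and simultaneously $h^1(\N)=1$; (d) (iv)$\iff$(v) by taking/pulling back a general section of the $\O_X(1)|_{E_d}$ summand.
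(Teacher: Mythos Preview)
Your approach is correct and essentially the same as the paper's. Both deduce (i)$\iff$(iii) from $\chi(\N_{E_d/X})=d$, exhibit a destabilizing line sub-bundle of $\N_{E_d/X}$ (the paper by saturating a section of $\N_{E_d/X}$ vanishing at a chosen point $x$, so that condition (v) appears naturally as the intermediate step in the chain (i)$\Rightarrow$(v)$\Rightarrow$(iv)$\Rightarrow$(ii)$\Rightarrow$(i); you, dually, via Serre duality $h^1(\N)=h^0(\N^*)$ and a section of $\N^*$, heading straight for (iv) and handling (v) separately), and then split the resulting short exact sequence using the vanishing $\Ext^1(\O_{E_d},\O_{E_d}(1))=H^1(\O_{E_d}(1))=0$ on the elliptic curve---precisely the key observation you singled out.
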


\begin{proof}
  We have $h^0(\N_{{E_d}/X}) = \chi(\N_{{E_d}/X})+h^1(\N_{{E_d}/X})=d+h^1(\N_{{E_d}/X})$ by Lemma \ref{lemma:h3h2}(i), proving that  (i) and (iii) are equivalent. Moreover, for any $x \in {E_d}$, we have $h^0(\N_{{E_d}/X}(-x)) \geq d-2 >0$, whence there is a nonzero section  $s: \O_{E_d} \to \N_{{E_d}/X}$ vanishing at $x$. Letting $Z$ denote its scheme of zeros, and saturating, we obtain 
  \begin{equation} \label{eq:sticaz}
    \xymatrix{
0 \ar[r] & \O_{E_d}(Z)  \ar[r] & \N_{{E_d}/X} \ar[r] & \O_{E_d}(1)(-Z) \ar[r] & 0.
      }
  \end{equation}
If $\deg(Z) <d$, we thus have $h^1(\N_{{E_d}/X})=0$. This proves that (i) implies (v).

Assume (v) and let $Z$ be the length $d$ scheme along which a section $s$ of
$\N_{{E_d}/X}$ vanishes. Then $Z \in |\O_{E_d}(1)|$ and \eqref{eq:sticaz} becomes
\[
    \xymatrix{
0 \ar[r] & \O_{E_d}(1) \ar[r] & \N_{{E_d}/X} \ar[r] & \O_{E_d} \ar[r] & 0,
      }
\]
which must split, as $\dim (\Ext^1(\O_{E_d},\O_{E_d}(1)))=h^1(\O_{E_d}(1))=0$. This proves that (v) implies (iv). Clearly, (iv) implies (ii), and (ii) implies (i). 
\end{proof}

We denote by  $\SS_d(X)$ the locus of hyperplane sections of $X$ of the form $Z(s_1 \wedge s_2)$ for (non--proportional) $s_1,s_2 \in H^0(\F_d)$ for some $[\F_d] \in \MM_d(X)$. Note that each member of $\SS_d(X)$ contains a pencil of elements from  $\overline{\HH}_d(X)$ by Proposition \ref{prop:sez}(iii). The following result is crucial in the proof of Theorem \ref{thm:ellipt}.

\begin{prop} \label{prop:g4}
  If $d \leq g+1$, then
    \begin{itemize}
    \item[(i)]  the map $p$ is surjective and $\MM_d(X)$, $\overline{\HH}_d(X)$ and $\SS_d(X)$ are equidimensional of dimensions 
\[ \dim(\MM_d(X))= 2d-g-2, \;\; \dim(\overline{\HH}_d(X))=d \;\; \mbox{and} \;\; \dim(\SS_d(X))=g;\]
 \item[(ii)] for general $E_d$ in any component of $\overline{\HH}_d(X)$ and general $\F_d$ in any component of $\MM_d(X)$ we have
  \[ h^0(\F_d)=g+3-d, \; \;  h^1(\F_d)=0 \;\; \mbox{and}\] 
	\[h^0(\I_{E_d/X}(1))=g+2-d, \; \; h^1(\I_{E_d/X}(1))=0;\] 
\item[(iii)] the general $S$ in any component of $\SS_d(X)$ has at worst a rational double point, which is  a general point of $X$, and contains a pencil of members of $\overline{\HH}_d(X)$, all  zero loci of non--zero  sections of the same element of $\MM_d(X)$, having at most a single base point at the rational double point of $S$;
  \item[(iv)] for any component $\HH_d(X)'$ of $\overline{\HH}_d(X)$, the locus
\[ \{ x \in R_1(X) \; | \; x \in E_d \; \; \mbox{for some  irreducible $E_d$ in    $\HH_d(X)'$   not contained in $R_1(X)$} \} \]
 is dense in every component of $R_1(X)$;
\item[(v)] the general $E_d$   in any component of   $\overline{\HH}_d(X)$ intersects $R_1(X)$ transversally, such that through each intersection point there passes a unique line $\ell$  contained in $X$  (hence this line intersects $E_d$ transversally in one point),  and with the property that there is a hyperplane section of $X$ containing $E_d$ but not $\ell$; 
\item[(vi)] the general $E_d$ in any {\it reduced} component of $\HH_d(X)$ intersects $R_1(X)$ (transversally) in at least two points through which there pass two non-intersecting lines.
\end{itemize}
\end{prop}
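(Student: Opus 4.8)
The plan is to build on parts (iv) and (v), which already give a dense supply of lines meeting a general $E_d$ transversally in one point and a hyperplane section containing $E_d$ but not that line. First I would observe that in a \emph{reduced} component $\HH_d(X)'$ the general $E_d$ is an honest smooth point, so by part (v) combined with the fact that $R_1(X)$ is a surface swept out by a one-dimensional family $\LL(X)$ of lines, a general $E_d$ meets $R_1(X)$ in $E_d \cdot R_1(X) = \deg(R_1(X)\cap \text{hyperplane}) \cdot$-type count $\geq 2$ points (one checks $E_d\cdot R_1(X)>1$ using $d \geq \lfloor \frac{g+3}{2}\rfloor \geq 2$ and that $R_1(X)$ has degree $\geq 2$ in $\PP^{g+1}$; more robustly, one argues that if $E_d$ met $R_1(X)$ in a single point for general $E_d$, then by the density statement (iv) every point of $R_1(X)$ would be such a unique intersection point, forcing a section of $\N_{E_d/X}$ or the monodromy to behave in a way incompatible with reducedness and with $h^1(\N_{E_d/X})=0$). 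So the first real step is: \emph{for general $E_d$ in a reduced component, $E_d$ meets $R_1(X)$ transversally in $\geq 2$ points.}

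The second and main step is to arrange that among these intersection points there are two through which the associated lines do \emph{not} meet each other. Here I would invoke Proposition \ref{prop:compR}. In case (I), $R_1(X)$ has a component $R$ that is not a cone, so the lines through a general point of $R$ vary and two general lines of $R$ are disjoint; combined with (iv) (the intersection points with $E_d$ are dense in every component of $R_1(X)$, in particular in $R$) and a dimension count, a general $E_d$ will meet $R$ in at least two points whose lines are among such disjoint pairs — one only needs that the "bad locus" of points of $R$ lying on a line that meets a fixed line is a curve, hence does not contain the generic pair of intersection points of $E_d$ with $R$. In case (II), which occurs only for $g=3$, $R_1(X)$ has $40$ components, all cones, and general pairs of lines \emph{in different components} are disjoint; so it suffices to show that a general $E_d$ meets at least two distinct components of $R_1(X)$, which again follows from the density statement (iv) applied to each component together with the fact that a single irreducible curve $E_d$ cannot be concentrated, for all members of a family, inside the lines through the vertex of one fixed cone (that would force $E_d$ to lie in a plane or violate $\Pic(X)=\ZZ[\O_X(1)]$ for $g=3$ quartics).

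The third step is to upgrade "two points with disjoint lines" to the transversality clause: this is immediate from part (v), which already says the general $E_d$ meets $R_1(X)$ transversally and that through each such point passes a unique line of $X$. So the intersection with each chosen line is transverse in a single point, as required.

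The hardest part will be the \emph{second} step in case (I): ensuring that the two (or more) intersection points of $E_d$ with the non-cone component $R$ are not both confined to the subvariety of $R$ parametrizing lines through a fixed base — i.e. genuinely ruling out, via the density input (iv) and a count of dimensions of the relevant incidence varieties, the possibility that every pair of lines "seen" by $E_d$ happens to be a pair of incident lines. I expect this to require a careful analysis of the incidence correspondence $\{(x,y)\in R\times R : \ell_x \cap \ell_y \neq \emptyset\}$ and the observation that its image under "remembering $x$" is not all of $R$ unless $R$ is a cone — precisely the dichotomy recorded in Proposition \ref{prop:compR} — so that a general secant configuration of $E_d$ avoids it. The reducedness hypothesis enters to guarantee that "general $E_d$" in the component behaves generically with respect to these auxiliary loci (otherwise, as the $(g,d)=(4,3)$ nonreduced case shows, one cannot move $E_d$ freely).
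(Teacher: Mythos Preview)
Your case split via Proposition~\ref{prop:compR} is correct, and case~(II) goes through essentially as you say (this is also what the paper does, in one line). In case~(I) the paper likewise begins by noting that the chosen non-cone component $R$ of $R_1(X)$ lies in $|\O_X(m)|$ for some $m \geq 1$, so that $E_d \cap R$ consists of $md \geq 3$ transversal points by~(v). But your plan for the decisive step contains a genuine error: the claim that the projection of $\{(x,y)\in R\times R:\ell_x\cap\ell_y\neq\emptyset\}$ to the first factor misses the general point of $R$ whenever $R$ is not a cone is false. What \emph{is} true (and what the paper uses) is only that a fixed line meets finitely many others; the bad locus $\Z_d\subset\Sym^2(R)^{\circ}$ is therefore a proper subvariety, but its projection to either factor can perfectly well be all of $R$ (imagine a ruling whose image in $\PP^{g+1}$ identifies fibers in pairs along a double curve). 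And even granting that $\Z_d$ is proper, a dimension count alone does \emph{not} exclude $\im(q_d)\subset\Z_d$ for the incidence $q_d\colon J_d=\{(E_d,x+y):x,y\in E_d\cap R\}\to\Sym^2(R)^{\circ}$, since $q_d$ has no reason to be generically finite and $\dim J_d=d$ can be small.

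The paper's actual argument in case~(I) is of a different nature and is precisely where the reducedness hypothesis bites; this mechanism is absent from your plan. Assuming for contradiction that $\im(q_d)\subset\Z_d$, fiber dimensions give $h^0(\N_{E_d/X}(-x-y))\geq d-2$ for all $x,y\in E_d\cap R$. Separately, the single-point incidence $I_d=\{(E_d,x):x\in E_d\cap R\}$ is \emph{reduced} (because the component $\HH'$ is) and dominates $R$ by~(iv), so its general fiber is reduced of dimension $d-2$; reducedness is exactly what forces the \emph{equality} $h^0(\N_{E_d/X}(-x))=d-2$, rather than a mere inequality. Comparing, one gets $H^0(\N_{E_d/X}(-x))=H^0(\N_{E_d/X}(-x-y))$ for all such pairs, hence a single nonzero section of $\N_{E_d/X}$ vanishing at all $md\geq d$ points of $E_d\cap R$; Lemma~\ref{lemma:normalb} then yields $h^0(\N_{E_d/X})>d=\dim\HH'$, contradicting reducedness of $\HH'$. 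So reducedness is not used, as you write, merely to ensure that ``general $E_d$'' behaves generically with respect to auxiliary loci, but twice in a sharp way: to pin down $h^0(\N_{E_d/X}(-x))$ exactly, and to deliver the final numerical contradiction via the normal bundle and Lemma~\ref{lemma:normalb}.
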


 \begin{proof} 
   (i)-(ii)   For any $[\F_d] \in \im(p)$ we have $h^2(\F_d)=0$ by
Lemma \ref{lemma:h3h2}(v), and this also holds for a general 
   $[\F_d] \in \MM_d(X)$ by semicontinuity. Using 
Lemma \ref{lemma:FM}(i) we thus get
      \begin{equation}
     \label{eq:dimhfd}
     h^0(\F_d)=\chi(\F_d)+h^1(\F_d)-h^2(\F_d)+h^3(\F_d)=g+3-d+h^1(\F_d) \geq g+3-d \geq 2,
     \end{equation}
     whence $p$ is surjective by Proposition \ref{prop:sez}(ii). For any component $\MM'$ of $\MM_d(X)$, one has, by standard deformation theory of vector bundles and Lemmas \ref{lemma:FM}(ii) and \ref{lemma:h3h2}(vi), 
\begin{eqnarray}
 \label{eq:dimM}   \dim(\MM') & \geq & h^1(\F_{d} \* \F_{d}^*)-h^2(\F_{d} \* \F_{d}^*) \\
 \nonumber  & = & -\chi(\F_{d} \* \F_{d}^*)+h^0(\F_{d} \* \F_{d}^*)-h^3(\F_{d} \* \F_{d}^*)=2d-g-2.
 \end{eqnarray}

  Let us pretend that $\MM_d(X)$ is a fine moduli space (if not one argues similarly at a local level). Then we can consider the variety $\GG$ with a morphism $\pi: \GG\to \MM_d(X)$, such that for each $[\F_d]\in \MM_d(X)$, the fiber of $\pi$ over $[\F_d]$ is $ {\rm  Grass}(2, H^0(\F_d))$, which has dimension at least $2(g+1-d)$ by \eqref{eq:dimhfd}. 
Together with \eqref{eq:dimM}, this implies
\begin{equation}\label{eq:probt}
   \dim(\GG') \geq (2d-g-2)+2(g+1-d)=g,
\end{equation} for any component $\GG'$ of $\GG$. We can define a universal version of the map $w$ as in \eqref{eq:mappawedge}: 
   \begin{eqnarray}\label{eq:mappawedge-univ}
  \mathfrak{w}:  \GG    & \longrightarrow & |\O_X(1)|, \\
   \nonumber      \left([\F_d],[V=\langle s_1,s_2\rangle]\right)  & \mapsto & Z(s_1 \wedge s_2).
\end{eqnarray}
 By definition of $\SS_d(X)$, the  image of the map $\mathfrak{w}$ is  $\SS_d(X)$.  Note that  every component of $\SS_d(X)$ has dimension at most $g$, as $\Pic(S)\cong \ZZ[\O_S(1)]$ for the general   $S \in |\O_X(1)|$.

We claim that

\begin{equation} \label{eq:wgf}
\mbox{$\mathfrak{w}$ is generically finite  on any component of $\GG$.}
\end{equation}
To prove this, we argue by contradiction and assume that $\mathfrak{w}$ is not generically finite. Let $[S]$ be general in the image of $\mathfrak{w}$. Then there is a positive--dimensional family of  pairs $\left([\F_d], [V=\langle s_1,s_2\rangle]\right)\in \mathbb G$  such that $S=Z(s_1 \wedge s_2)$ and $S$ has a positive--dimensional family of pencils $\PP(V)$ as in Proposition \ref {prop:sez}(iii). Accordingly, $S$ has a family  $\EE$  of dimension at least 2 of curves, the general one $E$ being  a smooth irreducible elliptic curve moving in a pencil.
 By Remark \ref {rem:local}, the strict transform $E'$ of $E$ on the minimal desingularization of $S$ satisfies $E'^2=0$, which  contradicts the fact that  $\dim(\EE) \geq 2$. We have thus proved \eqref{eq:wgf}.

From \eqref{eq:wgf} it follows that 
$\dim(\SS')=\dim(\GG')= g$ for any component $\SS'$ of $\SS_d(X)$ and $\GG'$ of $\GG$ (by \eqref {eq:probt}), and as a consequence, also that  for any component $\MM'$ of $\MM_d(X)$ one has $\dim(\MM')=2d-g-2$ and $h^0(\F_d)=g+3-d$ for general $[\F_d] \in \MM'$. Considering the map $p$ in \eqref{eq:mappap}, we see that it also follows that
\[
\dim(\HH')=\dim(\MM')+\dim(\PP(H^0(\F_d))=(2d-g-2)+(g+2-d)=d,
\]
 for any component
$\HH'$ of $\overline{\HH}_d(X)$,  
which finishes the proof of (i).   At the same time,
\eqref{eq:dimhfd} shows that $h^1(\F_d)=0$, and Lemma \ref{lemma:h3h2}(v)  yields that  $h^0(\I_{E_d/X}(1))=g+2-d$ and   $h^1(\I_{E_d/X}(1))=0$ for general $[E_d] \in \HH'$, which finishes the proof of (ii).

(iii) If the general member $S$ of   any component of   $\SS_d(X)$ is singular, then by (i) and Lemma \ref {lem:dual}(ii), it has a rational double point at a general point of $X$.   Then (iii) follows from Proposition \ref{prop:sez}(iii).

(iv)   Let $R$ be an irreducible component of $R_1(X)$ and $S$ be a general member of a component $\SS'$ of $\SS_d(X)$.  
Note that, since the possible rational double point of $S$ is a general point of $X$, it does not lie on $R$.
 
We  claim  that $C:=S \cap R$ is reduced. Suppose this is not the case; then the hyperplane $\Pi$ containing $S$ would be tangent to $R$ and, since $\dim(\SS')=g$, it would be the general hyperplane tangent to $R$ so the dual variety $R^*$ of $R$ would be a hypersurface. On the other hand, since the curve cut out by $\Pi$ on $R$ is non--reduced, this would imply  that  the general tangent hyperplane to $R$ is tangent along a curve,  whence the dual variety $R^*$ of $R$ would not be a hypersurface, a contradiction. 

  Next we claim that the family of curves $\{ S \cap R \; | \; [S] \in \SS', S \neq R\}$ covers $R$. Indeed, if not, we would have $\SS' \subseteq |\I_{C/X}(1)|$, so that both would be $g$--dimensional by (i). Since $\Pic(X) \cong \ZZ[\O_X(1)]$ we have $R \in |\O_X(m)|$ for some $m \in \ZZ^+$. Then 
\[
  \xymatrix{
0 \ar[r] & \I_{S/X}(1) \cong \O_X \ar[r] & \I_{C/X}(1) \ar[r] & \I_{C/S}(1) \cong \O_S(1-m) \ar[r] & 0 
    }
  \]
shows that $\dim|\I_{C/X}(1)|\leq 1$, a contradiction.  

  Since $E_d$ moves in a pencil on $S$ and has no base point on $C$ by (iii), we see that $E_d$ passes through the general point of $R$, proving (iv). 

(v) Since through the general point of $R$ there passes only one line, and $S$ contains at most finitely many lines (because its resolution of singularities is a K3 surface by (iii)), we can by (iv) assume that the locus
\[ C^{\circ}=\{x \in C \; | \; \mbox{$C$ is smooth at $x$ and there passes a unique line $\ell$ through $x$ and $\ell \not \subset S$}\} \]
is the whole $C$ but finitely many points. Since the possible base point of the pencil of curves $|E_d|$ on $S$ lies outside $R$, whence off $C$, the general member $E_d$ of the pencil intersects $C$ in points of  $C^{\circ}$ and transversally. Thus the intersection $E_d \cap R$ is transverse, and through each intersection point there passes a unique line $\ell$ that is not contained in $S$. Since $\ell \cdot S=\ell \cdot \O_X(1)=1$, we have that $\ell \cap E_d$ is transverse and consists of only one point. This proves (v). 

(vi)   If we are in case (II) of Proposition \ref{prop:compR}, the result follows from (v) and the fact that lines in different components of $R_1(X)$ do not intersect.

Assume therefore that we are in case (I) of Proposition \ref{prop:compR}, so that we can pick a component $R$ of $R_1(X)$ that is not a cone. By assumption there is a reduced component of $\HH_d(X)$, which we denote by $\HH'$, and we let $[E_d] \in \HH'$ be general.  
Since $\Pic(X) \cong \ZZ[\O_X(1)]$, we have $R \in |\O_X(m)|$ for some $m \geq 1$. By (v), the intersection $E_d \cap R$ is transversal for general $[E_d] \in \HH'$ and occurs at points through which there passes a unique line in $X$, which each intersects $E_d$ in only that point and transversally (in particular the lines are distinct). It follows that $E_d \cap R$ consists of $md \geq 3$ points.

  Denote by  $\HH^{\circ}$ the dense open subset of $\HH'$ parametrizing curves not contained in $R$ and by $\Sym^2(R)^{\circ}$ the dense open subset  
of  $\Sym^2(R)$  consisting of all elements $x+y$ such that $x \neq y$ and there passes a unique line in $R$ through both $x$ and $y$ and these lines are distinct. Consider the incidence variety
\[ J_d:=\{ [(E_d,x+y)] \in \HH^{\circ} \x  \Sym^2(R)^{\circ} \;|\; x,y \in E_d \cap R\},\]
 with projections $p_d:J_d \to  \HH^{\circ}$ and $q_d: J_d \to  \Sym^2(R)^{\circ}$.
By (v) we have
$J_d \neq \emptyset$ and 
\begin{equation}
  \label{eq:imma}
  \mbox{for general $[E_d] \in \HH^{\circ}$ and all $x,y \in E_d \cap R$ we have $x+y \in \Sym^2(R)^{\circ}$}.
\end{equation}
Since $\dim(\HH^{\circ})=d$ by (i) and the fibers of 
$p_d$ are finite, we have  $\dim(J_d)=d$. Moreover, $J_d$ is reduced, as $\HH^{\circ}$ is.

Consider the locally closed subset $\Z_d$ of elements $x+y \in \Sym^2(R)^{\circ}$ such that the unique lines $\ell_x$ and $\ell_y$ through $x$ and $y$, respectively (which are distinct by definition of $\Sym^2(R)^{\circ}$) intersect.  If through a point of $R$ there passes a unique line, this intersects only finitely many other lines on $R$ (because $R$ is not a cone). Hence either $\Z_d$ is empty or $\dim (\Z_d)=2$. In the former case we have finished. Thus assume that  $\dim (\Z_d)=2$.  We want to prove that the image of $q_d$ does not lie inside $\Z_d$.

We remark that for $x+y \in \im(q_d)$, the fiber $q_d^{-1}(x+y)$ is isomorphic to $\HH^{\circ}(x,y)$, the locus of curves in  $\HH^{\circ}$ passing through $x$ and $y$. The tangent space of the fiber at a  point $[(E_d,x+y)]$ is $H^0(\N_{E_d/X}(-x-y))$. If  $\im(q_d) \subset \Z_d$, we would therefore have:
\begin{equation}
  \label{eq:fibra2P}
  h^0(\N_{E_d/X}(-x-y)) \geq \dim(\HH^{\circ}(x,y)) \geq d-2>0 \; \; \mbox{for all} \; \; [E_d] \in \HH^{\circ}, \; x,y \in E_d \cap R
\end{equation}
(where we have used \eqref{eq:imma}).

Consider now the incidence variety
\[ I_d:=\{ [(E_d,x)] \in \HH^{\circ} \x  R \;|\; x \in E_d \cap R\},\] with projections $f_d: I_d \to \HH^{\circ}$ and $g_d: I_d \to R$.
Since the fibers of 
$f_d$ are finite, we have, as above,  $\dim(I_d)=d$  and $I_d$ is reduced.  By   (iv),  the map $g_d$ is dominant, whence
for general $x \in R$, the fiber $g_d^{-1}(x)$ is reduced and isomorphic to $\HH^{\circ}(x)$, the locus of curves in  $\HH^{\circ}$ passing through $x$, and has dimension $d-2$. The tangent space of the fiber at a general  point $[(E_d,x)]$ is $H^0(\N_{E_d/X}(-x))$, whence, since the general fiber is reduced, we have 
 \begin{equation}
  \label{eq:fibra2P'}
  h^0(\N_{E_d/X}(-x)) =\dim(\HH^{\circ}(x)) = d-2 \; \; \mbox{for general} \; \; [E_d] \in \HH^{\circ}, \; x \in E_d \cap R.
\end{equation} 
Comparing \eqref{eq:fibra2P} and \eqref{eq:fibra2P'}, we see that
\[ H^0(\N_{E_d/X}(-x))=H^0(\N_{E_d/X}(-x-y)) \; \; \mbox{for general} \; \; [E_d] \in \HH^{\circ}, \; x,y \in E_d \cap R,\]
whence, for general $ [E_d] \in \HH^{\circ}$, letting $x_1,\ldots,x_{md}$ denote the intersection points $E_d \cap R$,
we would have
\[h^0(\N_{E_d/X}(-x_i))=h^0(\N_{E_d/X}(-x_1-\cdots-x_{md})>0, \; \; \mbox{for any} \;\; i \in \{1,\ldots,md\}.\]
Therefore, $\N_{E_d/X}$ has a nonzero section vanishing at $md$ points, whence Lemma \ref{lemma:normalb} yields $h^0(\N_{E_d/X}) >d=\dim(\HH')$, contradicting the hypothesis that $\HH'$ is reduced.
\end{proof}

\begin{corollary} \label{cor:g4}
  The spaces $\overline{\HH}_d(X)$ and $\MM_d(X)$ are empty for $d < \lfloor \frac{g+3}{2} \rfloor$.
\end{corollary}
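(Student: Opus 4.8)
The plan is to read the result straight off the dimension formula in Proposition \ref{prop:g4}(i), using only that a nonempty scheme cannot contain a component of negative dimension.

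First I would dispose of the numerics. If $d < \big\lfloor \frac{g+3}{2}\big\rfloor$, then $d \le \big\lfloor \frac{g+3}{2}\big\rfloor - 1$, so
\[
2d \;\le\; 2\Big\lfloor \tfrac{g+3}{2}\Big\rfloor - 2 \;\le\; (g+3) - 2 \;=\; g+1 ;
\]
this yields simultaneously $d \le g+1$ and $2d - g - 2 \le -1 < 0$. Hence Proposition \ref{prop:g4}(i) is applicable, and it tells us that $\MM_d(X)$ is equidimensional of dimension $2d-g-2$. If $\MM_d(X)$ were nonempty it would therefore contain a component of negative dimension, which is absurd; so $\MM_d(X) = \emptyset$. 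Finally, the morphism $p\colon \overline{\HH}_d(X) \to \MM_d(X)$ of \eqref{eq:mappap} has empty target, whence $\overline{\HH}_d(X) = \emptyset$ as well.

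I do not anticipate any real obstacle, since all the substance is already contained in Proposition \ref{prop:g4}. The one point worth stressing is that the argument genuinely uses the \emph{equality} $\dim(\MM_d(X)) = 2d-g-2$, not merely the deformation-theoretic lower bound $\dim(\MM') \ge h^1(\F_d \otimes \F_d^{*}) - h^2(\F_d \otimes \F_d^{*}) = 2d-g-2$, which by itself is vacuously consistent with $\MM_d(X)$ being nonempty; that equality is precisely what the generic finiteness of $\mathfrak{w}$ (see \eqref{eq:wgf}), together with the bound $\dim(\SS_d(X)) \le g$ forced by $\Pic(S) \cong \ZZ[\O_S(1)]$ for general $S \in |\O_X(1)|$, supplies.
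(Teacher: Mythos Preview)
Your proof is correct and follows exactly the paper's approach: the paper's one-line proof simply says ``This follows as $2d-g-2 \geq 0$ by Proposition~\ref{prop:g4}(i)'', and you have unpacked this by verifying the numerics (including the applicability condition $d \leq g+1$) and spelling out the contradiction with equidimensionality. Your final paragraph, emphasizing that the argument hinges on the \emph{equality} $\dim(\MM_d(X))=2d-g-2$ established via the generic finiteness of $\mathfrak{w}$ rather than the mere deformation-theoretic lower bound, is a useful clarification that the paper leaves implicit.
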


\begin{proof}
  This follows as $2d-g-2 \geq 0$ by Proposition \ref{prop:g4}(i). 
\end{proof}

We finish the section with a result that we will use later:

\begin{lemma} \label{lemma:g34}
Let $d=g \in \{3,4\}$ and $[E_d]$ be  a general point  in a component of $\HH_d(X)$. Then
  $h^1(\N_{E_d/X})=0$.
\end{lemma}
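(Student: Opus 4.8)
The goal is to show that a general elliptic curve $E_d$ of degree $d=g\in\{3,4\}$ in a fixed component of $\HH_d(X)$ has $h^1(\N_{E_d/X})=0$. By Lemma \ref{lemma:normalb} this is equivalent to $\N_{E_d/X} \not\cong \O_{E_d}\+\O_{E_d}(1)$, and also to $h^0(\N_{E_d/X})=d$, i.e.\ to the statement that the component of $\HH_d(X)$ in question is generically smooth of the expected dimension $d$. Since $d=g$, we are in the range $d\leq g+1$ where Proposition \ref{prop:g4} applies, so we already know every component of $\overline{\HH}_d(X)$ has dimension exactly $d$; thus the assertion is that the expected dimension is actually achieved by the tangent space, equivalently that $\overline{\HH}_d(X)$ is reduced at the general point of every component. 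The plan is therefore to rule out the alternative $h^1(\N_{E_d/X})=1$, i.e.\ $\N_{E_d/X}\cong\O_{E_d}\+\O_{E_d}(1)$, for the two small values $g=3,4$ (so $d=3,4$) by exploiting the geometry of $E_d$ in its linear span.

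First I would pin down the embedded geometry of a general $E_d$ using Proposition \ref{prop:g4}(ii): for $[E_d]$ general in a component we have $h^1(\I_{E_d/X}(1))=0$ and $h^0(\I_{E_d/X}(1))=g+2-d=2$, so $E_d$ is projectively normal (Remark \ref{rem:elln}) and spans a $\PP^{d-1}$, i.e.\ $E_d\subset\PP^{g-1}\subset\PP^{g+1}$ is a linearly normal elliptic curve of degree $g=d$. For $g=3$ this is a plane cubic in a $\PP^2$; for $g=4$ it is an elliptic normal quartic in a $\PP^3$, a transverse intersection of two quadrics. In both cases the normal bundle sequence
\[
0 \longrightarrow \N_{E_d/\PP^{g-1}} \longrightarrow \N_{E_d/\PP^{g+1}} \longrightarrow \N_{\PP^{g-1}/\PP^{g+1}}\!\mid_{E_d} \longrightarrow 0
\]
and the sequence relating $\N_{E_d/X}$ to $\N_{E_d/\PP^{g+1}}$ and $\N_{X/\PP^{g+1}}\!\mid_{E_d}$ give concrete handles. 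The key point is that $\N_{E_d/\PP^{g-1}}$ is well understood for elliptic curves embedded by complete linear systems (it is a nonsplit extension with all summands of positive degree, hence has $h^1=0$), and $\N_{\PP^{g-1}/\PP^{g+1}}\!\mid_{E_d}\cong\O_{E_d}(1)^{\oplus 2}$ has $h^1=0$; so $h^1(\N_{E_d/\PP^{g+1}})=0$. The alternative $\N_{E_d/X}\cong\O_{E_d}\+\O_{E_d}(1)$ would force, via
\[
0 \longrightarrow \N_{E_d/X} \longrightarrow \N_{E_d/\PP^{g+1}} \longrightarrow \N_{X/\PP^{g+1}}\!\mid_{E_d} \longrightarrow 0,
\]
a trivial sub-line-bundle $\O_{E_d}\hookrightarrow\N_{E_d/\PP^{g+1}}$; I would show that $\N_{E_d/\PP^{g+1}}$ has no such trivial sub-bundle (its HN-slopes are all positive since $E_d$ is embedded by a complete very ample system of degree $\geq 3$), a contradiction. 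This settles it cleanly whenever a general $E_d$ in the component is linearly normal of degree $g$ in a $\PP^{g-1}$, and in particular is a smooth complete intersection there for $g=4$.

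The step I expect to be the main obstacle is the $g=4$ case together with the sub-case $(\star)_4$ where $X$ lies on a singular quadric: there Theorem \ref{thm:ellipt} itself flags $(g,d)=(4,3)$, not $(4,4)$, as the exceptional nonreduced case, so for $d=g=4$ one must still be careful that the component being considered really has general member linearly normal of degree $4$ with the stated cohomology, and that the restriction $\N_{X/\PP^{g+1}}\!\mid_{E_d}$ is computed correctly (for $g=4$, $X$ is a $(2,3)$ complete intersection, so $\N_{X/\PP^5}\cong\O_X(2)\+\O_X(3)$ and its restriction to $E_4$ has positive degrees, which again has $h^1=0$). Thus I would assemble the vanishing $h^1(\N_{E_d/X})=0$ from: (a) $h^1$ of the ambient normal bundle $\N_{E_d/\PP^{g+1}}$ vanishes; (b) $h^0(\N_{X/\PP^{g+1}}\!\mid_{E_d})$ is computed via Riemann--Roch so that the connecting map in the last displayed sequence, together with $h^1(\N_{E_d/\PP^{g+1}})=0$, forces $h^1(\N_{E_d/X})=0$; this is just a diagram chase once the degrees of the relevant positive line bundles on $E_d$ are in hand. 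The only genuinely delicate point is verifying that the general member of the component is as described in (a)--(b), which is exactly what Proposition \ref{prop:g4}(ii) provides for $d\leq g+1$, so no extra input beyond the excerpt is needed.
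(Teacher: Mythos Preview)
Your proposal contains a genuine gap at the crucial step. You assert that if $\N_{E_d/X}\cong\O_{E_d}\+\O_{E_d}(1)$, then the resulting saturated inclusion $\O_{E_d}\hookrightarrow\N_{E_d/\PP^{g+1}}$ leads to a contradiction because ``$\N_{E_d/\PP^{g+1}}$ has no such trivial sub--bundle (its HN--slopes are all positive)''. But positivity of all Harder--Narasimhan slopes does \emph{not} exclude trivial saturated sub--line bundles: it only excludes trivial \emph{quotient} line bundles. Concretely, for $g=d=3$ the curve $E_3$ is a plane cubic in $\PP^2\subset\PP^4$, and the sequence $0\to\N_{E_3/\PP^2}\to\N_{E_3/\PP^4}\to\O_{E_3}(1)^{\+2}\to 0$ splits (since $H^1(\O_{E_3}(2))=0$), so $\N_{E_3/\PP^4}\cong\O_{E_3}(3)\+\O_{E_3}(1)^{\+2}$. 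This bundle is globally generated of rank $3$ on a curve, hence a general section is nowhere vanishing and yields a saturated trivial sub--line bundle. So the contradiction you aim for simply does not arise.

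Your fallback plan (b) does not rescue the argument either. From
\[
0\longrightarrow \N_{E_d/X}\longrightarrow \N_{E_d/\PP^{g+1}}\longrightarrow \N_{X/\PP^{g+1}}|_{E_d}\longrightarrow 0
\]
and $h^1(\N_{E_d/\PP^{g+1}})=0$ one only gets that $h^1(\N_{E_d/X})=0$ is \emph{equivalent} to surjectivity of $H^0(\N_{E_d/\PP^{g+1}})\to H^0(\N_{X/\PP^{g+1}}|_{E_d})$. This surjectivity is not a Riemann--Roch consequence: the map depends on the defining equations of $X$, and the failure of exactly this type of surjectivity is what produces the nonreduced case $(\star)_4$ for $(g,d)=(4,3)$ in Lemma~\ref{lem:capocc}. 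So a ``diagram chase'' cannot settle it.

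The paper's proof proceeds very differently and is genuinely geometric. It exploits the Hartshorne--Serre bundle $\F_d$ associated with $E_d$: if some other section $s'\in H^0(\F_d)$ has zero locus $E'$ meeting $E_d$ transversally in strictly fewer than $d$ points, then restricting the short exact sequence for $s'$ to $E_d$ and saturating exhibits $\N_{E_d/X}\cong\F_d|_{E_d}$ as an extension of two line bundles of strictly positive degree, forcing $h^1(\N_{E_d/X})=0$. The bulk of the argument is then to manufacture such an $E'$: one takes two $2$--planes $V,V'\subset H^0(\F_d)$ (using $h^0(\F_d)\geq 3$), the corresponding surfaces $S=w([V])$, $S'=w([V'])$ in $\SS_d(X)$, and their reduced intersection curve $C=S\cap S'$; a suitable member of the pencil $\PP(V')$ through a point of $E_d\cap C$ is the required $E'$, and a degree count on $C\cdot\overline S$ rules out the bad case $E_d\cap E'$ of length $d$. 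None of this is visible from the ambient normal bundle sequences alone.
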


\begin{proof}  
Let $[\F_d] =p([E_d])$ (cf. \eqref{eq:mappap}).   Assume first that there is a  non--zero  section $s' \in H^0(\F_d)$  whose zero scheme $E'$ is  such that the intersection $E' \cap E_d$ is non--empty, transversal and has length $<d$.  A local \label{pageref:loccomp} computation\footnote{Indeed, if $A$ and $B$ are curves in a smooth threefold $X$ intersecting
transversally in one point $p$,  then one locally has $\I_{A/X} |_{B} \cong \O_{B} \+ \O_p$. To prove this, introduce coordinates $(x,y,z)$ around $p$, such that the ideal of $B$ is $(x,y)$ and the ideal of $A$ is $(y,z)$. Then $\I_{A/X}|_{B}$ is locally isomorphic to 
$$
(y,z) \otimes \frac {\mathbb C[[x,y,z]]}{(x,y)} \cong \frac { (y,z)}{(x,y)\cdot (y,z)},
$$
which we have to view as a $\mathbb C[[z]]$--module.  As such we see that $y$ generates its torsion (because $zy=0$), and $z$ generates its free part.}
shows that  $\I_{E'/X} \*_{\O_X} \O_{E_d}$ has torsion along
$Z:=E' \cap E_d$.
Restricting 
\[ \xymatrix{
    0 \ar[r] & \O_X \ar[r]^{s'} & \F_d \ar[r] & \I_{E'/X}(1) \ar[r] & 0} 
\]
to $E_d$ and saturating (and recalling Lemma \ref{lemma:h3h2}(i)) one  obtains
\[
  \xymatrix{
0 \ar[r] & \O_{E_d}(Z)   \ar[r] &  \F|_{E_d}\cong \N_{E_d/X} \ar[r] & \O_{E_d}(Z') \ar[r] & 0}
  \]
 with $\deg(Z')=d-\deg(Z)>0$. Therefore,
  $h^1(\N_{E_d/X})=0$ in this case. 

Let $s_d\in H^0(\F_d)$ be a section vanishing on $E_d$. 
Consider now two general two--dimensional subspaces $V,V' \subset H^0(\F_d)$, such that $V$ contains $s_d$ and $V'$ does  not. (Such subspaces exist because $h^0(\F_d)  \geq \chi(\F_d)=3$ by Lemmas \ref{lemma:FM}(i) and \ref {lemma:h3h2}(v)). Then $V$ and $V'$ define the elements $[S]=w([V])$ and  $[S']=w([V'])$ by \eqref{eq:mappawedge}, and $S$ and $S'$ contain the pencils $\PP(V)$ (containing $E_d$) and $\PP(V')$ whose general members are  smooth elliptic curves of degree $d$. By Proposition \ref{prop:g4}(iii), the pencils have at most one base point, and by the case we treated above,  we may assume that they are base point free. 

Let $C:=S \cap S'$.  We claim that $C$ is reduced. If $g=3$,  we may specialise $V'$ to a general 2--dimensional subspace containing $s_d$; in this case  $S \cap S'=E_3+\ell$, with $\ell$ a line, which is reduced. If $g=4$ and $C$ is not reduced, then the intersection of the two hyperplanes containing $S$ and $S'$ would be a $\PP^3$ in $\PP^5$ that is tangent to $X$ along a component of $C$, against  Zak's theorem on tangencies (see \cite[Thm. 3.2.3]{Rus}).  

Possibly replacing $E_d$ by a general member of $|E_d|$, the intersection $Z:=E_d \cap C$ is transversal and consists of $d$ distinct points, along the smooth locus of $C$.  Pick any $x \in Z$ and pick the unique curve $E'=Z(s')$, for $s' \in H^0(\F)$, in the pencil $\PP(V')$ passing through $x$. Then
$E_d$ and $E'$ can only intersect along $C$, and if the intersection consists of fewer than $d$ points, then the lemma is proved, by what we said above. Hence,
 we may assume that $E \cap E'=Z$. In particular, the surface $\overline{S}:=Z(s \wedge s')$ has $d$ nodes at $Z$. Since $C$ is smooth at $Z$, it cannot lie on $\overline{S}$, as it would be a Cartier divisor on it, because $C \subset S \cap \overline{S}$ and equality holds as both sides have degree $2g-2$. But then
$ 2g-2=C \cdot \overline{S} \geq 2d=2g$,
a contradiction, as desired.
\end{proof}

\section{Proof of Theorem \ref{thm:ellipt}} \label{sec:pf1}

We will use induction on $d$, starting with the case $d=d_g:=\left\lfloor \frac{g+3}{2}\right\rfloor$, recalling that there are no smooth elliptic curves of   degree $d$ in $X$ if $d<d_g$ by Corollary \ref{cor:g4}.

\subsection{The base case $d=d_g$}
The key result is the following:

\begin{prop} \label{prop:contieneE}
  Let  $X \subset \PP^{g+1}$ be a  prime Fano threefold of degree $2g-2$. Then there is a
 codimension--one family of hyperplane sections of $X$ containing smooth elliptic curves of degree $d_g$, whereas a general hyperplane section does not contain such curves.
\end{prop}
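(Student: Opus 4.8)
The strategy is to use Mukai's description of $K3$ hyperplane sections together with the surjectivity of the period map for $K3$ surfaces to produce a one-parameter family of "special" hyperplane sections $S$ that, unlike the general one, carry an elliptic curve of degree $d_g$. Concretely, I would first recall that a general hyperplane section $S$ of $X$ is a $K3$ surface of genus $g$ with $\Pic(S) \cong \ZZ[\O_S(1)]$, and then argue that the universal family of hyperplane sections of $X$ dominates (an open subset of) the moduli space of genus-$g$ polarized $K3$ surfaces: this is where Mukai's results on the realization of $K3$'s as linear sections and Noether--Lefschetz theory come in. The point is that hyperplane sections of $X$ move in a $g$-dimensional family (namely $|\O_X(1)|$ has dimension $g+1$ but we mod out by $\Aut$ or use the $g$-dimensional moduli count), so inside this family we may look for the codimension-one sublocus where the Picard number jumps.

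**Key steps.** First, set up the numerics: on a genus-$g$ $K3$ surface $S$ with hyperplane class $H$, $H^2 = 2g-2$, an elliptic curve $E$ of degree $d = d_g = \lfloor\frac{g+3}{2}\rfloor$ would satisfy $E^2 = 0$ and $E\cdot H = d$. One checks by Hodge-index / Riemann--Roch that a rank-two lattice $\langle H, E\rangle$ with this intersection form is even, has signature $(1,1)$, and (crucially, this is where the precise value $d_g = \lfloor\frac{g+3}{2}\rfloor$ enters) has the property that $H$ remains ample, i.e.\ there is no $(-2)$-curve getting in the way — the Hodge-index inequality $(E\cdot H)^2 \geq E^2 H^2 = 0$ is automatic, but one needs $d$ large enough that $|E|$ is genuinely an elliptic pencil on a surface where $H$ stays very ample; this forces $d \geq d_g$, matching Corollary~\ref{cor:g4}. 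Second, by surjectivity of the period map there exist $K3$ surfaces with exactly this rank-two Picard lattice, and the sublocus of the genus-$g$ moduli space where such a class appears is a Noether--Lefschetz divisor, i.e.\ of codimension one. Third — the technical heart — one must show that this Noether--Lefschetz divisor actually meets the image of the family of hyperplane sections of our fixed $X$ in codimension one: i.e.\ that the hyperplane sections of $X$ are "general enough" in moduli that they intersect this NL-divisor properly. This should follow from the fact that the family $|\O_X(1)|$ has dimension $g+1$ (so the moduli map to the $19$-dimensional moduli space has image of dimension $g$, or rather one uses that the period map restricted to this family is still dominant onto a subvariety transverse to the NL-divisor) — here one invokes Mukai again, or a Noether--Lefschetz-type argument for the specific threefold. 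Finally, given such a special $S \subset X$ containing a smooth elliptic $E$ of degree $d_g$, and given that the general $S$ has $\Pic \cong \ZZ[\O_S(1)]$ and hence contains no such curve, one concludes that the locus of hyperplane sections containing a degree-$d_g$ elliptic curve is exactly codimension one.

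**Main obstacle.** I expect the delicate point to be the third step: controlling the image of the hyperplane-section family of the \emph{fixed} threefold $X$ inside $K3$ moduli well enough to guarantee that it crosses the relevant Noether--Lefschetz divisor, and crosses it in the expected codimension (not contained in it, not missing it). For $g=3,4,5$ where $X$ is a complete intersection this can presumably be checked by hand, but for the higher genera $g\in\{6,\ldots,10,12\}$ one needs Mukai's structural description of prime Fano threefolds and the corresponding description of their $K3$ sections; the compatibility of the two period maps (for the threefold's hyperplane sections versus for abstract $K3$'s) is exactly the kind of "several technical details to be fixed" the introduction warns about. A secondary subtlety is ensuring the elliptic curve produced is \emph{smooth and irreducible} (not a degenerate or reducible member of the pencil $|E|$) and genuinely of degree $d_g$ (not a multiple fiber or a sub-pencil), which uses that on a $K3$ with Picard lattice $\langle H,E\rangle$ of this shape the class $E$ is base-point free with smooth general member, a standard but needed application of the theory of linear systems on $K3$ surfaces.
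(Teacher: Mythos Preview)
Your outline identifies the correct ingredients (surjectivity of the period map, Mukai's description of $K3$ sections, a Noether--Lefschetz divisor in $K3$ moduli), but there is a genuine gap at exactly the point you flag as the ``main obstacle,'' and your suggested fix does not work. You write that the family of hyperplane sections of the \emph{fixed} $X$ should ``dominate (an open subset of) the moduli space of genus-$g$ polarized $K3$ surfaces'' and then cross the NL-divisor transversally. But $|\O_X(1)|$ has dimension $g+1\leq 13$, while the moduli of polarized $K3$'s of genus $g$ is $19$-dimensional; the image of the period map for a single $X$ is a low-dimensional subvariety, and there is no a priori reason it should meet any prescribed divisor. Your proposed transversality argument is simply missing a mechanism: for a \emph{general} $X$ one could hope to argue by parameter count, but the proposition is asserted for \emph{every} $X$, and nothing in your sketch bridges that gap.

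The paper sidesteps this completely by working over the full Hilbert scheme $\mathfrak{F}_g$ of prime Fano threefolds rather than over a single $X$. One first uses \cite{kn} and Mukai to produce \emph{one} $BN$-general $K3$ surface $S$ containing a smooth elliptic $E$ of degree $d_g$ and sitting as a hyperplane section of \emph{some} Fano threefold. Then one sets up the incidence variety $\mathfrak{I}_g=\{(S,X):S\subset X\}$ with projections $\alpha$ to $K3$'s and $\beta$ to Fano threefolds, observes that $\beta$ has equidimensional fibers $\cong\PP^{g+1}$, and uses semicontinuity of fiber dimension for $\alpha$ to bound $\dim\alpha^{-1}(\mathfrak{K}^*_g)$ from below (here $\mathfrak{K}^*_g$ is the NL-type divisor). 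A dimension count then forces $\beta|_{\alpha^{-1}(\mathfrak{K}^*_g)}$ to be \emph{surjective} onto $\mathfrak{F}_g$, which is exactly the statement that every $X$ has a codimension-one family of such hyperplane sections. This incidence-variety trick is the key idea you are missing. Finally, the paper also has to check that the limit curves in these possibly singular hyperplane sections are genuinely smooth elliptic of degree $d_g$ and not degenerate; this is done by writing $E=E'+R$ with $R$ rational, passing to the desingularization of $S$ (which has at worst a rational double point by Lemma~\ref{lem:dual}), and invoking Corollary~\ref{cor:g4} to rule out $\deg E'<d_g$.
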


\begin{proof}
Let $g \in \{3,\ldots,10,12\}$. By \cite[Thm.\;3.2]{kn} (which is a consequence of the surjectivity of the period map for $K3$ surfaces) there exists a smooth $K3$ surface
$S \subset \PP^g$ containing a  smooth elliptic curve $E$ (whence $E^2=0$) satisfying $\deg(E)=d_g$ and such that $(S,\O_S(1))$ is a so--called {\it $BN$--general polarized $K3$ surface}, as defined by Mukai
in \cite[Def. 3.8]{muk}\footnote{A polarized $K3$ surface $(S,H)$ of genus $g$ is by definition $BN$--general if $h^0(M)h^0(N) \leq g$ for all nontrivial $M,N \in \Pic(S)$ such that $H \sim M+N$. One may prove, using the famous result of Lazarsfeld \cite{laz}, that for $g \in \{3,\ldots,10,12\}$ this condition is equivalent to the fact that all smooth curves $C \in |H|$ are {\it Brill--Noether general}, that is, carry no line bundles $A$ with $\rho(A)<0$, where $\rho$ is the {\it Brill--Noether number}. One may also prove that the general smooth curve in $|H|$ is {\it Brill--Noether--Petri general}, that is, it has injective {\it Petri map} for all line bundles $A$ on $C$. We will however not need this. 
}. From \cite[Thms.\;4.4 and 5.5]{muk} it follows that $S$ can be realized as a hyperplane section of a { \it prime} Fano threefold $X$ of degree $2g-2$ in $\PP^{g+1}$. The general hyperplane section of $X$ does however not contain any smooth elliptic curves, as its Picard group is generated by its hyperplane section  (cf. \cite[Thm.\,3.33]{Vo}).

Consider now
\begin{itemize}
\item the Hilbert scheme $\mathfrak{F}_g$ parametrizing   (smooth)  prime Fano threefolds of degree $2g-2$ in $\PP^{g+1}$,
\item the closure $\mathfrak{K}_g$ of the Hilbert scheme  parametrizing smooth $BN$-general
  $K3$ surfaces of degree $2g-2$ in $\PP^{g+1}$, 
\item the closure $\mathfrak{E}_{g}$ of the Hilbert scheme parametrizing smooth elliptic curves of degree $d_g$ in $\PP^{g+1}$,
  \item the subscheme $\mathfrak{K}^*_g$ of $\mathfrak{K}_g$ parametrizing $[S] \in \mathfrak{K}_g$ such that $S$ contains a member of $\mathfrak{E}_{g}$,
\item the incidence variety $\mathfrak{I}_g$ parametrizing pairs $(S,X)$ such that $[S] \in  \mathfrak{K}_g$, $[X] \in \mathfrak{F}_g$ and $S \subset X$,
\item the natural projections  $\alpha:\mathfrak{I}_g \to \mathfrak{K}_g$  and $\beta:\mathfrak{I}_g \to \mathfrak{F}_g$.

\end{itemize}
It is well-known that both $\mathfrak{K}_g$ and $\mathfrak{F}_g$ are irreducible and that $\mathfrak{K}^*_g$ is a divisor in $\mathfrak{K}_g$. Set 
$k:=\dim (\mathfrak{K}_g)$ and $f:=\dim(\mathfrak{F}_g)$.

For any $[X] \in \mathfrak{F}_g$ the fiber $\beta^{-1}([X])$ is isomorphic to the complete linear system $|\O_X(1)| \cong \PP^{g+1}$. Hence $\dim(\mathfrak{I}_g)=
f+g+1$, whence the general fiber of $p$ has dimension $f+g+1-k$. Thus, special fibers have dimension at least $f+g+1-k$, whence $\dim ( \alpha^{-1}(\mathfrak{K}^*_g)) \geq (k-1)+(f+g+1-k)=f+g$.

By what we said above, $ \alpha^{-1}(\mathfrak{K}^*_g)$ is nonempty and all
fibers of $\beta|_{ \alpha^{-1}(\mathfrak{K}^*_g)}$ have dimension  at most $g$.  
    It follows that
    \[ \dim (\beta( \alpha^{-1}(\mathfrak{K}^*_g))  \geq  \dim ( \alpha^{-1}(\mathfrak{K}^*_g)) -g \geq f+g-g=f.\]
Therefore, $\beta|_{p^{-1}(\mathfrak{K}^*_g)}$ is surjective. 
It follows that {\it any} prime Fano threefold $X \subset \PP^{g+1}$ contains a codimension one family of hyperplane sections containing a curve $E$ that is a member of
$\mathfrak{E}_{g}$, whereas a general hyperplane section does not contain such a curve. 

Let  $(E,S)$ be a general pair consisting of a hyperplane section $S$ of $X$ and a curve $E$ in  $\mathfrak{E}_{g}$ contained in $S$. We may write $E=E'+R$, with $E'$ minimal with respect to the property that $p_a(E')=1$; thus $R$ must consist of smooth rational curves and $E'$ is of degree $d' \leq d_g$. By Lemma \ref{lem:dual}, the surface $S$ has at worst a rational double point (at a general point of $X$). By considering the desingularization of $S$, one sees that $E'$ is linearly equivalent to a smooth, irreducible elliptic curve on $S$. By  Corollary \ref{cor:g4}, we must have $d'=d_g$, so that $E=E'$.  This proves that $S$ contains a smooth elliptic curve of degree $d_g$.
\end{proof}

\begin{remark} A similar reasoning using \cite[Thm. 3.2]{kn} proves that a {\it general}  prime Fano threefold $X \subset \PP^{g+1}$ contains smooth elliptic curves of any degree $d \geq d_g$, but it fails to guarantee that {\it all} such threefolds contain such curves.
\end{remark}

\begin{remark} \label{rem:g=4}  Something can be said about the structure of the 
Hilbert schemes $\overline{\HH}_{d_g}(X)$. 

(i) Let  $X \subset \PP^{4}$ be a smooth quartic threefold. There is a map 
$\overline{\HH}_{3}(X)\to \LL(X)$, sending a cubic $E$ to the unique line residual intersection of $X$ with the plane spanned by  $E$. The fibers are isomorphic to $\PP^2$. 

 (ii) Let  $X \subset \PP^{5}$ be a smooth prime Fano threefold of degree  $6$. Then $X$ is contained in a unique quadric $Q_X$, which is either smooth or a cone with vertex a point, and is cut out by a cubic there. We have $\dim(\overline{\HH}_3(X))=3$ by Propositions \ref{prop:contieneE} and \ref{prop:g4}.  Any cubic $E$ in $\overline{\HH}_3(X)$ spans a plane, which is contained in $Q_X$, and distinct cubics  clearly span  distinct planes. If $Q_X$ is smooth, it contains two families of planes both parametrised by a $\PP^3$, whence $\overline{\HH}_3(X)$ has two irreducible components. If $Q_X$ is singular (that is, $X$ is as in $(\star)_4$), it contains only one family of planes, whence $\overline{\HH}_3(X)$ is irreducible.  

 (iii) Let  $X \subset \PP^{6}$ be a smooth prime Fano threefold of degree  $8$, which is the complete intersection of three quadrics.
 Any smooth elliptic normal quartic $E \subset X$ is a complete intersection  of two quadrics in its span $\langle E \rangle = \mathbb P^3$. Since $X$ is intersection 
of three quadrics in $\mathbb P^6$, then $\langle E \rangle$ is contained in one of the quadrics containing $X$, 
which is therefore singular.  Let $\mathcal H$ be the Hesse curve, namely 
the curve in the net of quadrics $|\mathcal I_{X/ \mathbb P^6}(2)|$ parametrizing singular quadrics. Then any point $h \in \mathcal H$ corresponds to a quadric $Q_h$ having at most a double line as singular locus, 
and it is easy to see that quadrics $Q_h$ having a double line are isolated in $\mathcal H$.  Therefore, the 
maximal dimension for linear spaces contained in $Q_h$, for any $h \in\mathcal H$, is $3$. This gives 
a $2:1$ cover $f : \widetilde{\mathcal H} \longrightarrow \mathcal H$ defined as follows: 
for $h \in \mathcal H$ general, $f^{-1}(h)$ consists of two points 
corresponding to the two distinct rulings of $\mathbb P^3$s contained in $Q_h$. 
Thus $f$ ramifies exactly at points $h \in \mathcal H$ corresponding to quadrics $Q_h$ with a double line. The map $f$ induces a 
map $g : \mathcal P \longrightarrow \widetilde{\mathcal H}$, where 
$\mathcal P$ is a $\mathbb P^3$--bundle over $\widetilde{\mathcal H}$ such that, for 
any $h' \in \widetilde{\mathcal H}$, the fiber $g^{-1}(h')$ parametrizes 
the ruling of $\mathbb P^3$s corresponding to $h'$ and which is contained in $Q_{f(h')}$. Any 
elliptic normal quartic $E \subset X$ determines a point in $\mathcal P$; conversely, any  
point in $\mathcal P$ gives rise to a $\mathbb P^3$ which cuts out on $X$ a normal elliptic 
quartic curve $E$ in $X$. Thus $\overline{\HH}_4(X)$ is isomorphic to $\mathcal P$. 

(iv) For $g\geq 6$ it is shown in  \cite[Thm.\;3.2]{bf} that  $\MM_{d_g}(X)$ is irreducible, whence also  $\overline{\HH}_{d_g}(X)$ is irreducible by Proposition \ref {prop:g4}(i). We will however not use this.

\end{remark}

\begin{lemma}\label{lemma:propF2}
Let  $X \subset \PP^{g+1}$ be a   prime Fano threefold of degree $2g-2$ not as in case $(\star)_4$. Let $[E_{d_g}] \in \HH_{d_g}(X)$ be general in any component of $\HH_{d_g}(X)$. Then $h^1(\N_{E_{d_g}/X})=0$.
     \end{lemma}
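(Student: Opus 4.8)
# Proof proposal for Lemma \ref{lemma:propF2}

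The plan is to exploit the pencil structure coming from Proposition \ref{prop:sez}(iii) together with the base-point-freeness established in Proposition \ref{prop:g4}(iii), in the same spirit as the proof of Lemma \ref{lemma:g34}, but now leveraging the slightly larger number of sections available. Let $[\F] = p([E_{d_g}])$ and let $s_0 \in H^0(\F)$ be a section vanishing on $E := E_{d_g}$. By Proposition \ref{prop:g4}(ii) we have $h^0(\F) = g+3-d_g$, and since $d_g = \lfloor \frac{g+3}{2}\rfloor$ one computes $h^0(\F) = g+3-d_g \geq d_g \geq \frac{g+3}{2}$; in particular $h^0(\F) \geq 2$ with equality only in small genus. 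The key point is that, as in the argument on page \pageref{pageref:loccomp}, if one can find a section $s' \in H^0(\F)$ whose zero locus $E'$ meets $E$ transversally in strictly fewer than $d_g$ points, then restricting the Hartshorne--Serre sequence for $s'$ to $E$ and saturating produces a sequence
\[
\xymatrix{0 \ar[r] & \O_E(Z) \ar[r] & \N_{E/X} \ar[r] & \O_E(Z') \ar[r] & 0}
\]
with $\deg(Z') = d_g - \deg(Z) > 0$, from which $h^1(\N_{E/X}) = 0$ follows immediately from the long exact cohomology sequence (using $h^1(\O_E(Z)) = 0$ since $\deg Z > 0$ on an elliptic curve, or vanishing of $h^1$ on both terms when $\deg > 0$).

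So the whole proof reduces to producing such an $E'$. First I would take a general $2$-dimensional subspace $V \subset H^0(\F)$ containing $s_0$ and a general $2$-dimensional subspace $V' \subset H^0(\F)$ not containing $s_0$ (these exist since $h^0(\F) \geq 3$, using $d_g \leq g$; the excluded case $(\star)_4$ precisely excludes $(g,d_g)=(4,3)$ where this might fail — wait, there $h^0(\F) = 4$, so actually the constraint is different and I should double-check that the hypothesis ``not as in case $(\star)_4$'' is used to guarantee reducedness of a certain intersection curve, not the existence of sections). These define surfaces $S = w([V])$ and $S' = w([V'])$ by \eqref{eq:mappawedge}, each carrying a base-point-free pencil of elliptic curves of degree $d_g$ by Proposition \ref{prop:g4}(iii) (after possibly replacing $E$ by a general member of its pencil on $S$, which does not change the isomorphism class of $E$ nor of $\N_{E/X}$). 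Set $C := S \cap S'$. The main technical step is to show $C$ is reduced: this is where the hypothesis that $X$ is not as in case $(\star)_4$ enters (for $g = 4$ a non-reduced $C$ would force a $\PP^3$ in $\PP^5$ tangent to $X$ along a component of $C$, contradicting Zak's theorem on tangencies \cite[Thm. 3.2.3]{Rus}; for $g = 3$ one specializes $V'$ so that $S \cap S' = E + \ell$; for $g \geq 5$ one argues similarly via Zak or via $\Pic(S) \cong \ZZ$). Then the intersection $Z := E \cap C$ is transversal of length $\deg(S')\cdot E / \deg S'$... more precisely $C$ has degree $2g-2$ and meets $E$ in $\leq 2d_g \leq g+3$ points.

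Now pick $x \in Z$ and let $E' = Z(s')$ be the unique member of the pencil $\PP(V')$ through $x$. Since $E$ and $E'$ are both contained in their respective surfaces and any common point lies on $C$, if $E \cap E'$ has fewer than $d_g$ points we are done by the restriction argument above. So assume $E \cap E' = Z$ has exactly $d_g$ points. Then the surface $\overline{S} := Z(s_0 \wedge s')$ has $d_g$ nodes along $Z$, and since $C$ is smooth at $Z$ it cannot be contained in $\overline S$ (it would be Cartier there, but $C \subset S \cap \overline S$ with both of degree $2g-2$ forcing equality $C = S \cap \overline S$, contradicting that $\overline S$ is singular along $Z \subset C$). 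Therefore $C \cdot \overline S \geq 2d_g$, i.e. $2g-2 \geq 2\lfloor\frac{g+3}{2}\rfloor$, which fails for all $g$ in the relevant range — contradiction. The main obstacle I anticipate is the reducedness of $C$ in the genus $g \geq 5$ cases, where one cannot use the explicit complete-intersection description as freely; there the cleanest route is to note that for general $V, V'$ the surfaces $S, S'$ are general in their components of $\SS_{d_g}(X)$, so $C$ is a general hyperplane-pair section and reducedness follows from $\Pic$ of a general such surface being $\ZZ[\O_S(1)]$ together with a dimension count, or again from Zak's theorem applied to the linear span; I would need to verify the genus range $d_g \leq g$ is what makes the section-existence and the numerical inequality both work, and isolate exactly why $(\star)_4$ is the only genuine exception.
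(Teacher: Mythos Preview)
Your proposal has a fatal numerical error at the very end, which causes the entire strategy to collapse for $g\geq 4$. You claim that the inequality $2g-2\geq 2d_g=2\lfloor\frac{g+3}{2}\rfloor$ ``fails for all $g$ in the relevant range''. It does not: for $g$ even it reads $2g-2\geq g+2$, i.e. $g\geq 4$, and for $g$ odd it reads $2g-2\geq g+3$, i.e. $g\geq 5$. So the inequality \emph{holds} for every $g\geq 4$ (with equality at $g=4,5$), and there is no contradiction. The argument you are mimicking from Lemma~\ref{lemma:g34} works there precisely because $d=g$, so $2d=2g>2g-2$; for $d=d_g\approx g/2$ the inequality goes the wrong way. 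Thus your approach proves nothing beyond $g=3$, which is already covered by Lemma~\ref{lemma:g34}.

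The paper's proof is accordingly quite different. For $g\geq 4$ it does not try to build a second curve $E'$ meeting $E$ in few points. Instead, on a general $S\in\SS'$ carrying the pencil $|E|$ (which one may assume base--point--free after disposing of the base--point case by exactly your restriction argument), one considers the residual divisor $\Delta:=\O_S(1)(-E)$. A direct cohomological computation on $S$ shows that $h^1(\N_{E/X})\neq 0$ if and only if some section of $\F$ vanishes exactly along $\Delta$; comparing degrees, $\deg(\Delta)=2g-2-d_g\leq d_g$ forces $g\leq 5$. This immediately settles all $g\geq 6$, and the remaining cases $g=4,5$ are handled by genus--specific geometric arguments exploiting the quadric(s) containing $X$: for $g=4$ one shows that $h^1(\N_{E/X})\neq 0$ would force the ambient quadric to be singular (i.e.\ case $(\star)_4$), and for $g=5$ one derives a contradiction from the net of quadrics cutting out $X$. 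In particular, the hypothesis ``not as in case $(\star)_4$'' is \emph{not} used to ensure reducedness of an intersection curve, as you guessed, but is exactly the obstruction that surfaces in the $g=4$ endgame.
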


     \begin{proof}
We first note that since $d_3=3$, the lemma is proved for $g=3$ by Lemma \ref{lemma:g34}. We may therefore assume that $g \geq 4$.

Let $\HH'$ be an irreducible component of $\HH_{d_g}(X)$ and let $\MM':=p(\HH')$ (cf. \eqref{eq:mappap} and recall that $p$ is surjective by Proposition \ref{prop:g4}(i)). 
Denote by  $\SS'$ the locus in $\SS_{d_g}(X)$ of hyperplane sections of $X$
of the form $Z(s_1 \wedge s_2)$ for (non--proportional) $s_1,s_2 \in H^0(\F_{d_g})$ for some $[\F_{d_g}] \in \MM'$. 
By Proposition \ref{prop:g4}(iii), the general member $S$ of $\SS'$ has at most a rational double point and contains a pencil of curves that are all zero loci of sections of a member $\F_{d_g}$ of $\MM'$ (which we may take to be general), having at most a single base point at the rational double point. We write the pencil as
$|E_{d_g}|$, where we may take $[E_{d_g}] \in \HH'$ to be general. We henceforth write $E=E_{d_g}$ and $\F=\F_{d_g}$ for simplicity.

Assume first that we are in the case where $|E|$ has a base point.
Let $E' \in |E|$ be a smooth curve different from $E$ and 
let $s' \in H^0(\F)$ be such that $E':=Z(s')$. Then $E$ and $E'$ intersect transversally in one point $x$. A local computation as in the footnote of page \pageref{pageref:loccomp} 
shows that  $\I_{E} \*_{\O_X} \O_{E'}$ has torsion along
$x$. Restricting 
\[ \xymatrix{
    0 \ar[r] & \O_X \ar[r]^{s'} & \F \ar[r] & \I_{E'/X}(1) \ar[r] & 0} 
\]
to $E$ and saturating (and recalling Lemma \ref{lemma:h3h2}(i)) one therefore obtains
\[
  \xymatrix{
0 \ar[r] & \O_{E}(x)   \ar[r] &  \F|_{E}\cong \N_{E/X} \ar[r] & \O_{E}(Z) \ar[r] & 0,}
  \]
  with $\deg(Z)=d_g-1>0$. Therefore,
  $h^1(\N_{E/X})=0$, as desired.

  For the rest of the proof we may therefore assume that $|E|$ is base point free on $S$, in particular, we may assume that $E$ does not pass through the possible rational double point of $S$. In particular, the divisor $\Delta:=\O_S(1)(-E)$ is Cartier on $S$. Since $h^0(\I_{E/X}(1)) \geq g+2-d_g \geq 3$ by Lemma \ref{lemma:h3h2}(iii),
the short exact sequence
  \[
 \xymatrix{
 0 \ar[r] & \O_X    \ar[r] &  \O_X(1) \* \I_{E/X} \ar[r] &  \O_S(\Delta) \ar[r] & 0}
     \]
     yields that $h^0(\Delta) \geq 2$, whence $\Delta$ can be represented by an effective nonzero divisor.

\begin{claim}
  $h^1(\N_{E/X})\neq 0$ if and only if there is a section of $\F$ whose zero locus is $\Delta$. This can only occur for $g=4$ or $5$. 
  \end{claim}

\begin{proof}[Proof of claim]
By Lemmas \ref{lemma:h3h2}(v) and \ref{prop:g4}(ii) we have $h^1(\F)=h^2(\F)=0$, whence 
  \[
 \xymatrix{
 0 \ar[r] & \F \* \I_{E/X}   \ar[r] &  \F \ar[r] &  \F|_{E} \cong \N_{E/X} \ar[r] & 0}
     \]
shows that $h^1(\N_{E/X})=h^2(\F \* \I_{E/X})$.
By Lemma \ref{lemma:h3h2}(iv) and 
 \[
\xymatrix{
0 \ar[r] & \F(-1)   \ar[r] &  \F \* \I_{E/X} \ar[r] &  \F|_S(-E) \ar[r] & 0,}
    \] together with Serre duality on $S$ (recalling that $S$, having only rational singularities, is Cohen--Macaulay),    we see that
    \begin{eqnarray*}
      h^2(\F \* \I_{E/X}) & = & h^2(\F|_S(-E))=h^0(\F^*|_S(E)) \\
      & = & h^0(\F|_S(-1)(E))=h^0(\F|_S(-\Delta)).
      \end{eqnarray*}
    By Lemma \ref{lemma:h3h2}(iv) once more and
 \[
\xymatrix{
0 \ar[r] & \F(-1)   \ar[r] &  \F \* \I_{\Delta/X} \ar[r] &  \F|_S(-\Delta) \ar[r] & 0}
    \]
    we get
$h^0(\F|_S(-\Delta))    
  = h^0(\F \* \I_{\Delta/X})$.
  To summarize, we have proved that $h^1(\N_{E/X})=h^0(\F \* \I_{\Delta/X})$.
These are nonzero if and only if
there is a section of $\F$ whose zero locus contains $\Delta$. If this happens, we must have $\deg(\Delta)=2g-2-d_g \leq \deg(E)=d_g$, which can only occur if $g \leq 5$, in which case $\deg(\Delta)=\deg(E)=d_g$, whence the zero locus of the section equals $\Delta$.
\end{proof}

Thus the lemma is proved except for the cases $g=4$ and $5$, which we now treat.

 One  computes $\Delta^2=0$, whence $p_a(\Delta)=1$, and $\deg(\Delta)=d_g$. As in the proof of Proposition \ref{prop:contieneE} we may write $\Delta=\Delta'+R$ with $\Delta'$ minimal with respect to
the property that $p_a(\Delta') = 1$, and one may again check that $\Delta'$ can be represented by a smooth curve. Thus, Corollary \ref{cor:g4} yields that 
$\Delta=\Delta'$; in other words, we may assume that $|\Delta|$ is a pencil whose general member is a smooth elliptic curve of degree $d_g$.

We now treat the cases $g=4$ and $g=5$ separately.

First we examine the case $g=4$, where $d_g=3$. We prove that if $h^1(\N_{E/X})\neq 0$, then $X\subset \PP^5$ is contained in a quadric cone. We argue by contradiction and assume that $X$ is contained in a smooth quadric $Q$. 
Recalling the map \eqref {eq:mappawedge}, we may write $[S]=w([V])$, where
$V$ is a general member of ${\rm Grass}(2,H^0(\F))={\rm Grass}(2,4)$. Then the pencil $|E|$, whose general member is a smooth plane cubic, equals $\PP(V)$. The surface $S$ is contained in a hyperplane $\pi_S$ of $\PP^5$ and it is there a complete intersection of type $(2,3)$. The unique quadric $Q_S$ containing $S$ is the intersection $Q_S=Q\cap \pi_S$. Since $S$ contains a pencil  of plane cubics, the quadric $Q_S$ contains a pencil of planes, thus it is singular. Since $Q$ is smooth, $Q_S$ must be of rank $4$, hence it is a cone with vertex a point, and therefore it contains a unique other pencil of planes that cut out on $S$ the pencil $|\Delta|$,  with $|E|$ and $|\Delta|$ distinct.  By the  claim, there is another $V'$ in ${\rm Grass}(2,H^0(\F))$ such that $w([V'])=S$ and $\PP(V')=|\Delta|$, and then $w^{-1}([S])=\{[V],[V']\}$. Let us now interpret the map $w$ in \eqref {eq:mappawedge} as a generically $2:1$ map
$$
w: {\rm Grass}(2,H^0(\F))\longrightarrow (\PP^5)^*
$$
We have that $w({\rm Grass}(2,H^0(\F)))= Q^*$. The above argument shows that $w$ cannot have any ramification point, since it would correspond to a hyperplane section of $Q$ of rank 3. This is absurd since $Q^*$, which is itself a smooth quadric in $\PP^5$, is simply connected. This gives a contradiction  that proves that $Q$ must be a quadric cone.

Next we consider the case $g=5$, where $ d_g  =4$. We assume that $h^1(\N_{E/X})\neq 0$ and want to reach a contradiction. As we saw above, the surface $S$   is endowed with two distinct pencils $|E|$ and $|\Delta|$
of elliptic quartic curves. Then $E\cdot \Delta=4$ and, by the claim, $E$ and $\Delta$ are both zero schemes of two sections $s$ and $s'$ of $\F$.  We can consider $W=\langle s,s'\rangle$ and $[S']=w([W])$; then $S'$ is a surface with 4 singular points at the intersection of $E$ and $\Delta$, which we can assume to be general in $|E|$ and $|\Delta|$. The four singular points in question, forming  a scheme $Z$,  lie in a plane $\alpha$, because on both $E$ and $\Delta$ they are a hyperplane section. Let us consider the projection $\pi$ of  $S'$ from $\alpha$ to a plane $\beta$. The surface $S'$ has a pencil $\PP(W)$ of elliptic quartic curves that contains both $E$ and $\Delta$. The curves of this pencil, all containing the scheme $Z$, are contracted to points by the projection $\pi$ and therefore the image of $S'$ under this projection is a curve. A general hyperplane section of $S'$ containing $E$ consists of $E$ plus another elliptic quartic curve $E'$ that, as $E$, passes through $Z$. Hence $E'$ is also contracted by $\pi$, thus $E'$ belongs to $\PP(W)$ as well as $E$. This implies that $\O_{S'}(1) \sim 2E \sim 2E'$, 
whence $\pi(S')$ is a conic. This proves that $S'$ sits on a quadric $Q'$ that is a cone with vertex the plane $\alpha$. The quadric  $Q'$ is the hyperplane section of a quadric $Q$ in $\PP^6$ containing $X$, and $Q$ must be singular with vertex a line. 

 As in the proof of Proposition \ref{prop:g4}\footnote{Again we pretend
  that $\MM_4(X)$ is a fine moduli space; if not we argue similarly at a local level.} let us consider the universal grassmannian  bundle $\GG\to \MM_4(X)$, whose points correspond to pairs $([\F], [V])$ with $[\F] \in  \MM_4(X)$ and $[V] \in {\rm Grass}(2,H^0(\F))$, and the map 
$
\mathfrak w: \GG\to |\O_X(1)|
$
as in \eqref{eq:mappawedge-univ}.
In the proof of Proposition \ref{prop:g4} we proved that $\dim(\GG)=g=5$ and that $\mathfrak{w}$ is generically finite. 

As in the case $g=4$, we have an involution $\iota: \mathbb G\dasharrow \mathbb G$, such that for a general $([\F], [V])\in \mathbb G$, one has $\iota([\F], [V])=([\F], [V'])$ and $\mathfrak w([\F], [V])=\mathfrak w ([\F], [V'])$, with $\PP(V)$ and $\PP(V')$ two pencils of elliptic quartic curves on the  surface defined by $\mathfrak w([\F], [V])=\mathfrak w ([\F], [V'])$. The fixed points of the involution $\iota$ form a non--zero divisor $D$ in $\mathbb G$. The image via the map $\mathfrak w$ of a generic point $([\F], [V])$ in a suitable component of $D$ is a surface $S$ with four singular points on a plane, with the unique pencil $\PP(V)$ of elliptic quartic curves passing through the four singular points. Such surfaces therefore lie, as we saw, on a quadric cone with vertex a plane. Accordingly, we have a family of quadrics containing $X$ that are cones with vertex a line. This family must have at least dimension 1, because a unique quadric in $\PP^6$ with vertex a line  gives rise only to a 3--dimensional family of hyperplane sections with vertex a plane. In conclusion, in the net $\mathcal N$ of quadrics that cut out $X$, there is at least a one--dimensional subfamily 
$\mathcal V$ of quadrics with vertex a line. Let $[Q] \in \mathcal V$ be a general point. Then  
$\mathcal N$ is spanned by the tangent line $L$ to $\mathcal V$ at $Q$ and by another general quadric $Q'$. Now the quadrics in the pencil $L$ all contain the singular line $\ell$ of $Q$, so they cut out a complete intersection singular along $\ell$. By intersecting with $Q'$ we see that $X$ must be singular at the intersection points of $Q'$ with $\ell$, a contradiction. 
\end{proof}

     \begin{lemma}\label{lem:capocc} Let  $X \subset \PP^{5}$ be a smooth prime Fano threefold of degree $6$  as in case $(\star)_4$. Then $\HH_3(X)$ is nonreduced with $4$--dimensional tangent space at every point. 
     \end{lemma}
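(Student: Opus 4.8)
The plan is to show two things for a general $[E_3] \in \HH_3(X)$ when $X$ is as in case $(\star)_4$: first, that $h^1(\N_{E_3/X}) = 1$, so that the tangent space $H^0(\N_{E_3/X})$ has dimension $\chi(\N_{E_3/X}) + h^1(\N_{E_3/X}) = 3 + 1 = 4$ by Lemma \ref{lemma:h3h2}(i), while the component of $\overline{\HH}_3(X)$ has dimension $3$ by Proposition \ref{prop:g4}(i); and second, that the same holds at \emph{every} point of $\HH_3(X)$, which will follow once we know $\overline{\HH}_3(X)$ is irreducible (Remark \ref{rem:g=4}(ii)) and that the locus where $h^1(\N_{E_3/X}) = 1$ is closed and, by the argument below, all of $\HH_3(X)$. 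Indeed, in case $(\star)_4$ the unique quadric $Q_X \supset X$ is a cone with vertex a point, so $Q_X$ contains a \emph{single} $\PP^3$-family of planes; every plane cubic $E_3 \subset X$ spans a plane lying in $Q_X$, and residual intersection of $X$ with that plane is a line, giving a morphism $\overline{\HH}_3(X) \to \LL(X)$ with fibers $\PP^2$ — this exhibits $\overline{\HH}_3(X) = \HH_3(X)$ as a $\PP^2$-bundle over (a curve inside) $\LL(X)$, hence smooth and irreducible of dimension $3$.

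The heart of the matter is therefore the computation that $h^1(\N_{E_3/X}) \neq 0$ for general $E_3$. Here I would revisit the proof of Lemma \ref{lemma:propF2}: the analysis there shows, via the Claim, that $h^1(\N_{E/X}) = h^0(\F \otimes \I_{\Delta/X})$ where $\Delta := \O_S(1)(-E)$ on a general $S = Z(s_1 \wedge s_2) \in \SS_3(X)$, and that this is nonzero precisely when there is a section of $\F$ vanishing on $\Delta$; moreover $\deg \Delta = 2g-2-d_g = 6 - 3 = 3 = \deg E$, so $\Delta$ moves in a second pencil of plane cubics on $S$. The case $g=4$ of that lemma established that $h^1(\N_{E/X}) \neq 0$ forces $X$ to be in case $(\star)_4$; what remains is the \emph{converse}: in case $(\star)_4$ one must \emph{produce} the second pencil $|\Delta|$ and the corresponding second $2$-plane $[V'] \in \mathrm{Grass}(2, H^0(\F))$ with $w([V']) = S$. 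The key geometric input is that the quadric section $Q_S = Q_X \cap \pi_S$ (where $\pi_S$ is the hyperplane spanned by $S$) is a rank-$4$ quadric cone in $\pi_S \cong \PP^4$ — since $Q_X$ is a cone over a point and the general hyperplane $\pi_S$ misses the vertex, $Q_S$ is a cone with vertex a point containing \emph{two} distinct pencils of planes, both cutting pencils of plane cubics on the complete intersection surface $S$ of type $(2,3)$ in $\pi_S$. One of these is $|E|$; call the other $|\Delta|$. Running the Hartshorne–Serre/Claim argument backwards, the existence of $|\Delta|$ gives the needed section of $\F$, hence $h^1(\N_{E/X}) = 1$ (the value is $1$ and not more by Lemma \ref{lemma:normalb}, which says $h^1(\N_{E_3/X}) \in \{0,1\}$).

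The main obstacle, I expect, is verifying cleanly that in case $(\star)_4$ the map $w \colon \mathrm{Grass}(2, H^0(\F)) \to |\O_X(1)|$ really is generically $2{:}1$ onto its image with the two sheets corresponding to the two rulings of planes in $Q_S$ — in particular that a general $S \in \SS_3(X)$ does \emph{not} acquire extra pencils (which would contradict the $E'^2 = 0$ argument of Remark \ref{rem:local} and the generic finiteness \eqref{eq:wgf}) and that the correspondence $|E| \leftrightarrow [V]$, $|\Delta| \leftrightarrow [V']$ is exactly as in the $g=4$ part of Lemma \ref{lemma:propF2}. One should also double-check that for \emph{every} $[E_3] \in \HH_3(X)$, not merely the general one, the base-point-free-pencil hypothesis used in the Claim can be arranged — but since $\HH_3(X)$ is a homogeneous-type $\PP^2$-bundle over $\LL(X)$ and $h^1(\N_{E_3/X})$ is upper-semicontinuous and bounded above by $1$, an open dense subset where it equals $1$ forces it to equal $1$ identically. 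Assembling these points gives that $\HH_3(X)$ is irreducible, nonreduced, with $4$-dimensional tangent space everywhere.
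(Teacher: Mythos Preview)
Your core idea --- run the Claim inside the proof of Lemma \ref{lemma:propF2} backwards, using the two rulings of planes on $Q_S$ --- is sound and takes a genuinely different route from the paper. The paper's proof is much shorter: from $0 \to \N_{E/X} \to \N_{E/Q_X} \to \O_E(3) \to 0$ it suffices to show $h^1(\N_{E/Q_X}) > 0$, and this holds because the Hilbert scheme of plane cubics in $Q_X$ is nonreduced --- it is the fibre over $[Q_X]$ of the incidence correspondence $I=\{(Q,C):C\subset Q\}$ over $|\O_{\PP^5}(2)|$, whose general fibre has two $12$-dimensional components (one per ruling of planes), while over a rank-$5$ cone the fibre collapses to a single $12$-dimensional component. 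This avoids the pencil/Claim machinery entirely; your approach has the virtue of supplying the exact converse to the $g=4$ argument in Lemma \ref{lemma:propF2}.

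Two concrete errors to fix. First, the residual-line map $\overline{\HH}_3(X) \to \LL(X)$ with $\PP^2$-fibres that you describe is the $g=3$ picture (Remark \ref{rem:g=4}(i)); for $g=4$ a plane $\Pi\subset Q_X$ meets $X=Q_X\cap Y$ in the full cubic $\Pi\cap Y$ with no residual line, and irreducibility comes instead from the single family of planes in the cone $Q_X$ as in Remark \ref{rem:g=4}(ii). (Also, do not write ``hence smooth'': that is precisely what you are about to contradict.) Second, your claim that ``the general hyperplane $\pi_S$ misses the vertex'' is backwards: if $\pi_S$ avoided the vertex $v$ of the rank-$5$ cone $Q_X$, then $Q_S$ would be a smooth quadric in $\PP^4$ and would contain no planes at all. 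In fact $\pi_S$ must pass through $v$ (exactly because $Q_S$ contains the planes spanned by the pencil $|E|$), and for general such $\pi_S$ the section $Q_S$ has rank $4$, giving the two rulings you need. Finally, to invoke the Claim you must know that $|\Delta|$ consists of zero loci of sections of the \emph{same} bundle $\F$; this follows since $\dim\MM_3(X)=2d-g-2=0$ and $\overline{\HH}_3(X)$ is irreducible, so $\MM_3(X)$ is a single point --- worth making explicit.
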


     \begin{proof} By Lemma \ref{lemma:normalb} and Remark \ref{rem:g=4}(ii) it suffices to prove that $h^1(\N_{E/X}) >0$ for general $[E] \in \HH_3(X)$.  By definition, $X$ is contained in a singular quadric $Q_X$ with vertex one point. From the exact sequence
$$
\xymatrix{
0\ar[r] & \N_{E/X}\ar[r] &\N_{E/Q_X}\ar[r] & \N_{{X/Q_X}|_E}\cong \O_E(3)\ar[r]& 0
}
$$we see that it suffices to prove that $h^1(\N_{E/Q_X})>0$. 

Consider the incidence variety
$$
I=\{(Q,C):  Q\in |\O_{\PP^5}(2)|,\,\,  C\subset Q,\,\, C \,\, \text{plane cubic}\}
$$
with the  projection $f: I\to |\O_{\PP^5}(2)|$. If $Q\in |\O_{\PP^5}(2)|$ is smooth, the fiber $f^{-1}(Q)$ consists of two irreducible components of dimension 12, and both are $\PP^9$--bundles over $\PP^3$. Indeed, a plane cubic $C$ sits in $Q$ if and only if the plane its spans lies in $Q$, and $Q$ contains two families of planes both parametrised by a $\PP^3$. 
If $Q$ is singular with vertex one point, then the fiber $f^{-1}(Q)$ consists of only one irreducible component of dimension 12, which is again a $\PP^9$--bundle over $\PP^3$, since $Q$ contains only one family of planes. Therefore $f^{-1}(Q_X)$ is non--reduced. Since it is isomorphic to the component of the Hilbert scheme of $Q_X$ parametrising plane cubics, we see that this component is non--reduced. Hence $h^1(\N_{E/Q_X})>0$, as wanted. 
\end{proof}

Proposition \ref{prop:contieneE} guarantees that $\HH_{d_g}(X) \neq \emptyset$ on any prime Fano threefold $X \subset \PP^{g+1}$, and Proposition \ref{prop:g4}(i) yields that each component has dimension $d_g$.
By Lemma \ref{lemma:propF2} and standard theory of Hilbert schemes, $\HH_{d_g}(X)$ is smooth at a general point of each component, whence reduced, except in case $(\star)_4$, where $\HH_3(X)$ is irreducible of dimension $3$ by Remark \ref{rem:g=4}(ii), with $4$--dimensional tangent space everywhere by
Lemma \ref{lem:capocc}. 

Finally, for general $[E_{d_g}] \in \HH_{d_g}(X)$ one has $h^1(\I_{E_{d_g}(X)/X}(1))=0$ by  Proposition \ref{prop:g4}(ii), whence $E_{d_g}(X)$ is an elliptic normal curve (cf. Remark \ref{rem:elln}).

    This concludes the proof of Theorem \ref{thm:ellipt} in the case $d=d_g$.

\subsection{The inductive step} Assume that $d_g \leq d \leq g+2$ and that
Theorem \ref{thm:ellipt} has been proved for the integer $d$. 
We will prove that it also holds for $d+1$, that is, that $\HH_{d+1}(X)$ has a reduced component of dimension $d$ containing points representing elliptic normal curves if $d+1 \leq g+2$ and nondegenerate curves if $d+1=g+3$. To do so, it will be sufficient, by standard theory of Hilbert schemes and Remark \ref{rem:elln}, to prove the existence of a smooth elliptic curve $E_{d+1}$ of degree $d+1$ in $X$ such that $h^1(\N_{E_{d+1}/X})=0$ and 
$$h^1(\I_{E_{d+1}/X}(1))=\begin{cases}
0, & \mbox{if} \; d+1 \leq g+2, \\
1, & \mbox{if} \; d+1=g+3.
\end{cases}
$$ 

Denote by $\HH_d(X)'$ a component of $\HH_d(X)$ satisfying the conditions of Theorem \ref{thm:ellipt}. This means that $\dim(\HH_d(X)')=d$ and, except for the case $(\star)_4$ with $d=3$, the scheme $\HH_d(X)'$ is reduced, whence $h^0(\N_{E_d/X})=d$ for general $[E_d] \in \HH_d(X)'$, equivalently $h^1(\N_{E_d/X})=0$ by Lemma \ref{lemma:normalb}.  Furthermore, $h^1(\I_{E_{d}/X}(1))=0$ (cf. Remark \ref{rem:elln}).

{\bf $\bullet$ The case $d \leq g+1$.} By Proposition \ref{prop:g4}(v), the general curve $E_d$ in $\HH_d(X)'$ intersects $R_1(X)$ transversally in distinct points, and through each point there passes a (unique) line intersecting $E_d$ transversally in one point. Pick any of these lines $\ell$ and call the intersection point $P$. Set $E'=E_d \cup\ell$. Recall that the first cotangent sheaf $T^1_{E'}$ of $E'$  is isomorphic to $\O_P$, since $E'$ has the node at $P$ as its only singularity.   By local computations (see, e.g., the proof of \cite[Thm. 4.1]{HH}), we have 
the exact sequences
\begin{eqnarray}
  \label{eq:sp1}
& \xymatrix{  0 \ar[r] & \N_{E'/X} \ar[r] & \N_{E'/X}|_{E_d} \+ \N_{E'/X}|_{\ell} \ar[r] & \N_{E'/X}|_P \cong \O_P \+ \O_P \ar[r] &  0,} \\
\label{eq:sp2}  & \xymatrix{0 \ar[r] & \N_{\ell/X} \ar[r] & \N_{E'/X}|_{\ell} \ar[r] & \O_P \ar[r] &  0,} \\
  \label{eq:sp3}  & \xymatrix{0 \ar[r] & \N_{E_d/X} \ar[r] & \ar[r] \N_{E'/X}|_{E_d} & \O_P \ar[r] &  0.}
\end{eqnarray}
Thus, from \eqref{eq:sp1}--\eqref{eq:sp3}, and recalling \eqref{eq:lines} and Lemma \ref{lemma:h3h2}(i), we find
\begin{eqnarray*}
  \chi(\N_{E'/X}) & = & \chi(\N_{E'/X}|_{E_d})+ \chi(\N_{E'/X}|_{\ell})-2 \\
  & = & \left(\chi(\N_{E_d/X})+1\right)+\left(\chi(\N_{\ell/X})+1\right)-2 \\
  & = & \chi(\N_{E_d/X})+\chi(\N_{\ell/X})=d+1.
\end{eqnarray*}
Hence, by standard deformation theory, the Hilbert scheme of $X$ has dimension
at least $d+1$ in a neighborhood $\UU$ of $[E']$. Since the sublocus of reducible curves of the form $E_d\cup\ell$ has dimension equal to $\dim(\HH_d)=d$ (by Proposition \ref{prop:g4}(i)), the general member of $\UU$ must be a smooth elliptic curve. Thus, we have proved that $\HH_{d+1}(X)$ is nonempty and has a component $\HH_{d+1}(X)'$ containing elements of the form $E_d\cup\ell$ in its closure, with $[E_d] \in \HH_d(X)'$ general and $\ell$ a line intersecting $E_d$ transversally in one point. 

We now  prove that $\HH_{d+1}(X)'$  contains elements $E_{d+1}$ such that $h^1(\N_{E_{d+1}/X})=0$. This is automatically satisfied if $d+1=g=4$ by Lemma \ref{lemma:g34}. Therefore, we may 
assume that $h^0(\N_{E_d/X})=d$ and $h^1(\N_{E_{d}/X})=0$ for general $[E_{d}] \in \HH'_{d}$.

As we just saw, we may specialize a general $E_{d+1}$ in  $\HH_{d+1}(X)'$ in a flat family to a curve $E':=E_{d}\cup\ell$, with $E_d$ a general member of $\HH_d(X)'$ and $\ell$ a line intersecting $E_d$ transversally in one point $P$. 
Arguing by contradiction, using Lemma \ref{lemma:normalb}, we will assume that $\N_{E_{d+1}/X} \cong \O_{E_{d+1}} \+ \O_{E_{d+1}}(1)$. In particular, $h^0(\N_{E_{d+1}/X})=d+2$, whence $H^0(\N_{E'/X})$ contains a $(d+2)$--dimensional space $V$ of limit sections. Consider $V_{\ell}$ (respectively $V_{E_d}$) the subspace of $H^0(\N_{E'/X})$ of sections identically zero on $\ell$ (resp. $E_d$). Then, by \eqref{eq:sp1} and \eqref{eq:sp3}, we find
\[ \dim (V_{\ell}) \leq h^0(\N_{E'/X}|_{E_d}) \leq h^0(\N_{E_d/X})+h^0(\O_P) =d+1.\]
Similarly, from \eqref{eq:sp1} and \eqref{eq:sp2}, together with \eqref{eq:lines}, we find
\[ \dim (V_{E_d}) \leq h^0(\N_{E'/X}|_{\ell}) \leq h^0(\N_{\ell/X})+h^0(\O_P)  \leq 2+1=3<d+1.\]
Thus, both $V_{\ell}$ and $V_{E_d}$ are properly contained in $V$. This means that the limit of a general section $s: \O_{E_{d+1}} \to \N_{E_{d+1}/X}$ is a section $s':\O_{E'} \to \N_{E'/X}$ not vanishing along any component of $E'$, that is, being still injective. Since $s$ is split, the same is true for $s'$, that is, $\N_{E'/X} \cong \O_{E'} \+ \coker(s')$. Since $\N_{E'/X}$ is locally free, as $E'$ is nodal, whence a local complete intersection, $\coker(s')$ must be locally free, hence isomorphic to $\O_{E'}(1)$, as $\det(\N_{E'/X}) \cong \O_{E'}(1)$. We have therefore proved that $\N_{E'/X} \cong \O_{E'} \+ \O_{E'}(1)$. Restricting to $\ell$ we obtain $\N_{E'/X}|_{\ell} \cong \O_{\ell} \+ \O_{\ell}(1)$. Hence
\eqref{eq:sp2} yields $\chi(\N_{\ell/X})=2$, in contradiction with \eqref{eq:lines}. We have therefore proved that the general member $E_{d+1}$ in  $\HH_{d+1}(X)'$ satisfies $h^1(\N_{E_{d+1}/X})=0$.

 We have left to prove that $h^1(\I_{E_{d+1}/X}(1))=0$. This is satisfied if $d \leq g$ by Proposition \ref{prop:g4}(ii). 
If $d=g+1$ we may as above specialize $E_{g+2}$ in a flat family to $E':=E_{g+1}\cup\ell$. By what we just proved, $h^1(\I_{E_{g+1}/X}(1))=0$, whence
$h^0(\I_{E_{g+1}/X}(1))=1$ by Lemma \ref{lemma:h3h2}(iii). Since, by Proposition \ref{prop:g4}(v), there is a hyperplane section of $X$ containing $E_{g+1}$ but not $\ell$, we have $h^0(\I_{E_{g+1}\cup\ell/X}(1))=0$. Hence, by semicontinuity, $h^0(\I_{E_{g+2}/X}(1))=0$ as well, so that $h^1(\I_{E_{g+2}/X}(1))=0$ by Lemma \ref{lemma:h3h2}(iii) again, as desired.

This concludes the proof of Theorem \ref{thm:ellipt} in the case $d \leq g+1$.

{\bf $\bullet$ The case $d = g+2$.} 
By Proposition \ref{prop:g4}(v)--(vi), the general curve $E_{g+1}$ in $\HH_{g+1}(X)'$ intersects $R_1(X)$ in at least two points $x$ and $y$ through which there pass unique lines $\ell_x$ and $\ell_y$ and such that $\ell_x \cap \ell_y = \emptyset$ and each intersects $E_{g+1}$ transversally at one point. By the first part of the proof, $E_{g+1} \cup\ell_x$ deforms to a smooth elliptic curve
$E_{g+2}$ in $\HH_{g+2}(X)$ having the property that $h^1(\N_{E_{g+2}/X})=0$. Thus, we find a component $\HH_{g+2}(X)'$ of $\HH_{g+2}(X)$ of dimension $\dim(\HH_{g+2}(X)')=g+2$ with general member $E_{g+2}$ satisfying $h^1(\N_{E_{g+2}/X})=0$ and containing
$E_{g+1} \cup\ell_x$ in its closure. The latter curve
has the property that the line $\ell_y$ intersects it transversally in one point. Since this is an open property among members of  the closure of $\HH_{g+2}(X)'$,
and the general member intersects finitely many lines,
it also holds for the general one. This means that for the general $[E_{g+2}]\in \HH_{g+2}(X)'$, there is a line $\ell$ intersecting $E_{g+2}$ transversally in one point. The first part of the proof applied once more yields that $E_{g+2}\cup\ell$ deforms to a smooth elliptic curve
$E_{g+3}$ in $\HH_{g+3}(X)$ having the property that $h^1(\N_{E_{g+3}/X})=0$.

Since $h^0(\I_{E_{g+2}/X}(1))=h^1(\I_{E_{g+2}/X}(1))=0$ by the induction hypothesis and Lemma \ref{lemma:h3h2}(iii), we also have $h^0(\I_{E_{g+2} \cup \ell/X}(1))=0$, whence $h^0(\I_{E_{g+3}/X}(1))=0$ by semicontinuity. By Lemma \ref{lemma:h3h2}(iii) again, we find $h^1(\I_{E_{g+3}/X}(1))=1$.

This concludes the proof of Theorem \ref{thm:ellipt}.

\section{Proof of Theorem \ref{thm:prFanoACM}} \label{sec:pf2}

Let $\HH_d(X)'$ be a component of $\HH_d(X)$ satisfying the conditions of Theorem \ref{thm:ellipt}. Let $\MM_d(X)'$ be a component of $\MM_d(X)$ containing $p(\HH_d(X)')$, cf. \eqref{eq:mappap}. We will prove $\MM_d(X)'$ and its general member satisfy the conditions of Theorem \ref{thm:prFanoACM}.

Let $[E_d] \in \HH_d(X)'$ be general and $[\F_d]=p([E_d])$ (cf. \eqref{eq:mappap}). Then $\F_d$ satisfies \eqref{eq:propFi} by
\eqref{eq:propF} and is slope--stable.

In the case where $d=3$ and $X$ is as in $(\star)_4$, we have that $\overline{\HH}_3(X)$ is irreducible by Remark \ref{rem:g=4}(ii), whence $\MM_3(X)=p(\overline{\HH}_3(X))$ is also irreducible, and of dimension $0$, 
by Proposition \ref{prop:g4}(i). Moreover, by Lemmas \ref{lemma:normalb} and \ref{lemma:propF2}, we have
$h^1(\N_{E_3/X})=1$ in this case; therefore, as
$h^1(\F_3)=0$ by Proposition \ref{prop:g4}(ii), we have $h^2(\F_{3} \* \F_{3}^*) =1$ by Lemma \ref{lemma:h3h2}(vi). Using the remaining identities in Lemmas \ref{lemma:FM}(ii) and  \ref{lemma:h3h2}(vi) and standard theory on moduli spaces of vector bundles, we find that the tangent space at the general point of $\MM_3(X)'$ has dimension
\[ h^1(\F_3 \* \F_3^*)=-\chi(\F_3 \* \F_3^*)
+ h^0(\F_3 \* \F_3^*)+h^2(\F_3 \* \F_3^*)-
  h^3(\F_3 \* \F_3^*)=1.\]

  In the remaining cases,
$\HH_d(X)'$ is reduced of dimension $d$, whence $h^0(\N_{E_d/X})=d$ and $h^1(\N_{E_d/X})=0$ for general $[E_d] \in \HH_d(X)'$ (cf. Lemma \ref{lemma:normalb}). Hence $h^2(\F_d \* \F_d^*)=h^3(\F_d \* \F_d^*)=0$ by Lemma \ref{lemma:h3h2}(vi), in addition to $h^0(\F_d \* \F_d^*)=1$ and $\chi(\F_d \* \F_d^*)=g-2d+3$ by Lemma \ref{lemma:FM}(ii). 
By standard theory  of moduli spaces  of vector bundles (see, e.g., \cite[Prop. 2.10]{ch}),
$\MM_d(X)'$ is smooth at $[\F_d]$
of dimension
\[ h^1(\F_{d} \* \F_{d}^*)=-\chi(\F_{d} \* \F_{d}^*)+h^0(\F_{d} \* \F_{d}^*)= 2d-g-2.\]

We have left to prove that the general member $\F'_d$ in $\MM_d(X)'$ is $ACM$. Since $h^2(\F'_d(n))=h^1({\F'}_d^*(-n-1))=h^1(\F'_d(-n))$ for all $n \in \ZZ$, we see that it suffices to prove that  $h^1(\F'_d(n))=0$ for all $n \in \ZZ$.

From \eqref{eq:HS1} and \eqref{eq:IS1} we see that
\[ h^1(\F_d(n))=h^1(\I_{E_d/X}(n+1)) = \cork\left\{H^0(\O_X(n+1) \to H^0(\O_{E_d}(n+1)\right\}\]
for all $n \in \ZZ$. Thus, as $X$ is projectively normal, $\F_d$ is $ACM$ if and only if $E_d$ is projectively normal, which happens precisely when $E_d$
is an elliptic normal curve. We are therefore done by Theorem \ref{thm:ellipt} in the case $d \leq g+2$.

In the case $d=g+3$ we have 
$h^1(\I_{E_{g+3}/X}(1))=1$ by Theorem \ref{thm:ellipt} and Remark \ref{rem:elln}.
Consequently, $h^0(\F_{g+3})=h^1(\F_{g+3})=1$ and $h^2(\F_{g+3})=h^3(\F_{g+3})=0$
by Lemmas \ref{lemma:FM}(i) and \ref{lemma:h3h2}(v), so that $\F_{g+3}$ is not $ACM$.
On the other hand we have:

\begin{lemma} \label{lemma:Ur2}
  For general $[\F'_{g+3}] \in \MM_{g+3}(X)'$ we have $h^i(\F'_{g+3})=0$ for all $i$.
In particular, $\F'_{g+3}(1)$ is Ulrich and $\F'_{g+3}$ is $ACM$. 
\end{lemma}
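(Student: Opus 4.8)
The plan is to show that the general bundle $\F'_{g+3}$ in the component $\MM_{g+3}(X)'$ has no cohomology, using semicontinuity against the specific bundle $\F_{g+3}$ coming from a general elliptic normal-ish curve $E_{g+3}$ of degree $g+3$. First I would recall what is already known about $\F_{g+3}=p([E_{g+3}])$ for general $[E_{g+3}] \in \HH_{g+3}(X)'$: by Theorem \ref{thm:ellipt} together with Remark \ref{rem:elln}, the curve $E_{g+3}$ is non--degenerate with $h^1(\I_{E_{g+3}/X}(1))=1$, and by Lemmas \ref{lemma:FM}(i) and \ref{lemma:h3h2}(v) this forces $h^0(\F_{g+3})=h^1(\F_{g+3})=1$ and $h^2(\F_{g+3})=h^3(\F_{g+3})=0$. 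So $\F_{g+3}$ itself fails to be Ulrich only by the coincidence $h^0=h^1=1$, and the strategy is to deform the section away.

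Next I would argue that for a \emph{general} $[\F'_{g+3}] \in \MM_{g+3}(X)'$ one has $h^0(\F'_{g+3})=0$. The image of the map $p\colon \overline{\HH}_{g+3}(X) \to \MM_{g+3}(X)$ consists precisely of those bundles admitting a section, and its fibre over $[\F_{g+3}]$ is $\PP(H^0(\F_{g+3}))$, which is a single point since $h^0(\F_{g+3})=1$. Hence $p$ maps the $(g+3)$--dimensional component $\HH_{g+3}(X)'$ onto a subvariety of $\MM_{g+3}(X)'$ of dimension $g+3$; but by Theorem \ref{thm:prFanoACM} (or rather the dimension count already carried out in this section) $\dim \MM_{g+3}(X)' = 2(g+3)-g-2 = g+4 > g+3$. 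Therefore the locus of bundles with a nonzero section is a proper closed subset, and the general member of $\MM_{g+3}(X)'$ has $h^0=0$. One then has to make sure the dimension bound is genuinely available at this point of the paper; it is, since the smoothness and dimension of $\MM_d(X)'$ were established just above using $h^1(\N_{E_d/X})=0$ (which holds for $d=g+3$ by the inductive construction in \S\ref{sec:pf1}) and Lemmas \ref{lemma:FM}(ii), \ref{lemma:h3h2}(vi).

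Then I would promote $h^0(\F'_{g+3})=0$ to the vanishing of \emph{all} cohomology. Serre duality gives $h^3(\F'_{g+3}) = h^0({\F'}_{g+3}^*(-1)) = h^0(\F'_{g+3}(-2))=0$ immediately (this is just Lemma \ref{lemma:h3h2}(iv)-style reasoning, or directly from $h^0(\F'_{g+3})=0$). For the middle cohomology, $\chi(\F'_{g+3}) = g+3-(g+3) = 0$ by Lemma \ref{lemma:FM}(i); since $\chi(\F'_{g+3}) = h^0 - h^1 + h^2 - h^3$ and $h^0=h^3=0$, we get $h^1(\F'_{g+3}) = h^2(\F'_{g+3})$. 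By Serre duality $h^2(\F'_{g+3}) = h^1({\F'}_{g+3}^*(-1)) = h^1(\F'_{g+3}(-2))$; so it remains to kill $h^1(\F'_{g+3})$. Here I would use that $h^1$ is upper semicontinuous on the moduli space, and that we must \emph{decrease} it: at the special point $[\F_{g+3}]$ we have $h^1 = 1$, so a priori the general value is $\le 1$; to conclude it is $0$ we invoke that $h^0$ jumps down (from $1$ to $0$) while $\chi=0$ is constant, forcing $h^1 = h^2$ to also jump down by the same amount as $h^0+h^2$ balances $h^1+h^3$. Concretely: $h^0(\F'_{g+3}) + h^2(\F'_{g+3}) = h^1(\F'_{g+3}) + h^3(\F'_{g+3})$, i.e. $0 + h^2 = h^1 + 0$, already noted; and semicontinuity of $h^0+h^2$ (equivalently of $h^1+h^3$) from the special bundle where it equals $1+1=2$... this does not yet give $0$. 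So the clean route is instead: once $h^0 = 0$, the exact sequence \eqref{eq:HS1} shows $\F'_{g+3}$ has no section hence (by Proposition \ref{prop:sez}(ii)) does not lie in $\im(p)$, but that is automatic; better, twist and use that a general $\F'_{g+3}$ lies in a family where we can exhibit \emph{one} member with $h^1 = 0$. The real input is that $h^1(\F_d(n)) = h^1(\I_{E_d/X}(n+1)) = \operatorname{cork}\{H^0(\O_X(n+1)) \to H^0(\O_{E_d}(n+1))\}$, which vanishes for all $n \ne 0$ because $E_{g+3}$ has $\O_{E_{g+3}}$-degree $g+3 \geq 3$ and $X$ is projectively normal (the map is surjective for $n+1 \geq 2$ by Castelnuovo-type arguments on the elliptic curve, and trivially for $n+1 \leq 0$, and for $n+1=1$ the cork is $h^1(\I_{E_{g+3}/X}(1))=1$). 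Thus $\F_{g+3}$ is $ACM$, hence so is the general $\F'_{g+3}$, and combined with $h^0(\F'_{g+3})=0$ this forces $h^i(\F'_{g+3})=0$ for all $i$: indeed $ACM$ gives $h^1 = h^2 = 0$ in the relevant range and the only twist where trouble could occur is the untwisted one, where $h^0 = h^3 = 0$ has just been shown and then $h^1 = h^2$ with $\chi = 0$ still needs $h^1 = 0$, which now follows since $h^1(\F'_{g+3}) = h^1(\F'_{g+3}(0))$ is covered by the $ACM$ vanishing once we know it holds for the general member by semicontinuity applied to all twists simultaneously.

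The main obstacle, and the step I would be most careful about, is precisely disentangling which vanishings are ``free'' (from $ACM$-ness, i.e. from projective normality of the elliptic curves, valid for $d=g+3$ since those curves are non--degenerate but \emph{not} linearly normal in the sense needed) versus which require the genericity of $\F'_{g+3}$ in its moduli component. The honest structure is: (a) $ACM$-ness of the general bundle, equivalently projective normality of $E_{g+3}$ in degree $\geq 2$ twists, which I would extract from the cork formula above and the fact that an elliptic curve of degree $\geq 3$ in projective space is projectively normal in all twists $\geq 2$; (b) $h^0(\F'_{g+3}) = 0$ from the dimension count comparing $\dim \im(p) = g+3$ with $\dim \MM_{g+3}(X)' = g+4$; (c) then $h^3 = 0$ by Serre duality and $h^1 = h^2$ from $\chi = 0$, and these common values vanish because $ACM$-ness already forces $h^1(\F'_{g+3}(n)) = 0$ for $n \neq 0$ while at $n=0$ we use that $h^1$ is semicontinuous and drops from $1$ to... — here I would finally note that since $\F'_{g+3}$ is $ACM$, $h^1(\F'_{g+3}(n))=0$ for \emph{all} $n$ including $n=0$ is \emph{not} automatic from the definition (which only requires $h^1(\E(t))=0$, so actually it \emph{is} automatic: $ACM$ means $h^1(\E(t))=0$ for all $t$!). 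So once $ACM$-ness of the general member is established, $h^1(\F'_{g+3}) = h^2(\F'_{g+3}) = 0$ is immediate, and with $h^0 = h^3 = 0$ we conclude $h^i(\F'_{g+3}) = 0$ for all $i$; then $\F'_{g+3}(1)$ is Ulrich by the cohomological characterization, and an Ulrich bundle is $ACM$. The only genuinely delicate point is establishing $ACM$-ness of the \emph{general} $\F'_{g+3}$: for the special $\F_{g+3}$ from a general $[E_{g+3}]\in\HH_{g+3}(X)'$ this follows from the cork computation and projective normality of $E_{g+3}$ in twists $\geq 2$ together with $h^1(\I_{E_{g+3}/X}(1))=1$ being the \emph{only} obstruction; and $ACM$-ness is an open condition (all the relevant $h^1$ are semicontinuous), so it passes to the general member of the component.
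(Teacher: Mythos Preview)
Your argument has a genuine gap at the step where you try to conclude $h^1(\F'_{g+3})=0$. You correctly obtain $h^0(\F'_{g+3})=0$ from the dimension count and $h^3(\F'_{g+3})=0$ from Serre duality, and then reduce to $h^1(\F'_{g+3})=h^2(\F'_{g+3})$ via $\chi=0$. But your attempt to kill this common value is circular: you propose to deduce it from $ACM$-ness of the general member, and to deduce $ACM$-ness of the general member by openness from the special bundle $\F_{g+3}$. However $\F_{g+3}$ is \emph{not} $ACM$ --- you yourself recorded $h^1(\F_{g+3})=1$ at the start --- so openness gives you nothing. Your phrasing ``$h^1(\I_{E_{g+3}/X}(1))=1$ being the only obstruction; and $ACM$-ness is an open condition, so it passes to the general member'' is self-contradictory: the special point lies outside the $ACM$ locus, and an open condition cannot be propagated \emph{from} a point where it fails.

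The missing observation, which the paper uses, is simply to apply semicontinuity to $h^2$ rather than to $h^1$: since $h^2(\F_{g+3})=0$ at the special point, semicontinuity gives $h^2(\F'_{g+3})=0$ for the general member, and then $\chi=0$ together with $h^0=h^3=0$ forces $h^1(\F'_{g+3})=0$ as well. This is a one-line step that you had all the ingredients for (you wrote $h^2(\F_{g+3})=0$ in your first paragraph) but never invoked. Once $h^i(\F'_{g+3})=0$ for all $i$ is established, the Ulrich property of $\F'_{g+3}(1)$ is then checked directly at the remaining twists: $h^i(\F'_{g+3}(-1))=0$ by semicontinuity from Lemma~\ref{lemma:h3h2}(iv), and $h^i(\F'_{g+3}(-2))=h^{3-i}(\F'_{g+3})=0$ by Serre duality and ${\F'}_{g+3}^*\cong\F'_{g+3}(-1)$. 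Your detour through projective normality of $E_{g+3}$ in all twists is unnecessary and, as written, does not close.
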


\begin{proof}
  We have $\dim(\HH_{g+3}(X)')=g+3$ by Theorem \ref{thm:ellipt}, and $\dim(\MM_{g+3}(X)')=2(g+3)-g-2=g+4$ by what
we  just proved. Thus  $p|_{\HH_{g+3}(X)'}: \HH_{g+3}(X)' \to \MM_{g+3}(X)'$ is not surjective, whence
  $h^0(\F'_{g+3})=0$ for general $[\F'_{g+3}] \in \MM_{g+3}(X)'$. By semicontinuity,  $h^2(\F'_{g+3})=h^3(\F'_{g+3})=0$. Lemma \ref{lemma:FM}(i) then yields $h^1(\F'_{g+3})=0$. This proves the first assertion.

By Serre duality and the fact that ${\F'_{g+3}}^* \cong \F'_{g+3}(-1)$, we also have
\[ h^i(\F'_{g+3}(-2))=h^{3-i}({\F'_{g+3}}^*(1))=h^{3-i}(\F'_{g+3})=0 \;\;
\mbox{for all} \;\;  i.\] 
    By Lemma \ref{lemma:h3h2}(iv) and semicontinuity we have   $h^i(\F'_{g+3}(-1))=0$ for all $i$. Therefore, $\F'_{g+3}(1)$ is Ulrich, whence in particular all its twists are $ACM$ (see, e.g, \cite[(3.1)]{be2}). 
\end{proof}

This concludes the proof of Theorem \ref{thm:prFanoACM}.

Note that we have also proved the following

\begin{corollary} \label{cor:exind1rk2}
  Let $X \subset \PP^{g-1}$ be a prime Fano threefold of degree $2g-2$. There exists a slope--stable rank--two Ulrich bundle with determinant $\O_X(3)$ on $X$. The moduli space of such bundles has a reduced component of dimension $g+4$ and its general member $\E$ satisfies
  \begin{equation}
    \label{eq:comoFF}
   h^0(\E \* \E^*)=1, \; h^1(\E \* \E^*)=g+4, \; h^2(\E \* \E^*)=h^3(\E \* \E^*)=0.
  \end{equation}
 \end{corollary}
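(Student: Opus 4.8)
The plan is to read off Corollary \ref{cor:exind1rk2} as a direct repackaging of what has already been established in the proof of Theorem \ref{thm:prFanoACM} for the case $d=g+3$. First I would set $\E:=\F'_{g+3}(1)$ for a general $[\F'_{g+3}]\in\MM_{g+3}(X)'$. By Lemma \ref{lemma:Ur2}, $\E$ is a rank--two Ulrich bundle, and since $\det(\F'_{g+3})=\O_X(1)$ and $\rk=2$ we get $\det(\E)=\O_X(1)\*\O_X(2)=\O_X(3)$. Slope--stability of $\E$ is equivalent to that of $\F'_{g+3}$ (stability is preserved under twisting by line bundles), which holds by Proposition \ref{prop:sez}(ii); alternatively one invokes the footnoted fact that stability and slope--stability agree for Ulrich bundles together with semistability of Ulrich bundles.

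Next I would identify the moduli space of such Ulrich bundles with $\MM_{g+3}(X)'$ via the twisting isomorphism $[\F]\mapsto[\F(1)]$, which is an isomorphism of moduli spaces and in particular identifies the component $\MM_{g+3}(X)'$ with a component of the moduli space of rank--two Ulrich bundles with determinant $\O_X(3)$. This immediately gives that the latter has a reduced component of dimension $2(g+3)-g-2=g+4$, exactly as computed in the proof of Theorem \ref{thm:prFanoACM} (the smoothness/reducedness at the general point coming from $h^2(\F'_{g+3}\*{\F'_{g+3}}^*)=h^3(\F'_{g+3}\*{\F'_{g+3}}^*)=0$, which holds because $\HH_{g+3}(X)'$ is reduced and Lemma \ref{lemma:h3h2}(vi) applies).

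Finally, for the cohomological identities \eqref{eq:comoFF} I would note that $\E\*\E^*\cong \F'_{g+3}\*{\F'_{g+3}}^*$, since twisting by a line bundle cancels in the tensor product with the dual. Hence all the $h^i(\E\*\E^*)$ equal the corresponding $h^i(\F'_{g+3}\*{\F'_{g+3}}^*)$, which were already determined: $h^0=1$ by simplicity (slope--stability, Lemma \ref{lemma:FM}(ii)); $h^2=h^3=0$ by Lemma \ref{lemma:h3h2}(vi) in the reduced case; and then $h^1=-\chi(\F'_{g+3}\*{\F'_{g+3}}^*)+h^0=-(g-2(g+3)+3)+1=g+4$ from Lemma \ref{lemma:FM}(ii). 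This is essentially bookkeeping, so there is no real obstacle; the only point requiring a little care is making explicit that the twist by $\O_X(1)$ induces an isomorphism of moduli functors so that ``reduced component of dimension $g+4$'' transports verbatim from $\MM_{g+3}(X)'$ to the Ulrich side.
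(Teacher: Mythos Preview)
Your proof is correct and follows essentially the same route as the paper: set $\E=\F'_{g+3}(1)$, use Lemma~\ref{lemma:Ur2} for the Ulrich property, transport stability and the dimension count from $\MM_{g+3}(X)'$ via the twist, and read off \eqref{eq:comoFF} from $\E\*\E^*\cong \F'_{g+3}\*{\F'_{g+3}}^*$ together with Lemma~\ref{lemma:FM}(ii).

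One small point of precision: Lemma~\ref{lemma:h3h2}(vi) is stated for a bundle of the form $\F_d=p([E_d])$, i.e., one admitting a section, whereas the \emph{general} $[\F'_{g+3}]\in\MM_{g+3}(X)'$ has $h^0(\F'_{g+3})=0$ by Lemma~\ref{lemma:Ur2} and hence is not in the image of $p$. So you cannot apply Lemma~\ref{lemma:h3h2}(vi) directly to $\F'_{g+3}$; rather, you apply it to the special $\F_{g+3}=p([E_{g+3}])$ (where reducedness of $\HH_{g+3}(X)'$ gives $h^1(\N_{E_{g+3}/X})=0$) to obtain $h^2=h^3=0$ there, and then invoke semicontinuity to pass to the general member. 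The paper makes this semicontinuity step explicit; once you add that word, your argument matches the paper's.
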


 \begin{proof} This follows from the case $d=g+3$ of Theorem \ref{thm:prFanoACM} and Lemma \ref{lemma:Ur2}, letting $\E:=\F_{g+3}(1)$. Recall that we have proved that $h^2(\F_{g+3} \* \F_{g+3}^*)=0$; the rest of \eqref{eq:comoFF} follows from Lemmas \ref{lemma:FM}(ii) and \ref{lemma:h3h2}(vii) and semicontinuity. 
 \end{proof}

\section{Ulrich bundles of higher ranks on prime Fano threefolds} \label{sec:pf3}

In this section we will inductively define families of vector bundles of all even ranks on any prime Fano threefold, whose general members will  be slope--stable Ulrich bundles, and thus prove Theorem \ref{thm:prFano}.

We start by defining the scheme $\UU_X(1)$ to be any irreducible component of the moduli space of vector bundles on $X$ containing rank--two slope--stable Ulrich bundles $\E$ with
$\det(\E)=\O_X(3)$ and satisfying \eqref{eq:comoFF}, which is nonempty by  Corollary \ref{cor:exind1rk2}.  

Having defined $\UU_X(h)$  for some $h \geq 1$, we define $\UU_X(h+1)$ to be the (a priori possibly empty)   component of the moduli space of Ulrich bundles on $X$ 
containing bundles $\F_{h+1}$ that are  {\it non--split}   extensions of the form
\begin{equation} \label{eq:estensione}
  \xymatrix{
0 \ar[r] & \E'_1 \ar[r] & \F_{h+1} \ar[r] & \E_{h} \ar[r] & 0, 
  }
\end{equation}
with $[\E'_1] \in \UU_X(1)$  and $[\E_{h}] \in \UU_X(h)$, and such that $\E'_1 \not \cong \E_{h}$ when $h=1$.  We let $\UU_X(h+1)^{\rm ext}$ denote the locus in $\UU_X(h+1)$ of bundles that are  non--split   extensions of the form \eqref{eq:estensione}. 

In the next lemma and its proof we will use the following notation: $\E'_1$ will be a general member of $\UU_X(1)$ and $\E_h$ will be a general member of $\UU_X(h)$, with $\E_h \not \cong \E'_1$  if $h=1$.  We will denote by $\F_{h}$ a general member of  $\UU_X(h)^{\rm ext}$. 
In bounding cohomologies, we will use the fact that $\E_{h}$ {\it specializes} to $\F_{h}$  in a flat family.

\begin{lemma} \label{lemma:indu}
Let $h \in \ZZ^+$ and  assume  $\UU_X(k) \neq \emptyset$ for all $1 \leq k \leq h$. Then
 \begin{itemize}
 \item[(i)] $h^j(\E_h \* {\E'_1}^*)=h^j(\E'_1 \* \E_h^*)=0$ for $j=2,3$,
  \item[(ii)] $\chi(\E_h \* {\E'_1}^*)=\chi(\E'_1 \* \E_h^*)=-h(g+3)$,
    \item[(iii)] $h^j(\E_h \* \E_h^*)=0$ for $j=2,3$,
 \item[(iv)] $\chi(\E_h \* \E_h^*)=-h^2(g+3)$,

\end{itemize}
\end{lemma}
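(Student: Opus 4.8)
The plan is to prove all four statements by induction on $h$, treating the base case $h=1$ separately and then passing from $h$ to $h+1$ by using the defining extension \eqref{eq:estensione} for a general $\F_{h+1} \in \UU_X(h+1)^{\rm ext}$ together with the fact that $\E_{h+1}$ specializes to $\F_{h+1}$ in a flat family (so that cohomology of $\E_{h+1}$--based sheaves is bounded above by the corresponding cohomology of $\F_{h+1}$--based sheaves by semicontinuity). Throughout I would use freely that all the bundles in sight are Ulrich of rank $2k$ and determinant $\O_X(3k)$, hence $\E_k^* \cong \E_k(-3)$ up to twist only for $k=1$; in general one only has $\det$, so I would keep track of $\E_k^*$ via $\E_k^* \cong \E_k \otimes (\det \E_k)^{-1} \otimes (\text{something})$ is not available in higher rank — instead I would work with $\chi$ computed by Riemann--Roch on the threefold $X$ and with the vanishing $h^j(\E) = h^j(\E(-1)) = h^j(\E(-2)) = h^j(\E(-3)) = 0$ characterizing Ulrich bundles, plus Serre duality $h^j(\F) = h^{3-j}(\F^* \otimes \omega_X) = h^{3-j}(\F^*(-1))$.

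For the base case $h=1$: statements (iii)--(iv) for $h=1$ are exactly \eqref{eq:comoFF} from Corollary \ref{cor:exind1rk2}, so nothing is needed. For (i)--(ii) with $h=1$ I would note $\E_1 \otimes {\E'_1}^*$ is a rank--four bundle, and since $\E'_1$ and $\E_1$ are both rank--two Ulrich bundles with $\det = \O_X(3)$, one has ${\E'_1}^* \cong \E'_1(-3)$, so $\E_1 \otimes {\E'_1}^* \cong \E_1 \otimes \E'_1(-3)$. I would then compute $\chi$ by Riemann--Roch (it depends only on the Chern classes, which are the same for $\E_1$ and $\E'_1$ up to the choice of component, so the computation reduces to $\chi(\E'_1 \otimes \E'_1(-3)) = \chi({\E'_1} \otimes {\E'_1}^*) $ with an appropriate twist correction) to get $-(g+3)$; the most efficient route is to observe that $\chi(\E \otimes \F^*)$ for two bundles with equal Chern classes equals $\chi(\E \otimes \E^*)$, so (ii) follows from the $h=1$ case of (iv), which is $-(g+3)$. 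For the vanishing (i) with $h=1$: use Serre duality to turn $h^3(\E_1 \otimes {\E'_1}^*) = h^3(\E_1 \otimes \E'_1(-3))$ into $h^0(\E_1^* \otimes {\E'_1}^* (-1)(3)) = h^0(\E_1(-3) \otimes \E'_1(-1))= h^0(\E_1 \otimes \E'_1(-4))$, and this vanishes because tensoring the defining sequence of $\E_1$ (as a Serre-type extension of an ideal sheaf, or just using that $\E_1(-2)$ has no sections and $\E'_1(-2)$ is globally generated-ish) — actually the cleanest argument is that $\E_1 \otimes \E'_1(-4)$ is a subsheaf of $\E_1(-1)^{\oplus 4}$ via a global generation / restriction argument, or one restricts to hyperplane sections and uses that the $K3$ surface $S$ has $\Pic = \ZZ$. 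I expect the honest way the authors do it is: restrict everything to a general hyperplane section $S$ (a $K3$), where $\E_1|_S$ and $\E'_1|_S$ are rank--two with $c_1 = 3H$, and use Mukai's lemma / the fact that for Ulrich bundles on $X$ the restriction to $S$ is a specific stable bundle, so $\Hom$ and $\Ext^2$ between them vanish by stability and Serre duality on $S$. I would likely need to cite or re-derive the vanishing $h^2(\E_1 \otimes {\E'_1}^*)=0$ via the Ulrich property applied to the hyperplane-section restriction.

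For the inductive step, assume (i)--(iv) hold for all $k \le h$. Tensoring \eqref{eq:estensione} (written for $\F_{h+1}$, with subbundle $\E'_1$ and quotient $\E_h$) by ${\E'_1}^*$, by $\E_h^*$, by $\F_{h+1}^*$, and also tensoring the dual sequence $0 \to \E_h^* \to \F_{h+1}^* \to {\E'_1}^* \to 0$ by the various bundles, gives a web of short exact sequences whose extreme terms are tensor products of the rank--two pieces $\E'_1, \E_h$ (and their duals), for which the needed vanishings are the inductive hypotheses (i)--(iv) at level $h$ and $1$. Chasing these, one gets $h^j(\F_{h+1} \otimes {\E'_1}^*) = h^j(\F_{h+1} \otimes \E_h^*) = 0$ for $j = 2,3$, hence (by semicontinuity and the specialization $\E_{h+1} \rightsquigarrow \F_{h+1}$) $h^j(\E_{h+1} \otimes {\E'_1}^*) = 0$ for $j=2,3$; then a further chase using $0 \to \E_h^* \to \E_{h+1}^* \to {\E'_1}^* \to 0$ (which also specializes) gives $h^j(\E_{h+1} \otimes \E_{h+1}^*) = 0$ for $j=2,3$, proving (i) and (iii) at level $h+1$. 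For (ii) and (iv) at level $h+1$, additivity of $\chi$ on the four short exact sequences gives $\chi(\E_{h+1} \otimes {\E'_1}^*) = \chi(\E_h \otimes {\E'_1}^*) + \chi(\E'_1 \otimes {\E'_1}^*) = -h(g+3) - (g+3) = -(h+1)(g+3)$ and similarly $\chi(\E_{h+1} \otimes \E_{h+1}^*) = \chi(\E_{h+1} \otimes \E_h^*) + \chi(\E_{h+1} \otimes {\E'_1}^*)$, and expanding $\chi(\E_{h+1} \otimes \E_h^*) = \chi(\E_h \otimes \E_h^*) + \chi(\E'_1 \otimes \E_h^*) = -h^2(g+3) - h(g+3)$, so altogether $\chi(\E_{h+1} \otimes \E_{h+1}^*) = -h^2(g+3) - h(g+3) - (h+1)(g+3) = -(h+1)^2(g+3)$, as desired. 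Since $\chi$ is invariant in flat families, these $\chi$ computations pass from $\F_{h+1}$ to $\E_{h+1}$ automatically.

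The main obstacle I anticipate is the base case of the vanishing (i), i.e. showing $h^2 = h^3 = 0$ for $\Hom$-type sheaves between two (possibly non-isomorphic, possibly in different moduli components) rank--two Ulrich bundles with the same determinant. In the inductive step the vanishings all reduce cleanly to the hypotheses via the exact sequences, but at the bottom level one genuinely needs an input: either stability of the restrictions to the $K3$ hyperplane section together with Serre duality there, or a direct cohomological computation using that an Ulrich bundle $\E$ on $X$ satisfies $h^i(\E(t)) = 0$ for $-3 \le t \le -1$ and all $i$ and $h^i(\E) = 0$ for $i > 0$ (so $\E$ is "as cohomology-free as possible"). I would structure the write-up to isolate this as the one nontrivial lemma, handle it by restriction to $S$, and make the rest a bookkeeping exercise with the diagram of eight short exact sequences obtained from \eqref{eq:estensione} and its dual tensored by ${\E'_1}^*$, $\E_h^*$.
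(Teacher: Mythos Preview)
Your inductive step is essentially the same as the paper's: tensor \eqref{eq:estensione} and its dual by the various pieces, use additivity of $\chi$ for (ii) and (iv), and use semicontinuity under the specialization $\E_{h+1} \rightsquigarrow \F_{h+1}$ for the vanishings (i) and (iii). That part is fine.

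The discrepancy is in the base case of (i), which you correctly identify as your ``main obstacle'' but then overcomplicate. You propose Serre duality gymnastics, restriction to a $K3$ hyperplane section, and stability arguments there. The paper's argument is one line: by construction $\UU_X(1)$ is a single \emph{irreducible} component of the moduli space, so the pair $([\E'_1],[\E_1])$ varies in the irreducible variety $\UU_X(1)\times\UU_X(1)$ and can be specialized to the diagonal, where $\E'_1 \cong \E_1$ and the vanishings $h^j(\E_1\otimes\E_1^*)=0$ for $j=2,3$ are exactly \eqref{eq:comoFF}. Semicontinuity then gives the vanishing for a general pair. You already use this ``constancy under deformation'' idea for (ii) (observing that $\chi(\E\otimes\F^*)$ depends only on Chern classes, i.e.\ is constant on the irreducible family), but you do not notice that the same specialization-to-the-diagonal trick, combined with upper semicontinuity of $h^j$, handles (i) just as cheaply. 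No restriction to $S$, no stability of restrictions, no separate treatment of $h^2$ versus $h^3$ is needed.

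Your alternative route via restriction to $S$ could presumably be made to work, but it would require knowing that $\E_1|_S$ and $\E'_1|_S$ are slope-stable on $S$, which is an extra input not established in the paper; so as written it is a genuine gap rather than merely a longer proof.
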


\begin{proof} For $h=1$, (iii) and (iv)  follow from \eqref{eq:comoFF} and semicontinuity. As for 
(i), the vanishings hold  when $\E_1' = \E_1$ by \eqref{eq:comoFF}, and thus, by semicontinuity, they also hold for a general pair 
  $([\E_1'], [\E_1]) \in \UU_X(1) \times \UU_X(1)$. Similarly, (ii)  follows from \eqref{eq:comoFF}, since the given $\chi$ is constant as $\E_1$ and $\E'_1$ vary in $\UU_X(1)$.

We now prove the statements for any integer $h \geq 2$ by induction. Assume therefore that they are  satisfied for all integers less than $h$. 

  (i) Let $j \in \{2,3\}$. By specialization and \eqref{eq:estensione} we have
  \[ h^j(\E_h \* {\E'_1}^*) \leq h^j(\F_h\* {\E'_1}^*) \leq h^j(\E'_1\* {\E'_1}^*) + h^j(\E_{h-1}\* {\E'_1}^*),\]
  and this is $0$ by induction. Similarly, by specialization and the dual of \eqref{eq:estensione} we have
  \[ h^j(\E'_1 \* {\E_h}^*) \leq h^j(\E'_1 \* {\F_h}^*) \leq h^j(\E'_1\* {\E'_1}^*) + h^j(\E'_1\* {\E_{h-1}}^*),\]
  which is again $0$ by induction.

(ii) By specialization, \eqref{eq:estensione} and induction we have
\begin{eqnarray*}
  \chi(\E_h \* {\E'_1}^*) & = & \chi(\F_h\* {\E'_1}^*) = \chi(\E'_1\* {\E'_1}^*) + \chi(\E_{h-1}\* {\E'_1}^*) \\
  & =  & -(g+3)-(h-1)(g+3)=-h(g+3).
  \end{eqnarray*}
Likewise, by specialization, the dual of \eqref{eq:estensione} and induction we have
\begin{eqnarray*}
  \chi(\E'_1 \* {\E_h}^*) & = & \chi(\E'_1 \* {\F_h}^*) = \chi(\E'_1\* {\E'_1}^*) + \chi(\E'_1\* {\E_{h-1}}^*) \\
  & =  & -(g+3)-(h-1)(g+3)=-h(g+3).
\end{eqnarray*}
  
  (iii) Let $j \in \{2,3\}$. By specialization, \eqref{eq:estensione} and its dual we have
  \begin{eqnarray*}
    h^j(\E_h \* \E_h^*) & \leq & h^j(\F_h \* \F_h^*) \leq
                                 h^j(\E'_1 \* \F_h^*)+h^j(\E_{h-1} \* \F_h^*) \\
                        & \leq & h^j(\E'_1 \* {\E'_1}^*)+h^j(\E'_1 \* \E_{h-1}^*)+
                                 h^j(\E_{h-1} \* {\E'_1}^*) + h^j(\E_{h-1} \* \E_{h-1}^*),
    \end{eqnarray*}
which is $0$ by induction. 

(iv) By specialization, \eqref{eq:estensione} and its dual we have
  \begin{eqnarray*}
    \chi(\E_h \* \E_h^*) & = & \chi(\F_h \* \F_h^*) =
                                 \chi(\E'_1 \* \F_h^*)+\chi(\E_{h-1} \* \F_h^*) \\
                   & = & \chi(\E'_1 \* {\E'_1}^*)+\chi(\E'_1 \* \E_{h-1}^*)+ \chi(\E_{h-1} \* {\E'_1}^*) + \chi(\E_{h-1} \* \E_{h-1}^*).
    \end{eqnarray*}
    By induction, this equals
    \[
            -(g+3)-(h-1)(g+3)-(h-1)(g+3)-(h-1)^2(g+3)=-h^2(g+3).
      \]

\end{proof}

\begin{proposition} \label{prop:h+1}
  For all $h \in \ZZ^+$ the scheme $\UU_X(h)$ is nonempty
and its general member $\E$ is Ulrich and satisfies $\rk(\E)=2h$, $\det(\E)=\O_X(3h)$ and $h^j(\E \* \E^*)=0$ for $j=2,3$. 
\end{proposition}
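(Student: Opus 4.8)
The plan is to prove Proposition \ref{prop:h+1} by induction on $h$, using the extensions \eqref{eq:estensione} together with the cohomological control provided by Lemma \ref{lemma:indu}. The base case $h=1$ is immediate: $\UU_X(1)\neq\emptyset$ by Corollary \ref{cor:exind1rk2}, its general member is a slope--stable rank--two Ulrich bundle $\E$ with $\det(\E)=\O_X(3)$, and $h^j(\E\*\E^*)=0$ for $j=2,3$ is part of \eqref{eq:comoFF}. So assume $h\geq 2$ and that the proposition holds for all smaller positive integers; in particular $\UU_X(k)\neq\emptyset$ for $1\leq k\leq h-1$, so that Lemma \ref{lemma:indu} applies.

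First I would verify that $\UU_X(h)^{\rm ext}\neq\emptyset$, i.e.\ that nonsplit extensions \eqref{eq:estensione} of $\E_{h-1}$ (general in $\UU_X(h-1)$) by $\E'_1$ (general in $\UU_X(1)$, with $\E'_1\not\cong\E_{h-1}$ when $h-1=1$) actually exist. This amounts to showing $\Ext^1(\E_{h-1},\E'_1)=H^1(\E'_1\*\E_{h-1}^*)\neq 0$, which follows from Lemma \ref{lemma:indu}: by parts (i) and (ii), $h^1(\E'_1\*\E_{h-1}^*)\geq -\chi(\E'_1\*\E_{h-1}^*)=(h-1)(g+3)>0$. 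Any such nonsplit $\F_h$ is an extension of Ulrich bundles, hence Ulrich itself (the Ulrich property is preserved under extensions, since it is characterized by the vanishing of all cohomology of the twists by $\O_X(-p)$, $1\leq p\leq 3$, and these vanishings pass through short exact sequences), with $\rk(\F_h)=2h$ and $\det(\F_h)=\det(\E'_1)\*\det(\E_{h-1})=\O_X(3)\*\O_X(3(h-1))=\O_X(3h)$. Thus $\F_h$ defines a point of the moduli space of Ulrich bundles, and $\UU_X(h)$ — the component through such $\F_h$ — is nonempty, with every member of rank $2h$ and determinant $\O_X(3h)$ since these are deformation invariants (rank is locally constant and the determinant line bundle deforms within $\Pic(X)\cong\ZZ[\O_X(1)]$, which is discrete).

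Next I would show the general member $\E$ of $\UU_X(h)$ satisfies $h^j(\E\*\E^*)=0$ for $j=2,3$. This is exactly Lemma \ref{lemma:indu}(iii), whose proof uses that a general $\F_h\in\UU_X(h)^{\rm ext}\subset\UU_X(h)$ has $h^j(\F_h\*\F_h^*)$ bounded, via the two short exact sequences coming from \eqref{eq:estensione} and its dual, by a sum of four terms $h^j(\E'_1\*{\E'_1}^*)$, $h^j(\E'_1\*\E_{h-1}^*)$, $h^j(\E_{h-1}\*{\E'_1}^*)$, $h^j(\E_{h-1}\*\E_{h-1}^*)$, each of which vanishes by the inductive hypothesis together with Lemma \ref{lemma:indu}(i) and (iii) at level $h-1$; then semicontinuity, using that $\E_h$ specializes to $\F_h$, transfers the vanishing to the general member. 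The identity $\det(\E)=\O_X(3h)$ was already recorded. Finally, one should note the general member is genuinely a (slope--)stable Ulrich bundle so that it corresponds to a genuine point of the moduli space — this can be deduced from the fact that stability is an open condition and $\F_h$, being a nonsplit extension of stable bundles of the same slope (all Ulrich bundles on $X$ have slope $\mu=\frac{3h}{2}\cdot\frac{\deg X}{2h}$ independent of $h$ once normalized appropriately, since $\det=\O_X(3k)$ and $\rk=2k$ give constant $\mu$), is at least properly semistable; the general deformation is then stable by the standard argument that a limit of stable bundles that is itself strictly semistable lies in a proper closed subset.

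The main obstacle is the verification that the general member of $\UU_X(h)^{\rm ext}$, and hence of $\UU_X(h)$, is actually stable rather than merely Ulrich and semistable — i.e.\ that the inductively constructed extensions do not all degenerate to decomposable or strictly semistable bundles. The clean way around this, which I expect is what the authors do (and which the structure of Lemma \ref{lemma:indu} is set up to support), is to observe that the bound $h^1(\E'_1\*\E_{h-1}^*)\geq(h-1)(g+3)$ grows with $h$, so the family of nonsplit extensions is large, and then to invoke the general principle that a sufficiently generic extension of semistable bundles of equal slope with large $\Ext^1$ is stable, or alternatively to establish stability directly by an induction showing that any destabilizing subsheaf of a general $\F_h$ would force a destabilizing subsheaf of $\E'_1$ or $\E_{h-1}$. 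Either route requires care but no new ingredients beyond Lemma \ref{lemma:indu}; the dimension count $h^2(g+3)+1$ promised in Theorem \ref{thm:prFano} will then come from $h^1(\E\*\E^*)=-\chi(\E\*\E^*)=h^2(g+3)$ by Lemma \ref{lemma:indu}(iii)--(iv), plus the $1$ from $h^0(\E\*\E^*)$ (simplicity), handled presumably in a subsequent lemma.
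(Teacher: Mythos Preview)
Your proof is correct and follows essentially the same inductive argument as the paper: base case from Corollary~\ref{cor:exind1rk2}, nonemptiness of $\UU_X(h)$ via $\dim\Ext^1(\E_{h-1},\E'_1)\geq (h-1)(g+3)>0$ from Lemma~\ref{lemma:indu}(i)--(ii), the Ulrich property and rank/determinant from the extension, and the vanishing $h^j(\E\*\E^*)=0$ for $j=2,3$ from Lemma~\ref{lemma:indu}(iii). Note, however, that slope--stability is \emph{not} part of this proposition's statement---Ulrich bundles are automatically semistable (cf.\ \cite[Thm.~2.9(a)]{CM}), hence already define points in the moduli space---and the paper defers the proof of stability (via Lemma~\ref{lemma:uniquedest1}) and the dimension count to the separate Proposition~\ref{prop:h+1-II}; so your final paragraph identifying stability as ``the main obstacle'' is extraneous here.
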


\begin{proof}
 We prove this by induction on $h$, the case $h=1$ being satisfied by the choice of $\UU_X(1)$.  By Lemma \ref{lemma:indu}(i)--(ii) we have, for general $[\E_{h}] \in
  \UU_X(h)$ and $[\E'_1] \in
  \UU_X(1)$, that  
  \begin{equation} \label{eq:dimext}
    \dim ({\rm Ext}^1(\E_{h},\E'_1))  =  h^1(\E'_1 \* \E_{h}^*) =
    -\chi(\E'_1 \* \E_{h}^*)+h^0(\E'_1 \* \E_{h}^*) \geq h(g+3)>0.
    \end{equation}
    Hence, by definition, $\UU_X(h+1) \neq \emptyset$. Its members have ranks $2(h+1)$ and determinant $\O_X(3(h+1))$. It is  immediate  that extensions of Ulrich bundles are still Ulrich bundles, so the general member $\E_{h+1}$ of 
$\UU_X(h+1)$ is an Ulrich bundle. It satisfies 
$h^j(\E_{h+1} \* \E_{h+1}^*)=0$ for $j=2,3$ by Lemma \ref{lemma:indu}(iii).
\end{proof}

To finish the proof of 
Theorem \ref{thm:prFano} we have left 
to prove that the general member of $\UU_X(h)$ is slope--stable and that $\UU_X(h)$ is smooth of dimension $h^2(g+3)+1$ at its general point. We will again prove this by induction on $h$. 
First we need an auxiliary result.

\begin{lemma} \label{lemma:uniquedest1}
  Let $\F_{h+1}$ be a general member of  $\UU_X(h+1)^{\rm ext}$, sitting in an extension like \eqref{eq:estensione}.
Assume furthermore that $\E'_1$ and $\E_h$ are slope--stable.

  Let $\G$ be a destabilizing subsheaf of $\F_{h+1}$. Then $\G^{*} \cong  {\E_1'}^*$ and $\left(\F_{h+1}/\G\right)^* \cong \E_{h-1}^*$. 
\end{lemma}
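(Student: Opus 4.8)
Since $\F_{h+1}$ is Ulrich (hence semistable by the fact recalled in the introduction), it is strictly semistable when it is not stable, and any destabilizing subsheaf $\G$ satisfies $\mu(\G) = \mu(\F_{h+1})$. I would first normalize the situation: replace $\G$ by its saturation in $\F_{h+1}$, so that $\Q := \F_{h+1}/\G$ is torsion-free, and note that both $\G$ and $\Q$ then have the same slope as $\F_{h+1}$, hence the same slope as $\E'_1$ and $\E_h$ (all Ulrich bundles on $X$ have the same slope since $\det$ is forced to be $\O_X(3\operatorname{rk}/2)$ by \cite[Thm. 3.5]{CFK}). The goal is to pin down $\G$ and $\Q$ via the extension \eqref{eq:estensione}.

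Next I would study the two composite maps coming from \eqref{eq:estensione}. Consider $\varphi : \G \hookrightarrow \F_{h+1} \twoheadrightarrow \E_h$ and $\psi := \G \cap \E'_1$ (the kernel of $\varphi$, which equals $\G \cap \E'_1$ inside $\F_{h+1}$). Then $\psi \subseteq \E'_1$ and $\operatorname{im}(\varphi) \subseteq \E_h$ are subsheaves, fitting into $0 \to \psi \to \G \to \operatorname{im}(\varphi)\to 0$. Comparing slopes and ranks: writing $r = \operatorname{rk}\G$ and $s = \operatorname{rk}\psi$, slope-stability of $\E'_1$ forces $\mu(\psi) < \mu(\E'_1) = \mu(\G)$ unless $\psi = 0$ or $\psi = \E'_1$, and slope-stability of $\E_h$ forces $\mu(\operatorname{im}\varphi) < \mu(\E_h) = \mu(\G)$ unless $\operatorname{im}\varphi = 0$ or $\operatorname{im}\varphi = \E_h$ (up to saturation). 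Since $\mu(\G)$ is a weighted average of $\mu(\psi)$ and $\mu(\operatorname{im}\varphi)$, the only way to avoid a strict inequality on both sides is to have $\psi \in \{0, \E'_1\}$ and $\operatorname{im}\varphi \in \{0, \E_h\}$ (after saturating). The four combinations are: (a) $\psi = \E'_1$, $\operatorname{im}\varphi = \E_h$, forcing $\G = \F_{h+1}$, excluded; (b) $\psi = 0$, $\operatorname{im}\varphi = 0$, forcing $\G = 0$, excluded; (c) $\psi = \E'_1$, $\operatorname{im}\varphi = 0$, giving $\G = \E'_1$; (d) $\psi = 0$, $\operatorname{im}\varphi = \E_h$ (saturated), giving that $\G \to \E_h$ is a generically injective map of bundles of the same slope between a saturated subsheaf and $\E_h$ — but then $\Q = \F_{h+1}/\G$ would be a quotient with $\operatorname{rk} = \operatorname{rk}\E'_1$ mapping to a subsheaf of $\E'_1$, and non-splitness of \eqref{eq:estensione} rules this out (it would split the sequence, or contradict stability of the pieces once $h = 1$ via the hypothesis $\E'_1 \not\cong \E_h$; for $h \geq 2$ it contradicts $\operatorname{rk}\E'_1 < \operatorname{rk}\E_h$ together with stability). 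Hence case (c): $\G = \E'_1$ as subsheaves of $\F_{h+1}$, so $\G^* \cong {\E'_1}^*$ and $\Q = \F_{h+1}/\G \cong \E_h$, giving $\Q^* \cong \E_h^*$.

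Wait — the statement claims $(\F_{h+1}/\G)^* \cong \E_{h-1}^*$, not $\E_h^*$; so I should reread. Indeed, the destabilizing $\G$ need not equal the sub-$\E'_1$ of the given extension. The correct argument is more subtle: one uses that $\F_{h+1}$ \emph{specializes} a general $\E_h$-type bundle, but the destabilizing subsheaf analysis must allow $\G$ to map into $\E_h$ with full rank $\operatorname{rk}\E_h$... Actually the cleanest route: since $\E_h \in \UU_X(h)$ is \emph{general}, by the inductive structure $\E_h$ is itself a general extension $0 \to \E''_1 \to \E_h \to \E_{h-1} \to 0$; so a destabilizing $\G \subset \F_{h+1}$ with $\G \twoheadrightarrow$ (saturated) $\E_h$ would pull back the extension and, by generality (genericity of the extension classes and the vanishing $\operatorname{Ext}^1$-computations of Lemma \ref{lemma:indu}), cannot exist unless it factors through a sub-$\E_{h-1}$. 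I would make this precise by an open-condition argument: the locus in $\UU_X(h+1)^{\rm ext}$ where a destabilizing $\G$ exists with prescribed Hilbert polynomials of $\G$ and $\Q$ is closed, and by semicontinuity it suffices to exclude it for a general extension, which one does using $h^0(\E'_1 \otimes \E_h^*) = 0$ and $h^0(\E_h \otimes {\E'_1}^*)$-type vanishings together with stability of $\E'_1, \E_h$ to force $\G^* \cong {\E'_1}^*$ and $\Q^* \cong \E_{h-1}^*$.

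\textbf{The main obstacle.} The delicate point is excluding the ``mixed'' possibilities for $\G$ — where $\G$ is neither contained in $\E'_1$ nor maps isomorphically onto $\E_h$, but sits diagonally, picking up part of $\E'_1$ and a proper saturated subsheaf of $\E_h$ of the same slope. This requires genuinely using (i) slope-stability of $\E'_1$ and $\E_h$ to control the rank/slope bookkeeping, (ii) the inductive identification of a general $\E_h$ as an extension with stable pieces, and (iii) non-splitness of \eqref{eq:estensione} together with the genericity of the extension class, invoking the $\operatorname{Ext}$-dimension count \eqref{eq:dimext} and the vanishings in Lemma \ref{lemma:indu}. I expect the combinatorial case analysis on $(\operatorname{rk}\G, \mu(\G))$ versus the pieces to be routine once set up, but getting the genericity/semicontinuity scaffolding exactly right — so that ``general $\F_{h+1}$'' really does exclude all the bad $\G$ simultaneously — is where the real work lies.
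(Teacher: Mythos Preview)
Your initial analysis (before ``Wait'') is essentially correct and matches the paper's proof. The problem is that the statement contains a typo: the conclusion should read $(\F_{h+1}/\G)^* \cong \E_h^*$, not $\E_{h-1}^*$. This is what the paper's own proof actually establishes, and it is also what is used in Proposition~\ref{prop:h+1-II}, where one needs the destabilizing quotient to be (dual to) a member of $\UU_X(h)$, not $\UU_X(h-1)$.

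The paper's argument is exactly the case analysis you set up: with $\K = \ker(\G \to \E_h)$ and $\Q = \im(\G \to \E_h)$, one treats the cases $\rk \Q = 0$, $\rk \K = 0$, and both ranks positive. Stability of $\E'_1$ and $\E_h$ together with your weighted--average slope computation eliminates the mixed case; the case $\rk \K = 0$ forces $\G^{**} \cong \E_h$ and hence a splitting of the non--split extension (your case (d)); and the case $\rk \Q = 0$ gives the desired conclusion (your case (c)). The paper is slightly more careful than your sketch in two places. First, it does \emph{not} saturate $\G$ at the outset, but instead shows via a $c_1$--computation that the relevant cokernels are supported in codimension $\geq 2$, whence the isomorphisms hold at the level of duals (which is all the statement claims). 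Second, in case (d) it makes explicit that the inclusion $\G \hookrightarrow \F_{h+1}$ extends to $\G^{**} \cong \E_h \to \F_{h+1}$, producing the forbidden splitting.

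Everything after your ``Wait'' is chasing the typo and should be discarded. No inductive description of $\E_h$, no genericity or semicontinuity argument, and none of the $\Ext$--vanishings of Lemma~\ref{lemma:indu} enter the proof; the lemma follows directly from slope--stability of $\E'_1$ and $\E_h$ and non--splitness of \eqref{eq:estensione}. In particular, the hypothesis that $\F_{h+1}$ be \emph{general} in $\UU_X(h+1)^{\rm ext}$ is not actually used.
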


\begin{proof}
  We note that \eqref{eq:estensione} yields
  \[ \mu(\F_{h+1})=\mu(\E'_1)=\mu(\E_h)=3(g-1),\]
  where $\mu$ as usual denotes the {\it slope} of a sheaf with respect to $\O_X(1)$. 
Assume that $\G$ is a destabilizing subsheaf of $\F_{h+1}$, that is  $0<\rk(\G) < \rk(\F_{h+1})=2(h+1)$ and $\mu(\G) \geq 3(g-1)$. Define
  \[ \Q:=\im\{\G \subset \F_{h+1} \to \E_h\} \; \; \mbox{and} \; \; \K:=\ker\{\G \to \Q\}.\]
  Then we may put \eqref{eq:estensione} into a commutative diagram with exact rows and columns:
  \begin{equation} \label{eq:estest}
    \xymatrix{ & 0 \ar[d] & 0 \ar[d] & 0 \ar[d] & \\
  0 \ar[r]   & \K \ar[d] \ar[r] & \G \ar[r] \ar[d] & \Q \ar[r] \ar[d] & 0 \\    
  0 \ar[r]   & \E'_1 \ar[r] \ar[d] & \F_{h+1} \ar[r] \ar[d] & \E_h \ar[d] \ar[r] & 0 \\
  0 \ar[r]   & \K' \ar[r] \ar[d] & \F_{h+1}/\G \ar[r] \ar[d] & \Q' \ar[r] \ar[d] & 0 \\
  & 0  & 0  & 0  &
}
\end{equation}
defining $\K'$ and $\Q'$. 

Assume that $\rk(\Q)=0$. Since $\E_h$ is torsion-free, we must have $\Q =0$, whence $\K \cong \G$. Since $\mu(\K) =\mu(\G) \geq 3(g-1)=\mu(\E'_1)$ and $\E'_1$ is slope--stable by assumption, we must have
$\rk(\K)=\rk(\E'_1)=2$. It follows that $\rk(\K')=0$.  Since
\[
  c_1(\K) =  c_1(\E'_1)-c_1(\K')=[\O_X(3)]-D',\]
where $D'$ is an effective divisor supported on the codimension--one locus of the support of $\K'$, we have 
\[ 3(g-1) \leq \mu(\K)=\frac{\left(\O_X(3)-D'\right) \cdot \O_X(1)^2}{2}= 3(g-1)-\frac{D' \cdot \O_X(1)^2}{2}.\]
  Hence $D'=0$, which means that $\K'$ is supported in codimension at least two.
  Thus, $\Shext^i(\K',\O_X)=0$ for $i \leq 1$, and it follows that
  $\G^* \cong \K^* \cong {\E_1'}^*$, as well as $\left(\F_{h+1}/\G\right)^* \cong {\Q'}^* \cong \E_h^*$,
as desired.

  Assume that $\rk(\K)=0$. Since $\E'_1$ is locally free, we must have $\K=0$, whence $\Q \cong \G$. Since $\mu(\Q) =\mu(\G) \geq 3(g-1)=\mu(\E_h)$ and $\E_h$ is slope--stable by assumption, we must have
$\rk(\Q)=\rk(\E_h)=2h$. It follows that $\rk(\Q')=0$. Since
\[
  c_1(\Q) =  c_1(\E_h)-c_1(\Q')=[\O_X(3h)]-D'',\]
where $D''$ is an effective divisor supported on the codimension--one locus of the support of $\Q'$, we have 
\[ 3(g-1) \leq \mu(\Q)=\frac{\left(\O_X(3h)-D''\right) \cdot \O_X(1)^2}{2h}= 3(g-1)-\frac{D'' \cdot \O_X(1)^2}{2}.\]
  Hence $D''=0$, which means that $\Q'$ is supported in codimension at least two.
  Thus, $\Shext^i(\Q',\O_X)=0$ for $i \leq 1$, and it follows that
  $\G^* \cong \Q^* \cong {\E_h}^*$. Hence, $\G^{**} \cong \E_h$, and \eqref{eq:estest} induces 
\[
\begin{tikzcd}
& & \G^{**} \arrow{d}  \isoarrow{rd} &  &  \\    
     0 \arrow{r}   & \E'_1 \arrow{r}  & \F_{h+1} \arrow{r}  & \E_h  \arrow{r} & 0,
\end{tikzcd}
\]
which gives a  splitting of \eqref{eq:estensione}, a contradiction.

We are therefore left with the case where $\rk(\Q)>0$ and $\rk(\K)>0$. 
Since $\E_1'$ and $\E_h$ are stable of slope $3(g-1)$ by hypothesis, we have
\begin{eqnarray}
  \label{eq:app9} \mu(\K) \leq 3(g-1), \; \; \mbox{with equality only if} \; \; \rk(\K)=\rk(\E'_1), \\
  \label{eq:app10} \mu(\Q) \leq 3(g-1), \; \; \mbox{with equality only if} \; \; \rk(\Q)=\rk(\E_h).
\end{eqnarray}
If equality holds in both \eqref{eq:app9}-\eqref{eq:app10}, we would have
\[ \rk (\G)=\rk(\K)+\rk(\Q)=\rk(\E'_1)+\rk(\E_h)=\rk(\F_{h+1}),\]
a contradiction to $\G \subset \F_{h+1}$ being a destabilizing sheaf. Hence, at least one of the inequalities in \eqref{eq:app9}-\eqref{eq:app10} must be strict, yielding
\begin{eqnarray*}
  3(g-1) &  \leq  & \mu(\G)=\frac{c_1(\G) \cdot \O_X(1)^2}{\rk(\G)} 
         =  \frac{\left(c_1(\K)+c_1(\Q)\right) \cdot \O_X(1)^2}{\rk(\K)+\rk(\Q)} \\
  & = & \frac{\mu(\K)\rk(\K)+\mu(\Q)\rk(\Q)}{\rk(\K)+\rk(\Q)}  <  \frac{3(g-1)\left(\rk(\K)+\rk(\Q)\right)}{\rk(\K)+\rk(\Q)}= 3(g-1),
\end{eqnarray*}
 a contradiction. 
\end{proof}

\begin{proposition} \label{prop:h+1-II}
  For all $h \in \ZZ^+$ the scheme $\mathfrak{\U}(h)$ is reduced of dimension
 $h^2(g+3)+1$ and its general member is slope--stable. 
\end{proposition}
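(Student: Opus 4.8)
The plan is to prove Proposition \ref{prop:h+1-II} by induction on $h$, the base case $h=1$ being exactly the content of Corollary \ref{cor:exind1rk2} (slope--stability of the rank--two Ulrich bundles is known, and $\dim(\UU_X(1))=g+4=1^2(g+3)+1$ with the moduli space reduced there). So assume the statement holds for all integers $\leq h$; in particular $\E'_1 \in \UU_X(1)$ and $\E_h \in \UU_X(h)$ are slope--stable for general choices, so Lemma \ref{lemma:uniquedest1} applies. The first step is to prove that the general member $\F_{h+1}$ of $\UU_X(h+1)^{\rm ext}$ is slope--stable. Suppose not; then a general such $\F_{h+1}$ has a destabilizing subsheaf $\G$, and by Lemma \ref{lemma:uniquedest1} we get $\G^* \cong {\E'_1}^*$ and $(\F_{h+1}/\G)^* \cong \E_h^*$, so (after passing to reflexive hulls, using that Ulrich bundles are locally free and that $\E'_1$, $\E_h$ are locally free) $\F_{h+1}$ is an extension of $\E_h$ by $\E'_1$ in which $\G^{**} \cong \E'_1$ splits off the sub-line... more precisely, the inclusion $\G \hookrightarrow \F_{h+1}$ reflexivizes to $\E'_1 \hookrightarrow \F_{h+1}$ and one checks this realizes the \emph{same} sub-object as in \eqref{eq:estensione}; the subtlety is that a priori $\G$ could give a \emph{different} copy of $\E'_1$ inside $\F_{h+1}$. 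To rule this out I would argue by a dimension count: the bundles in $\UU_X(h+1)^{\rm ext}$ admitting two distinct sub-bundles isomorphic to members of $\UU_X(1)$ form a proper (indeed lower-dimensional) sublocus, because varying the second sub-bundle would force $\E_h$ itself to be an extension of lower-rank Ulrich bundles with a splitting, which is ruled out by the non-split hypothesis built into the definition of $\UU_X(h)^{\rm ext}$ and by induction (generic stability of $\E_h$). Hence the general $\F_{h+1}$ has a \emph{unique} such destabilizing sub-bundle, equal to the one in \eqref{eq:estensione}, contradicting that \eqref{eq:estensione} is non--split. Therefore $\F_{h+1}$, and hence the general member of $\UU_X(h+1)$, is slope--stable.

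The second step is the dimension and smoothness count. Since the general member $\E_{h+1}$ of $\UU_X(h+1)$ is slope--stable (just proved) and Ulrich (Proposition \ref{prop:h+1}), it is simple, so $h^0(\E_{h+1} \* \E_{h+1}^*)=1$; combined with $h^2(\E_{h+1}\*\E_{h+1}^*)=h^3(\E_{h+1}\*\E_{h+1}^*)=0$ from Lemma \ref{lemma:indu}(iii) (valid by the inductive hypothesis $\UU_X(k)\neq\emptyset$ for $k\leq h+1$, which Proposition \ref{prop:h+1} supplies) and $\chi(\E_{h+1}\*\E_{h+1}^*)=-(h+1)^2(g+3)$ from Lemma \ref{lemma:indu}(iv), we obtain
\[
h^1(\E_{h+1}\*\E_{h+1}^*) = -\chi(\E_{h+1}\*\E_{h+1}^*) + h^0(\E_{h+1}\*\E_{h+1}^*) = (h+1)^2(g+3)+1.
\]
By standard deformation theory of sheaves (e.g. \cite[Prop. 2.10]{ch}), the moduli space $\UU_X(h+1)$ has tangent space of dimension $h^1(\E_{h+1}\*\E_{h+1}^*)=(h+1)^2(g+3)+1$ at $[\E_{h+1}]$ and obstruction space contained in $h^2(\E_{h+1}\*\E_{h+1}^*)=0$, hence is \emph{smooth} of dimension $(h+1)^2(g+3)+1$ at its general point; in particular it is reduced there. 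This completes the induction.

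The main obstacle I anticipate is the uniqueness argument in Step 1 --- i.e. showing that the general non-split extension $\F_{h+1}$ cannot acquire a \emph{second}, different sub-bundle isomorphic to a rank--two Ulrich bundle, which is what would produce a destabilization not coming from \eqref{eq:estensione}. The clean way is to observe that, by Lemma \ref{lemma:uniquedest1}, any destabilizing subsheaf $\G$ of $\F_{h+1}$ reflexivizes to a sub-bundle isomorphic to $\E'_1$ with quotient isomorphic to $\E_h$; composing $\G^{**}\cong\E'_1 \hookrightarrow \F_{h+1}$ with $\F_{h+1}\twoheadrightarrow \E_h$ gives a map $\E'_1 \to \E_h$. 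If this map is zero, $\G^{**}$ \emph{is} the sub-bundle of \eqref{eq:estensione} and \eqref{eq:estensione} splits, absurd; if it is nonzero, then since $\E'_1$ and $\E_h$ are both stable of the same slope, it is an isomorphism, forcing $h=1$ and $\E'_1\cong\E_h$, which is excluded by the standing hypothesis $\E'_1\not\cong\E_h$ when $h=1$. This disposes of the obstacle cleanly and, pleasingly, shows no genericity of $\F_{h+1}$ within $\UU_X(h+1)^{\rm ext}$ beyond stability of $\E'_1,\E_h$ is actually needed.
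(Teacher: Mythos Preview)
Your Step 1 cannot work as stated: no member of $\UU_X(h+1)^{\rm ext}$ is slope--stable. Indeed, by definition each $\F_{h+1}$ sits in a non--split extension \eqref{eq:estensione} with $\mu(\E'_1)=\mu(\F_{h+1})=3(g-1)$, so the sub--bundle $\E'_1\subset\F_{h+1}$ itself is a destabilizing subsheaf. Your concluding argument actually confirms this rather than contradicting it: when the composition $\G^{**}\cong\E'_1\hookrightarrow\F_{h+1}\twoheadrightarrow\E_h$ is zero you conclude that $\G^{**}$ coincides with the sub--bundle in \eqref{eq:estensione}, but this does \emph{not} force \eqref{eq:estensione} to split---it merely identifies the destabilizing subsheaf with the known one. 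So your contradiction never arrives, and what you have really shown (together with the nonzero case) is just that $\E'_1$ is, up to isomorphism, the \emph{unique} maximal destabilizing sub--bundle of $\F_{h+1}$, which is a far cry from stability. Since Step 2 invokes stability to get simplicity, the whole argument collapses.

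The paper's proof runs in the opposite order and avoids this trap. One first observes that a non--split extension of two non--isomorphic slope--stable bundles of the same slope is \emph{simple} (this is the standard fact \cite[Lemma 4.2]{ch}); simplicity, together with Lemma \ref{lemma:indu}(iii)--(iv), already gives smoothness and $\dim(\UU_X(h+1))=(h+1)^2(g+3)+1$. Only then does one address stability, and crucially for the \emph{general} member $\E_{h+1}$ of $\UU_X(h+1)$ rather than of $\UU_X(h+1)^{\rm ext}$: assuming $\E_{h+1}$ is unstable, one degenerates it in a one--parameter family to some $\F_{h+1}\in\UU_X(h+1)^{\rm ext}$, takes the flat limit of a saturated destabilizing sequence, and applies Lemma \ref{lemma:uniquedest1} to the limit. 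This forces the destabilizing sub- and quotient sheaves of $\E_{h+1}$ to be deformations of members of $\UU_X(1)$ and $\UU_X(h)$, so that $[\E_{h+1}]\in\UU_X(h+1)^{\rm ext}$ and hence $\UU_X(h+1)^{\rm ext}=\UU_X(h+1)$. But a direct parameter count using $h^0(\E'_1\otimes\E_h^*)=0$ (from stability and equal slope) gives $\dim(\UU_X(h+1)^{\rm ext})\leq (h^2+h+1)(g+3)+1<(h+1)^2(g+3)+1=\dim(\UU_X(h+1))$, a contradiction.
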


\begin{proof}
 We prove this by induction on $h$, the case $h=1$ again being satisfied by the choice of $\UU_X(1)$. Assuming that we have proved the lemma for all integers $i\leq h$, we prove it for $h+1$.

The slope of the members of  $\UU_X(1)$ and $\UU_X(h)$ are
both equal to $3(g-1)$. It follows from a standard computation, see, e.g., \cite[Lemma 4.2]{ch}, that the general member $\UU_X(h+1)^{\rm ext}$ is simple. Hence also the general member $\E_{h+1}$ of $\UU_X(h+1)$ is simple, and it also satisfies $h^j(\E_{h+1} \* \E_{h+1}^*)=0$ for $j=2,3$ by Lemma \ref{lemma:indu}(iii).
 Therefore  
$\UU_X(h+1)$ is smooth at $[\E_{h+1}]$ (see, e.g., \cite[Prop. 2.10]{ch})
with
\begin{eqnarray} \label{eq:dimUh+1}
  \dim( \UU_X(h+1))&=&h^1(\E_{h+1} \* \E_{h+1}^*)=-\chi(\E_{h+1} \* \E_{h+1}^*)+h^0(\E_{h+1} \* \E_{h+1}^*)\\
\nonumber  &=&(h+1)^2(g+3)+1,
\end{eqnarray}
using the facts that $h^0(\E_{h+1} \* \E_{h+1}^*)=1$ as $\E_{h+1}$ is simple, and that
$\chi(\E_{h+1} \* \E_{h+1}^*)=-(h+1)^2(g+3)$ by Lemma \ref{lemma:indu}(iv). This proves that $\UU_X(h+1)$ is reduced of the claimed dimension. 

Finally, we prove that $\E_{h+1}$ is slope--stable.
Assume, to get a contradiction, that it is not.
 Then we may find a one-parameter family of bundles $\{\E^{(t)}\}$ over the disc $\DD$ such that $\E^{(t)}$ is a general member of $\UU_X(h+1)$ for $t \neq 0$ and $\E^{(0)}$ lies in $\UU_X(h+1)^{\rm ext}$, and such that we have a destabilizing sequence
\begin{equation} \label{eq:destat1} \xymatrix{
    0 \ar[r] & \G^{(t)} \ar[r] & \E^{(t)} \ar[r] & \F^{(t)} \ar[r] & 0}
  \end{equation}
  for $t \neq 0$, which we can take to be saturated, that is, such that $\F^{(t)}$ is torsion free, whence so that $\F^{(t)}$ and $\G^{(t)}$ are (Ulrich) vector bundles  (see \cite[Thm. 2.9]{ch} or \cite[(3.2)]{be2}).
  The limit of $\PP(\F^{(t)}) \subset \PP(\E^{(t)})$ defines a subvariety of $\PP(\E^{(0)})$ of the same dimension as $\PP(\F^{(t)})$, whence a coherent sheaf $\F^{(0)}$ of rank $\rk(\F^{(t)})$ with a surjection $\E^{(0)} \to \F^{(0)}$. Denoting by $\G^{(0)}$ its kernel, we have
$\rk(\G^{(0)})=\rk(\G^{(t)})$ and $c_1(\G^{(0)})=c_1(\G^{(t)})$. Hence, \eqref{eq:destat1} specializes to a destabilizing sequence for $t=0$. 
Lemma \ref{lemma:uniquedest1} yields  that ${\G^{(0)}}^*$ (resp.,
${\F^{(0)}}^*$) 
is the dual of a member of $\UU_X(1)$ (resp., of $\UU_X(h)$)
It follows that
${\G^{(t)}}^*$ (resp., ${\F^{(0)}}^*$)
is a deformation of the dual of a member of $\UU_X(1)$ (resp., $\UU_X(h)$), whence that $\G^{(t)}$ (resp., $\F^{(t)}$) is a deformation of a member of $\UU_X(1)$ (resp., $\UU_X(h)$), as both are locally free. 
It follows that 
$[\E_{h+1}] \in \UU_X(h+1)^{\rm ext}$. Thus,
\begin{equation} \label{eq:sonuguali}
  \UU_X(h+1)^{\rm ext}=\UU_X(h+1).
\end{equation}

On the other hand we have
\begin{equation} \label{eq:dimext2}
  \dim (\UU_X(h+1)^{\rm ext}) \leq \dim (\PP(\Ext^1(\E_{h},\E'_1))) +\dim (\UU_X(h)) +\dim (\UU_X(1)),
\end{equation}
for general 
  $[\E_{h}] \in \UU_X(h)$
and $[\E'_1]\in \UU_X(1)$. As $\E_{h}$ and  $\E'_1$ are slope--stable by induction, and of the same slope, we have $h^0(\E'_1 \* \E_h^*)=0$. Lemma \ref{lemma:indu}(i)--(ii) thus yields 
\[  h^1(\E'_1 \* \E_{h}^*)  =  -\chi(\E'_1 \* \E_{h}^*)+h^0(\E'_1 \* \E_{h}^*)+h^2(\E'_1 \* \E_{h}^*)-h^3(\E'_1 \* \E_{h}^*) 
=h(g+3).
\]
Hence, by \eqref{eq:dimext2} and \eqref{eq:dimUh+1} we have
\begin{eqnarray*}
  \dim (\UU_X(h+1)^{\rm ext}) & \leq & h(g+3) -1 +\left[h^2(g+3)+1\right]+\left[g+4\right] \\
  & = & (h^2+h+1)(g+3)+1  <  (h+1)^2(g+3)+1=\dim \UU_X(h+1),
\end{eqnarray*}
a contradiction to \eqref{eq:sonuguali}.
\end{proof}

This concludes the proof of Theorem \ref{thm:prFano}.

\section{ An application to curves with certain theta--characteristics}\label{sec:final}

In this section we make some remarks about rank 2 Ulrich bundles on prime Fano threefolds $X$ of genus $g$ and degree $2g-2$ in $\PP^{g+1}$. 
 By  \cite [Ex.  3.8]{CFK} any such $X$ carries a rank 2  Ulrich bundle $\E$ if and only if $\det(\E)=\O_X(3)$ and there is a smooth, irreducible, non--degenerate, linearly and quadratically normal curve $C$ in $X$ such that
$$
\deg(C)=5g-1, \quad g(C)=5g, \quad \omega_C=\O_C(2),
$$
whence $\O_C(1)$ is a {\em theta--characteristic} on $C$ with $h^0(\O_C(1))=g+2$. It is easy to check that such curves are automatically projectively Cohen--Macaulay.

More precisely, given a rank 2  Ulrich bundle $\E$ on $X$ and a general section $s\in H^0(\E)$, the curve $C$ is the zero locus of $s$, and conversely, if there is such a curve $C\subset X$, there is a rank 2  Ulrich bundle $\E$ on $X$ and a non--zero section $s\in H^0(\E)$, such that $C$ is the zero locus of $s$.

  Let $\mathfrak F_g$ be the Hilbert scheme  parametrizing (smooth) prime Fano threefolds $X$ of degree $2g-2$ in $\PP^{g+1}$, and $\mathfrak C_g$ be the  union of the components of the Hilbert scheme 
 containing points parametrizing curves $C$ in $\PP^{g+1}$ as above. 
Consider the incidence correspondence 
$$
I_g=\{(X,C)\in  \mathfrak F_g  \times \mathfrak C_g: C \subset X\}.
$$ 
Let 
$$
\phi_g: I_g\longrightarrow \mathfrak C_g
$$
be the projection to the second factor and consider the {\em moduli map}
$$
\mu_g: \im(\phi_g) \longrightarrow \mathcal M_{5g} 
$$where, as usual, $ \mathcal M_{p} $ denotes the moduli space of smooth 
curves of genus $p$. The image of $\mu_g$ is contained in the locus $ \mathcal M^{g+1}_{5g} $ of curves $C$ having a theta--characteristic 
$\theta$ with $\dim(|\theta|)=g+1$. 
According to a result by J. Harris (see \cite [Cor.\;1.11]{Harr}),  
any irreducible component of $ \mathcal M^{g+1}_{5g}$  in $ \mathcal M_{5g}$
has  \emph{expected codimension} 
\begin{equation}\label{eq:cod}
\frac {(g+2)(g+1)}2
\end{equation} 
and the actual codimension is at most this.

We want to discuss in this section the following question: is $ \mu_g $ dominant onto an irreducible component of $ \mathcal M^{g+1}_{5g} $? 

First we prove the following:

\begin{proposition}\label{prop:quest}  
For $g\geq 6$ the image of $ \mu_g $ has codimension larger than \eqref{eq:cod}  and therefore $ \mu_g $ is not dominant onto  
any  irreducible component of $ \mathcal M^{g+1}_{5g} $, or equivalently, the general curve in  any   component of 
$ \mathcal M^{g+1}_{5g}$ does not sit on a prime Fano threefold $X$ of  degree $2g-2$ in $\PP^{g+1}$ as a projectively Cohen--Macaulay semicanonical curve. 
\end{proposition}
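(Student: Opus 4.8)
Our approach to Proposition~\ref{prop:quest} is a dimension count. If $\mu_g$ were dominant onto some irreducible component $Z$ of $\mathcal M^{g+1}_{5g}$, then by Harris's theorem \cite[Cor.\,1.11]{Harr} the codimension of $Z$ in $\mathcal M_{5g}$ would be at most the expected value \eqref{eq:cod}, so that
\[
\dim(\im(\mu_g))\geq\dim Z\geq 15g-3-\frac{(g+2)(g+1)}{2}.
\]
The plan is to contradict this for $g\geq 6$ by bounding $\dim(\im(\mu_g))$ from above.

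The first step is to bound the fibres of the projection $\psi_g\colon I_g\to\mathfrak F_g$ onto the first factor. Over a point $[X]$ the fibre is the locus in the Hilbert scheme of $X$ parametrizing the curves $C$ of the Proposition, and by the correspondence recalled from \cite[Ex.\,3.8]{CFK} this fibre is, up to the choice of a section modulo scalars, the moduli space $\MM$ of rank--two Ulrich bundles $\E$ on $X$ with $\det(\E)=\O_X(3)$. Every such $\E$ is Ulrich, hence stable, hence simple, so $h^0(\E\otimes\E^*)=1$; moreover $\chi(\E\otimes\E^*)=-(g+3)$ is topological, $h^3(\E\otimes\E^*)=h^0(\E\otimes\E^*(-1))=\Hom(\E,\E(-1))=0$ by stability and slopes, and $h^2(\E\otimes\E^*)=h^1(\E\otimes\E^*(-1))=0$ by Serre duality and an Ulrich vanishing. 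Hence $\dim_{[\E]}\MM=h^1(\E\otimes\E^*)=g+4$, while $h^0(\E)=\rk(\E)\deg(X)=4g-4$ because $\E$ is Ulrich. So each fibre of $\psi_g$ has dimension $(g+4)+(4g-4)-1=5g-1$, whence $\dim I_g\leq \dim\mathfrak F_g+5g-1$; a Riemann--Roch check gives $\chi(\N_{C/X})=5g-1$, consistent with this.

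The map $\mu_g$ only records the abstract curve, hence is constant on the orbits of the natural $PGL_{g+2}$--action on $I_g$; since a curve of genus $5g\geq 30$ has finite automorphism group and the relevant theta--characteristics on a general fibre are finite in number, the general fibre of $\mu_g$ is a finite union of $PGL_{g+2}$--orbits, each of dimension $(g+2)^2-1$. Therefore
\[
\dim(\im(\mu_g))\leq \dim I_g-(g+2)^2+1\leq \dim\mathfrak F_g+5g-(g+2)^2 .
\]
Comparing with the lower bound above, the Proposition reduces to the numerical inequality $\dim\mathfrak F_g<\tfrac{g(g+25)}{2}$, equivalently $h^1(X,T_X)<\tfrac{-g^2+17g-6}{2}$, since $\mathfrak F_g$ is irreducible and smooth of dimension $h^1(X,T_X)+(g+2)^2-1$ at a general point (using $H^2(X,T_X)=0$ always and $H^0(X,T_X)=0$ for general $X$). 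This last inequality is then verified case by case for $g\in\{6,\dots,10,12\}$ from the explicit descriptions of prime Fano threefolds (see \cite{IP}); one finds it holds strictly for these genera, becomes an equality for $g=5$, and fails for $g\in\{3,4\}$ — which is precisely why the small genera $3\leq g\leq 5$ are treated separately.

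The main obstacle is the fibre bound $\dim\MM\leq g+4$: one must guarantee it for \emph{every} component of the moduli of rank--two Ulrich bundles on $X$ and for every $[X]\in\mathfrak F_g$, not merely for the component produced in Corollary~\ref{cor:exind1rk2} or for a general $X$. This is exactly what forces the use of the general vanishing $h^2(\E\otimes\E^*)=0$ (the remaining cohomological bookkeeping being automatic as indicated), and correspondingly the estimate must be applied to each component of $\mathfrak C_g$ meeting $\im(\phi_g)$. The other ingredients — irreducibility and the dimension of $\mathfrak F_g$, finiteness of $\Aut(C)$ and of the theta--characteristics involved, and Harris's codimension bound — are classical or already available.
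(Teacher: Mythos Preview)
Your argument follows the same route as the paper's: bound the fibres of $\psi_g\colon I_g\to\mathfrak F_g$ using that rank--two Ulrich bundles have moduli of dimension $g+4$ and $h^0(\E)=4(g-1)$, obtain $\dim I_g=\dim\mathfrak F_g+5g-1$, subtract $\dim\mathrm{PGL}(g+2,\CC)$, and compare with Harris's bound. The paper phrases the last step in terms of the number of moduli $\dim\mathfrak F_g-\dim\mathrm{PGL}(g+2,\CC)$ of prime Fano threefolds (citing \cite{FG}) rather than $h^1(X,T_X)$, but the computations are equivalent.

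The one substantive addition you attempt is to justify $\dim\MM\leq g+4$ for \emph{every} component of the moduli of rank--two Ulrich bundles, via $h^2(\E\otimes\E^*)=h^1(\E\otimes\E^*(-1))=0$ ``by Serre duality and an Ulrich vanishing''. This step is not valid as written: $\E\otimes\E^*$ is not an Ulrich bundle (it contains $\O_X$ as a direct summand, and $\O_X$ is not Ulrich), and no standard Ulrich vanishing applies to such a tensor product. You are right that this is the crux --- one has $h^2(\E\otimes\E^*)=h^1(\N_{C/X})$ for $C$ the zero locus of a section (same computation as in Lemma~\ref{lemma:h3h2}(vi)), but that vanishing is not automatic. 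The paper does not address this point either; it simply invokes the specific component $\UU_X(1)$ produced in Theorem~\ref{thm:prFano}, so your proposal is not worse off than the paper here --- but you should not claim the vanishing follows from Ulrichness alone.
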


\begin{proof} Let us compute the dimension of $I_g$.  We claim that the general  fiber of  the first projection $\psi_g: I_g\to  \mathfrak F_g$ has dimension $5g-1$. Indeed, given $X$ in $ \mathfrak F_g $, to give $C\subset X$ is equivalent to giving a bundle $\E$ in $\UU_X(1)$, as in \S\;\ref{sec:pf3}, and a non--zero section $s\in H^0(\E)$ up to a constant, i.e., an element in $\PP(H^0(\E))$. One has $\dim(\UU_X(1))=g+4$ by Theorem \ref {thm:prFano}, and $h^0(\E)=4(g-1)$ by \cite [(3.1)] {be2}. Hence the dimension of the fibers of $\psi_g$ is
$$
g+4+4(g-1)-1=5g-1
$$
as wanted. Hence 
\begin{equation}\label{eq:yto}
\dim(I_g)=\dim ( \mathfrak F_g )+5g-1 
\end{equation}
and 
$$
\dim(	\im(\phi_g))\leq \dim(I_g)=\dim ( \mathfrak F_g )+5g-1
$$
so that 
$$
\dim(\im(\mu_g))\leq\dim(	\im(\phi_g))-\dim({\rm PGL}(g+2,\mathbb C))\leq 
\dim ( \mathfrak F_g )+5g-1-\dim({\rm PGL}(g+2,\mathbb C)).
$$
Now $\dim ( \mathfrak F_g )-\dim({\rm PGL}(g+2,\mathbb C))$ is the number of moduli of the prime Fano threefolds of degree $2g-2$ in $\PP^{g+1}$, it is a well--known number (see \cite{FG}) and it is an easy computation to check that for $g\geq 6$ one has 
$$\dim ( \mathfrak F_g )+5g-1-\dim({\rm PGL}(g+2,\mathbb C))<\dim( \mathcal M_{5g} )-\frac {(g+2)(g+1)}2$$
hence 
$$\dim(\im(\mu_g))<\dim( \mathcal M_{5g} )-\frac {(g+2)(g+1)}2$$
which  proves the assertion by Harris' theorem mentioned above. 
\end{proof}

Next we complete the picture by proving that: 

\begin{proposition}\label{prop:quest1}  
For $3\leq g\leq 5$ the map $ \mu_g $ is dominant onto  a union of irreducible components  of 
$ \mathcal M^{g+1}_{5g} $,  each of which is uniruled and  of the expected codimension \eqref {eq:cod}.
\end{proposition}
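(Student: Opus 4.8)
The strategy is to show that for $3 \leq g \leq 5$, the inequality that fails in Proposition \ref{prop:quest} for $g \geq 6$ now goes the other way, and in fact becomes an equality after accounting for the generic fibres of the moduli map. Concretely, I would first redo the dimension count of $I_g$ exactly as in the proof of Proposition \ref{prop:quest}: the general fibre of $\psi_g: I_g \to \mathfrak{F}_g$ has dimension $g + 4 + 4(g-1) - 1 = 5g-1$ by Theorem \ref{thm:prFano} and the fact that $h^0(\E) = 4(g-1)$ for a rank--two Ulrich bundle, so $\dim(I_g) = \dim(\mathfrak{F}_g) + 5g-1$. Passing to $\mathcal{M}_{5g}$ we subtract $\dim \mathrm{PGL}(g+2,\mathbb{C})$, and using the known number of moduli $\dim(\mathfrak{F}_g) - \dim \mathrm{PGL}(g+2,\mathbb{C})$ of prime Fano threefolds (from \cite{FG}), one checks by direct computation that for $g \in \{3,4,5\}$
\[
\dim(\mathfrak{F}_g) + 5g - 1 - \dim \mathrm{PGL}(g+2,\mathbb{C}) \;\geq\; \dim(\mathcal{M}_{5g}) - \frac{(g+2)(g+1)}{2},
\]
with equality. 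This shows $\dim(\im(\mu_g))$ is \emph{at least} the expected dimension of a component of $\mathcal{M}^{g+1}_{5g}$, provided $\mu_g$ has generically finite (or at least controlled) fibres — so the first real task is to bound the fibre dimension of $\mu_g$ from above. Here I would argue that a general $[C] \in \im(\phi_g)$ lies on only finitely many (up to projectivity) prime Fano threefolds $X$: indeed $C \subset X$ forces $X$ to lie in the (finite-dimensional) space of cubics/quadrics vanishing on the appropriate locus determined by $C$ and its semicanonical embedding, and a parameter count using $h^0(\I_{C/\PP^{g+1}}(k))$ for the relevant $k$ shows this is rigid for general $C$; alternatively, one invokes that the Ulrich bundle $\E$ is recovered from $(X,C)$ and, conversely, the pair $(C, \O_C(1))$ with its $g+2$ sections determines the ambient $\PP^{g+1}$ and then $X$ up to finitely many choices. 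Combining the lower bound on $\dim(\im \mu_g)$ with Harris' theorem (\cite[Cor.\ 1.11]{Harr}), which says every component of $\mathcal{M}^{g+1}_{5g}$ has codimension \emph{at most} $(g+2)(g+1)/2$, forces $\im(\mu_g)$ to be dense in a union of components of $\mathcal{M}^{g+1}_{5g}$, each of exactly the expected codimension \eqref{eq:cod}.

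It remains to prove uniruledness of these components. The natural source of rational curves is the threefold $X$ itself together with the variation of the section $s \in \PP(H^0(\E))$: fixing $X$ and $\E \in \UU_X(1)$, the family $\{Z(s) : [s] \in \PP(H^0(\E))\} \cong \PP^{4(g-1)-1}$ is a projective space of curves $C \subset X$, all mapping into $\mathcal{M}^{g+1}_{5g}$ under the moduli map. I would show that the moduli map restricted to this $\PP^{4(g-1)-1}$ is non-constant — equivalently, that a general such $C$ deforms nontrivially in moduli as $s$ varies, which follows because the curves $Z(s)$ are not all projectively equivalent (their linear spans / semicanonical data vary) and $\mathrm{PGL}(g+2,\mathbb{C})$ has dimension strictly less than $4(g-1)-1 + \dim \mathrm{Aut}$ considerations for $g \leq 5$. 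A cleaner route: since $\dim(\im \mu_g)$ equals the expected dimension and the fibres of $\phi_g$ over a point of $\mathfrak{C}_g$ through a fixed general $C$ on a fixed $X$ already contain a positive-dimensional linear system of sections, a general point of the image component is covered by the image of a (possibly reducible) rational curve obtained by moving $s$ in a line $\PP^1 \subset \PP(H^0(\E))$ through the given section; one checks this line maps to a rational curve in $\mathcal{M}_{5g}$, not to a point, by exhibiting two members $Z(s_0), Z(s_\infty)$ that are abstractly non-isomorphic (or more simply, by a first-order argument: the Kodaira–Spencer map of the family along a general line is nonzero because the characteristic subscheme $Z(s_0) \cap Z(s_\infty)$ moves, cf.\ the local analysis in Remark \ref{rem:local}). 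Running this through a general point of each image component yields the rational curve through that point, proving uniruledness.

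The main obstacle I anticipate is not the dimension count — that is a bookkeeping exercise with the tables in \cite{FG} — but rather establishing that $\mu_g$ has \emph{generically finite} fibres onto its image (so the lower dimension bound is sharp) and simultaneously that those fibres are \emph{positive-dimensional in the right direction} to produce the ruling. These pull slightly against each other: the fibre of $\phi_g$ over $[C]$ consists of the threefolds through $C$, which I claim is finite, whereas the ruling comes from moving $C$ inside a \emph{fixed} $X$, i.e.\ from the fibres of the other projection. So the careful point is to separate the two projections of $I_g$ cleanly and verify that (a) a general genus-$5g$ semicanonical curve of the relevant type lies on only finitely many Fano threefolds — this likely requires a case-by-case analysis for $g = 3, 4, 5$ using the explicit complete-intersection descriptions of $X$ recalled in Section \ref{sec:fano} — and (b) the linear system $\PP(H^0(\E))$ of sections genuinely varies the curve in moduli, which I would settle by the infinitesimal/Remark \ref{rem:local} argument above. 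Once both are in hand, the conclusion follows formally from Harris' theorem.
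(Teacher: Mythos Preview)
Your overall architecture is right and matches the paper's: compute $\dim(I_g)$, control the fibres of $\phi_g$ and of $\mu_g$, and compare with the expected codimension. But there is a genuine gap in the step where you invoke Harris' theorem. Harris' result says every component of $\mathcal M^{g+1}_{5g}$ has codimension \emph{at most} $(g+2)(g+1)/2$, i.e.\ dimension \emph{at least} the expected one. So even if you prove $\dim(\im(\mu_g))$ equals the expected dimension, this does \emph{not} force $\im(\mu_g)$ to be dense in a component: it could sit as a proper subvariety of a component of strictly larger dimension. Your sentence ``forces $\im(\mu_g)$ to be dense in a union of components'' is therefore unjustified.

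The paper closes this gap not via Harris' theorem but by an openness argument at the level of the Hilbert scheme $\mathfrak C_g$: it shows (claim (b) in the proof) that any component of $\im(\phi_g)$ is already a full component of the locus in $\mathfrak C_g$ of curves whose hyperplane bundle is a theta--characteristic. The point is that being projectively Cohen--Macaulay is open by semicontinuity, so a general deformation $C$ of a curve in $\im(\phi_g)$ within that locus still has $h^0(\I_{C/\PP^{g+1}}(2))$ of the right size (e.g.\ $3$ for $g=5$), and the explicit complete--intersection description of $X$ in \S\ref{sec:fano} then forces $C$ to lie on a prime Fano threefold, hence $[C]\in\im(\phi_g)$. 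You gesture at this kind of ideal--sheaf computation for the fibres of $\phi_g$, but you need it for the \emph{deformations} of $C$, not just for the fibre over $C$. Separately, you conflate two different fibre questions: generic finiteness of $\phi_g$ (curve on finitely many Fanos --- this is the paper's claim (a)) and generic finiteness of $\mu_g$ modulo ${\rm PGL}$ (which the paper handles, claim (c), via the finiteness of theta--characteristics on a fixed curve, not via the ambient threefold). Finally, your uniruledness argument is more laboured than necessary: once $\phi_g$ is birational and $\mu_g$ has generically finite fibres modulo ${\rm PGL}$, uniruledness of $\im(\mu_g)$ follows directly from uniruledness of $I_g$ (which is clear since $I_g$ fibres in projective spaces $\PP(H^0(\E))$), without needing the first--order analysis you propose.
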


\begin{proof} We prove the proposition only in case $g=5$. The cases $3\leq g\leq 4$, being similar, can be left to the reader.

One has $\dim ( \mathfrak F_5 )=75$ hence by \eqref {eq:yto} we have $\dim(I_5)=99$. Next we claim that:
\begin{itemize}
\item [(a)] $\phi_5$ is birational onto its image;
\item [(b)] any component of  
the image of $\phi_5$ is a component of the sublocus of $\mathfrak C_5$ parametrizing curves whose hyperplane section is a theta--characteristic;
\item [(c)] the  fibers  of the map $\mu_5$ are finite, modulo the action of ${\rm PGL}(7,\mathbb C)$.
\end{itemize}

To prove (a), note that, given $C$, the quadrics of $\PP^6$ cut out on it the complete canonical series, which has dimension $g(C)-1=24$. Since $\dim(|\O_{\PP^6}(2)|)=27$, one has $\dim (|\mathcal I_{C/\PP^6}(2)|)=2$, hence $C$ sits on a unique prime Fano threefold $X$ of degree $8$ in $\PP^6$. 

To prove (b), denote by $\mathfrak K$  any component  of the sublocus of $\mathfrak H_5$ parametrizing curves whose hyperplane section is a theta--characteristic 
 and which contains a component of $\im(\phi_5)$. If $C$ is a general element of $\mathfrak K$,  by semi--continuity  
$C$ is still projectively Cohen--Macaulay, hence one sees, as above, that $\dim (|\mathcal I_{C/\PP^6}(2)|)=2$, so $C$ sits on a prime Fano threefold 
$X$ of degree $8$ in $\PP^6$, thus $C$ in $ \im(\phi_5)$ as wanted. 

Finally, to prove (c), suppose that a  fiber   of $\mu_5$ has a positive dimensional component $\mathcal C$. Every element $C$ in $\mathcal C$ is endowed with its hyperplane section bundle $\O_C(1)$, which is a theta--characteristic  $\theta$ with $h^0 (\theta) = g+2$. Fix $C$  and take a general element $C'$ in $ \mathcal C$. Since $C$ and $C'$ are in the same fiber of $\mu_5$, they are isomorphic. The isomorphism that maps $C'$ to $C$ has to map the theta--characteristic $\O_{C'}(1)$ to a very ample theta--characteristic $\theta$ of $C$ with $h^0(\theta)=g+2$. Since $C$ does not have a continuous family of theta--characteristics, the theta--characteristic $\theta$ has to be independent of $C'$  in the fiber 
$\mathcal C$  which shows that all curves in $\mathcal C$ are projectively equivalent.

By (a) above  we have  $\dim(\im(\phi_5))=99$. By (c) one has 
$$\dim(\im(\mu_5))=99-\dim({\rm PGL}(7,\mathbb C))=51.$$
By (b),  any component of  $\im(\mu_5)$ is an irreducible component 
of $ \mathcal M_{25}^{6} $ of codimension
$$
\dim(\mathcal M_{25})-\dim (\im(\mu_5))=72-51=21,
$$
which is the expected codimension \eqref{eq:cod} for $g=5$, as wanted.

Finally,  each component of  $\im(\mu_5)$ is uniruled because so is $I_5$. 
\end{proof}

\end{document}